\theoremstyle{plain}
\newtheorem{theorem}{Theorem}[section]
\newtheorem{lemma}[theorem]{Lemma}
\newtheorem{proposition}[theorem]{Proposition}
\newtheorem{remark}[theorem]{Remark}
\newtheorem{definition}[theorem]{Definition}
\theoremstyle{definition}
\numberwithin{equation}{section}
\def\ds{\displaystyle}
\def\Om{\Omega}
\def\R{\mathbb{R}}
\def\Z{\mathbb{Z}}
\def\N{\mathbb{N}}
\def\dist{\textup{dist}}
\def\osc{\textup{osc}}
\def\H{\mathcal{H}}
\def\P{Per}
\def\E{\mathcal{E}}
\def\F{\mathcal{F}}
\def\M{\mathcal{M}}
\def\RT{\mathcal{R}_T}
\newcommand{\Symd}{\mathbb M^{d\times d}_{\rm sym}}
\newcommand{\Clo}{\mathcal C(\R^d)}
\newcommand{\Ope}{\mathcal A(\R^d)}
\newcommand{\f}{\varphi}
\newcommand{\e}{\varepsilon}
\newcommand{\hs}{\mathcal H}
\newcommand{\pro}{\mathcal P_{\hat p}}
\title
[A Non-Local Mean Curvature Flow]
{A Non-Local Mean Curvature Flow and its semi-implicit
time-discrete approximation}
\author[A. Chambolle]
{Antonin Chambolle}
\address[Antonin Chambolle]{CMAP, Ecole Polytechnique, CNRS, France}
\email[A. Chambolle]{antonin.chambolle@cmap.polytechnique.fr}
\author[M. Morini]
{Massimiliano Morini}
\address[Massimiliano Morini]{Dip.~di Matematica, Univ.~Parma, Italy}
\email[M. Morini]{massimiliano.morini@unipr.it}
\author[M. Ponsiglione]
{Marcello Ponsiglione}
\address[Marcello Ponsiglione]{Dip.~di Matematica,
Univ.~Roma-I ``La Sapienza'', Roma, Italy}
\email[M. Ponsiglione]{ponsigli@mat.uniroma1.it}
\keywords{nonlocal curvature flows, nonlocal geometric flows,
minimizing movements, viscosity solutions}
\begin{document}
\bibliographystyle{plain}
\begin{abstract}
We address in this paper the study of a geometric evolution,  corresponding to a  curvature which  is   non-local and singular at the origin.
The curvature represents the first variation of the  energy $\M_\rho(E)$ defined in \eqref{defmink0},
proposed in a recent work~\cite{BKLMP} as a variant of the standard perimeter  penalization   for the
denoising of nonsmooth curves. 
\par
To deal with such degeneracies, we first give  an abstract 
existence and uniqueness result for  viscosity solutions of
non-local degenerate Hamiltonians, satisfying suitable continuity assumption 
with respect to Kuratowsky convergence of the level sets.  
This abstract setting applies to an  approximated flow. Then,  by the method
of minimizing movements,  we also build an ``exact'' curvature flow, and we illustrate  some examples,  comparing the results with the standard mean curvature flow.  
\end{abstract}
%
%
\maketitle


\section{Introduction}

In a recent paper \cite{BKLMP}, the last two authors, together with
M.~Barchiesi, S.~H.~Kang and T.~Le, proposed a variational model
for (binary) image denoising, which was supposed to preserve small
scale details (or small oscillations of the boundary) while regularizing
the large scales. This model is a variant of the celebrated Mumford-Shah functional, 
where the perimeter term is replaced by  the following one
\begin{equation}\label{defmink0}
\M_\rho(E)\ =\ \frac{1}{2\rho} \left|\left\{x\in \R^d\,:\, \dist(x,E)\le \rho\,,
\dist(x,\R^d \setminus E)\le\rho\right\}\right|,
\end{equation}
defined for any $E\subseteq\R^d$, where $\rho>0$ acts as a scale selection
parameter. Here and throughout the paper, given $A\subset \R^d$ measurable, we denote by  $|A|$ its Lebesgue measure.  Notice that the energy is finite if and only if  $\partial E$ is compact.
The idea behind such a variant is that fluctuations of $\partial E$ at lengths
much smaller than $\rho>0$ will have very little influence on
the energy; on the other hand, it  behaves as the standard perimeter on much larger smooth boundaries,
and in a more complicated non-local way  on sets with fine microstructures on scales of order $\rho$. 

\begin{figure}[htb]
\centerline{
\includegraphics[width=3cm]{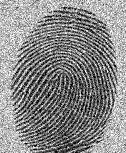}
\hspace{1cm} \includegraphics[width=3cm]{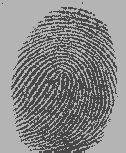}
}
\caption{An example from~\cite[Fig.~4.1]{BKLMP}: the fingerprint (left:
noisy, right: denoised).}
\label{fig:fing}
\end{figure}
\noindent
The sort of denoising which is obtained in~\cite{BKLMP} is
shown in Fig.~\ref{fig:fing}, where small oscillations (here
the stripes of the fingerprints) are almost untouched, while the noise
has mostly been removed.

In this paper, we try to investigate some mathematical analysis aspects of
this model.  More precisely, we want to study the geometric
evolution of curves and shapes by the gradient flow of the functional
proposed by these authors. 

To this purpose, we first extend our energy 
to $L^1$ functions, and express it in terms of a function depending on the oscillation of $u$ on balls of radius $\rho$, following the approach in \cite{CDarbon}. With this point of view, it turns out that  \eqref{defmink0}   is the restriction to characteristics functions  of a convex, l.s.c.~functional, satisfying a suitable  ``coarea formula''. 
Then, 
we introduce the ``curvature'' as the first variation
of this functional with respect to inner variations of the sets.
This curvature is not continuous and it is not  well defined for all
smooth sets. Therefore, in \eqref{varmr} we introduce a smoother version $\M^f$
of~\eqref{defmink0}, which roughly speaking consists in averaging $\M_r$ over  $r$, for  $r$ varying in a neighborhood of $\rho$.  The corresponding  curvature  is now well defined on smooth sets. 

After this preliminary analysis to define a proper notion of curvature,  we study the corresponding geometric flow. { Using a level set approach and} working in the framework of viscosity solutions, we define a mean curvature flow equation, which is
both non-local and singular. 
Indeed, our Hamiltonian $F(x, D u,D^2 u , K)$
depends in a non-local way on the level set $K$, and
behaves like a power ($d-1$) of the curvature tensor of { $\partial K$}
for vanishing sets, being thus singular in  dimension $d\ge 3$ {(see \eqref{cippi})}. 
To deal with such degeneracy we combine 
the  approach by Slep\v cev~\cite{S} to non-local Hamiltonians  with the approach by Ishii and Souganidis~\cite{IS} and
Goto~\cite{Goto}  to degenerate Hamiltonians. 
However, the  approach in \cite{S} is based on the assumption that the  Hamiltonian is continuous with respect to all its variables, 
in particular with respect to $L^1$ convergence of the sets. This is not the case of our Hamiltonian (and of any reasonable regularization of it).
Therefore, we build up a variant of the approach in \cite{S} that works for a general class of Hamiltonians satisfying suitable  continuity properties  with respect to the
Kuratowski convergence instead of $L^1$ convergence of sets.
We adapt the  notion in \cite{IS} of  viscosity
solutions for singular Hamiltonians to our non local setting, and we show a corresponding result of existence
and uniqueness. This result will apply to a general class of Hamiltonians, which does not include the Hamiltonian corresponding to our non-local curvature flow,  but only a {suitable} continuous approximation of it.

Finally, we study  the minimizing movements corresponding to the energy $\M^f$. We introduce  an implicit time-discretization of
the motion, and we  show that it converges, up to a subsequence,
to a solution of the level set equation in the viscosity sense. 
In this way we recover an existence result for viscosity solutions
also for the exact Hamiltonian corresponding to the first variation
of $\M^f$, yet without uniqueness. 
We mention that in a recent paper of Caffarelli and
Souganidis~\cite{CaffaSouga2010},
a similar strategy (based, this time, on a diffusion/thresholding 
time-discrete scheme) has been successfully implemented to build up
a non-local curvature flows associated to fractional diffusions.
Our time-discretization approach can be numerically implemented,
following the approach in \cite{CDarbon}: we show eventually in
Section~\ref{secnumer} a few examples which are
compared with the classical mean curvature flow, and seem to confirm a
slower smoothing of oscillatory boundaries. 

We mention the existence of a few interesting alternative approaches
to non-local evolutions. The recent papers of~\cite{ImbertNonLocal,BaLeMi11}
provide a point of view slightly different from ours, and address
different kinds of evolutions. In particular, \cite{BaLeMi11}
also deals with non-monotone evolutions, such as the one describing
the motion of dislocation lines in crystals (see
also~\cite{ACM05,APLBM,BCLM08,BCLM09,CDLFM07,IM10}).  Another approach
is described in the papers of Cardaliaguet~\cite{C-NLI,C-NLII},
Cardaliaguet and Rouy~\cite{CardaliaguetRouy}, Cardaliaguet and
Ley~\cite{CL07,CL08}.
There, appropriate definition for evolving tubes are proposed and
the convergence of a time-discrete scheme (of the same kind as ours)
is addressed in~\cite{CL08}.
Moreover, except in the preliminary work~\cite{C-NLI}, the
authors of this series of papers
have taken care to never need to evaluate the
velocity on arguments which are not ``natural'' (such as smooth level sets
and their normal or second fundamental form), contrarily to what
is needed in our proof of uniqueness (as in~\cite{S}).
Unfortunately, extending their work to our approach raises complicated
technical issues, since in particular our velocity does not have
the required continuity properties, and our minimizers have unknown
regularity. This is an interesting direction for future research,
but we also believe it is useful to develop the level-set
approach in the non-local geometric setting.
\medskip

To summarize, the first goal of this paper is to investigate the geometric flow corresponding to a non-local variant of the perimeter introduced in 
\cite{BKLMP}, in connection with   image denoising.
We have  
developed a viscosity approach to non-local singular Hamiltonians, combining many ideas from
\cite{S}, \cite{IS} and \cite{Goto}. 
Through the viscosity approach we have obtained existence and uniqueness  for a suitable regularization of our  Hamiltonian, while  
a minimizing movements approach yields
 a  solution for the original Hamiltonian. The abstract approach is itself interesting and stimulating: a complete picture  at the moment is still missing, 
and  this paper represents  a first attempt to study  singular non-local Hamiltonians,
not even continuous with respect to $L^1$ convergence, but only   with respect to Kuratowski convergence of level sets.  { The combination of the
minimizing movements variational method  with  viscosity techniques} seems to be a promising approach to complete the picture. To our knowledge, up to now this
kind of study { has been carried out} only in~\cite{CL08}, \cite{CiomagaThouroude},
and a few papers by the first author and co-workers. We hope that
(borrowing in particular from~\cite{CL08}) we will be able to extend
these ideas to other motions, and also, understand how to make the proof of
the comparison result less dependent on the extension of the Hamiltonian
out of its natural domain of definition.

\section{The energy functionals}

\subsection{The $\rho$-neighborhood}
As mentioned, we focus on the study of \eqref{defmink0}.
It is well-known  that, under mild regularity assumption on  $E$ (see
for instance \cite{AmFuPa:00}) we have
\[
\lim_{\rho\to 0} \M_\rho(E)\ =\ \H^{d-1}(\partial E)
\ =\ \P(E),
\]
where $\P(E)$ is the standard perimeter of $E$.
It is also very easy to show that $\M_\rho$ $\Gamma$-converges to the standard perimeter~\cite{CLL-201x}.
An issue with definition~\eqref{defmink0} is that it depends on
the choice of the representative  within the Lebesgue equivalence class of the set $E$. 
For this reason, one
may introduce the following variant:
\begin{equation}\label{defmink}
\E_\rho(E)\ =\ \frac{1}{2\rho}\int_{\R^d} \osc_{B(x,\rho)}(\chi_E)\,dx
\end{equation}
where $\osc_A(u)$ denotes the \textit{essential oscillation} of the
measurable function $u$ over a measurable set $A$, defined by
\[
\osc_A(u)\ =\ \textup{ess} \sup_A u\,-\, \textup{ess} \inf_A u.
\]
One checks that $\E_\rho(E)$ coincides with the measure of the $\rho$-neighborhood
of the  essential boundary of $E$. Moreover, 
\[
\E_\rho(E)\ =\ \inf \{ \M_\rho(E'): \,  |E\triangle E'| = 0 \}, 
\]
where $E\triangle E'$ denotes the symmetric difference $(E\setminus E') \cup (E'\setminus E)$.
Finally, $\E_\rho(E) = \E_\rho(E^c)$ and it is finite if and only if either $E$ or $E^c$ is (essentially) bounded (where $E^c:=\R^d\setminus E$).   
An advantage of Definition~\eqref{defmink} is that can be
easily generalized to a measurable function $u\in L^1_{loc}(\R^d)$.
By a slight abuse of notation, we still denote $\E_\rho(u)$ the functional:
\begin{equation}\label{defminku}
\E_\rho(u)\ =\ \frac{1}{2\rho}\int_{\R^d} \osc_{B(x,\rho)}(u)\,dx.
\end{equation}
One can check that this energy is one-homogeneous, convex and therefore sub-additive. Moreover,  it is lower semicontinuous with respect  to weak$^*$ convergence in  $L^\infty_{loc}$, 
and  satisfies the following  generalized coarea formula
\begin{equation}\label{coarea}
\E_\rho(u)\ =\ \int_{-\infty}^\infty \E_\rho(\{u>s\})\,ds.
\end{equation}
This follows from the fact that for any $A$ and $u$ we have
\[
\osc_A (u) \ =\ \int_{-\infty}^\infty\osc_A(\chi_{\{u>s\}})\,ds.
\]
Moreover, one easily deduces that, given two measurable sets $A,B\subset \R^d$,
\begin{equation}\label{submod}
\E_\rho(A\cup B)\ +\ \E_\rho(A\cap B)\ \le \ \E_\rho(A)\ +\ \E_\rho(B)\,.
\end{equation}
Indeed,   observing that  $\chi_{A\cup B}+\chi_{A\cap B}=\chi_{A}+\chi_B$, by coarea formula  and  in view of the subadditivity of $\E_\rho$, we have 
\begin{align}
\nonumber \E_\rho(A\cup B) + \E_\rho(A\cap B) &= \int_{-\infty}^\infty \E_\rho(\{\chi_{A\cup B} + \chi_{A\cap B} > s \}) \, ds \\&=
  \E_{\rho}(\chi_A + \chi_B)\le
   \E_{\rho}(\chi_A ) +  \E_{\rho}( \chi_B) = \E_\rho(A) + \E_\rho(B).
\end{align}

\subsection{The continuous energy functional}
Fix  $\rho_0>0$ and $\underline\delta\in(0, \rho_0)$.  We consider now a Lipschitz  function $f:\R\to \R_+$ which is even, with $\textup{supp} f=[-\rho_0,\rho_0]$,
constant in $[-\underline{\delta},\underline{\delta}]$ and
nonincreasing in  $\R_+$.

We then introduce the following variant of~\eqref{defmink0}:
\begin{equation}\label{varmr}
\M^f(E)\ =\ \int_{\R^d} f(d_E(x))\,dx,
\end{equation}
where  $d_E$ is the signed distance to $\partial E$ (negative
inside $E$ and positive outside). 
Notice that $\M^f(E)$ is finite if and only if $\partial E$ is compact, i.e., $E$ or its complement is bounded. 
We now show that
$$
\M^f(E)= \int_0^{\rho_0} (-2s f'(s)) \M_s(E)\,ds.
$$
Since $\M^f(E) = \M^f( E^c)$ we can assume that 
$E^c$ is bounded. 
Then,
thanks to the co-area formula we have 
\begin{align*}
\M^f(E)\ 
&=\ \int_{\R^d} f(d_E(x))|D d_E(x)|\,dx 
= \int_{-\rho_0}^{\rho_0} f(s)\H^{d-1}(\partial \{d_E>s\} )\,ds
\\
 &= { - \int_{-\rho_0}^{\rho_0}  - f'(s) |\{s<d_E<\rho_0\} |\,ds }\\
& = { \int_{0}^{\rho_0}  - f'(s) (  |\{- s<d_E<\rho_0\}| - |\{s<d_E<\rho_0\}|) \,ds  }
\\ 
&=   \int_0^{\rho_0} (-2s f'(s)) \M_s(E)\,ds.
\end{align*}
Thanks to this, we can introduce the following variant of $\M^f$,
defined on Borel sets and which depends only on the Lebesgue equivalence
class 
\begin{equation}\label{econeffe}
\E^f(E)\ =\ \int_0^{\rho_0} (-2s f'(s)) \E_s(E) \,ds.
\end{equation}
As before, we consider the convex extension of $\E^f$, defined for all functions $u\in L^1_{loc}(\R^d)$ by  
\begin{equation}\label{econeffu}
\E^f(u)\ =\ \int_0^{\rho_0} (-2s f'(s)) \E_s(u) \,ds=\ \int_\Om 
\int_0^{\rho_0}  (-2s f'(s)) \osc_{B(x,s)}(u)\,ds\,dx.
\end{equation}
\noindent
By construction, $\E^f$ is a convex, lower semicontinuous energy
which satisfies the generalized coarea formula
\begin{equation}\label{coareaEf}
\E^f(u)\ =\ \int_{-\infty}^\infty \E^f(\{u>s\})\,ds.
\end{equation}
Clearly, \eqref{submod}  is still  true also for $\E^f$.

\subsection{The non-local curvature}\label{seccurv}

Let $E\subset\R^d$ be a smooth  set with compact boundary. We denote by 
$\nu_E(x)$ the outer normal unit vector to $\partial E$ at $x$.
The non-local curvature $\kappa_\rho$ is formally defined as the first variation of the energy $\E_\rho$ in \eqref{defmink}. 
Set
\begin{equation}
\kappa_\rho(E,x)\ =\ \kappa^+_\rho(E,x)\,+\,\kappa^-_\rho(E,x),
\end{equation}
where
\begin{equation}\label{kappaprho}
\kappa^+_\rho(E,x)\ =\begin{cases}
\ds \frac{1}{2\rho}\det (I+\rho \nabla \nu_E(x))\  &
\textrm{ if }  \dist(x+\rho \nu_E(x),E)= \rho\,, \\[2mm]\\
0 & \textrm{ otherwise,}
\end{cases}
\end{equation}
\begin{equation}
\kappa^-_\rho(E,x)\ =\begin{cases}
\ds - \frac{1}{2\rho}\det (I-\rho \nabla \nu_E(x)) &
\textrm{ if }  \dist(x -\rho \nu_E(x),E^c)= \rho\,, \\[2mm]\\
0 & \textrm{ otherwise.}
\end{cases}
\end{equation}
Let us decompose $\partial E$ into three sets: $\partial E= A_\rho^+ \cup B_\rho^+ \cup  \mathcal N_\rho^+$, where  
\begin{itemize}
\item[] $A_\rho^+:=\left\{ x \in \partial E: \text{ there exists } t > \rho: \,   \dist(x+t \nu_E(x),E)= t \right\}$, 
\item[] $B_\rho^+:=\left\{ x \in \partial E: \dist(x+\rho \nu_E(x),E)<\rho \right\}$.
\item[] $\mathcal N_\rho^+ = \partial E\setminus (A^+_\rho \cup B^+_\rho)$.
\end{itemize}
Analogously, we define$A_\rho^-, \, B_\rho^- ,\,  \mathcal N_\rho^-$ with $\nu_E(x)$ replaced by $-\nu_E(x)$, and we set $\mathcal N_\rho: = \mathcal N_\rho^+\cup \mathcal N_\rho^-$.
\begin{lemma}\label{curvatura}
Let $E\subset \R^n$ be a set of class $C^2$ with compact boundary, such that  $\hs^{d-1}(\mathcal N_\rho)=0$.
Then, for every $\f\in C^2 (\partial E;\R)$  we have 
\begin{equation}\label{first variation}
\frac{d}{d \e} \E_\rho\big(\Phi_\e(E)\big)_{|_{\e=0}} = \int_{\partial E} \kappa_\rho \f \, d \hs^{d-1},
\end{equation}
where $\Phi_\e$ is a diffeomorphism such that  $\Phi(x)= x + \e\f(x) \nu_E(x)$ for $x\in\partial E$. 
\end{lemma}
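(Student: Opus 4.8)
The plan is to rewrite $\E_\rho(\Phi_\e(E))$ in terms of the Lebesgue measures of the outer $\rho$-neighbourhood and of the inner $\rho$-erosion of $E_\e:=\Phi_\e(E)$, to differentiate these volumes at $\e=0$ by a transport argument along $\partial E_\e$, and to transfer the resulting boundary fluxes back onto $\partial E$ through the maps $\Psi^\pm(x):=x\pm\rho\,\nu_E(x)$. Using that $\E_\rho(E)=\E_\rho(E^c)$ while $\kappa_\rho(E^c,\cdot)=-\kappa_\rho(E,\cdot)$ (both $\nabla\nu_E$ and the defining conditions change sign, and $\Phi_\e$ deforms $E^c$ by $x\mapsto x-\e\f(x)\nu_{E^c}(x)$), we may assume $E$ bounded, so that $\partial E_\e$ is compact and of class $C^2$ for $|\e|$ small. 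Writing $d_{E_\e}$ for the signed distance to $\partial E_\e$, and noting that for a smooth set $\osc_{B(x,\rho)}(\chi_{E_\e})=1$ exactly when $\dist(x,\partial E_\e)<\rho$, we get
\[
\E_\rho(E_\e)=\frac1{2\rho}\bigl|\{-\rho<d_{E_\e}<\rho\}\bigr|
=\frac1{2\rho}\Bigl(\bigl|\{d_{E_\e}<\rho\}\bigr|-\bigl|\{d_{E_\e}\le-\rho\}\bigr|\Bigr),
\]
so the task reduces to differentiating, at $\e=0$, the volumes of the parallel sets $\{d_{E_\e}<\rho\}$ and $\{d_{E_\e}\le-\rho\}$.

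For this I would compute the normal velocity of the level sets $\{d_{E_\e}=\pm\rho\}$. For $\H^{d-1}$-a.e.\ $y$ with $d_E(y)=\rho$ the nearest point $x=\pi(y)\in\partial E$ is unique, $y=x+\rho\,\nu_E(x)$, and $t\mapsto x+t\,\nu_E(x)$ realizes $\dist(\cdot,E)$ on $[0,\rho]$, whence $x\in A_\rho^+\cup\mathcal N_\rho^+$; a short triangle-inequality argument shows furthermore that no point of $A_\rho^+$ can be a non-unique nearest point. Since $\Psi^+$ is Lipschitz on the compact boundary $\partial E$ and $\H^{d-1}(\mathcal N_\rho^+)=0$, it follows that $\{d_E=\rho\}$ is $\H^{d-1}$-rectifiable with finite measure, that $\H^{d-1}$-a.e.\ of it lies in $\Psi^+(A_\rho^+)$ with $\Psi^+$ injective there, and that $d_E$ is $C^1$ near such points with $\nabla d_E=\nu_E\circ\pi$. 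Perturbing $E$ into $E_\e$ moves $\pi(y)$ to $\pi(y)+\e\f(\pi(y))\nu_E(\pi(y))$, hence $d_{E_\e}(y)=d_E(y)-\e\,\f(\pi(y))+o(\e)$ uniformly near $\{d_E=\rho\}$; the classical first-variation formula for the volume of a regular sublevel set then gives, together with the analogous expansion near $\{d_E=-\rho\}$ built from $\Psi^-,A_\rho^-,\mathcal N_\rho^-$,
\[
\frac{d}{d\e}\Big|_{\e=0}\bigl|\{d_{E_\e}<\rho\}\bigr|=\int_{\{d_E=\rho\}}\f\circ\pi\ d\H^{d-1},\qquad
\frac{d}{d\e}\Big|_{\e=0}\bigl|\{d_{E_\e}\le-\rho\}\bigr|=\int_{\{d_E=-\rho\}}\f\circ\pi\ d\H^{d-1}.
\]

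It remains to rewrite these integrals on $\partial E$ via the area formula for $\Psi^\pm$ restricted to $A_\rho^\pm$. Since $\nabla_v\nu_E\in T_x\partial E$ (as $|\nu_E|\equiv1$), the tangential differential of $\Psi^\pm$ at $x$ is $I\pm\rho\,\nabla\nu_E(x)$ acting on $T_x\partial E$; for $x\in A_\rho^\pm$ the normal ray stays distance-minimizing slightly past $\rho$, which forces the eigenvalues of $I\pm\rho\,\nabla\nu_E(x)$ to be positive, so the tangential Jacobian of $\Psi^\pm$ equals $\det(I\pm\rho\,\nabla\nu_E(x))>0$. Moreover $\{x\in\partial E:\dist(x+\rho\nu_E(x),E)=\rho\}=A_\rho^+\cup\mathcal N_\rho^+$ and $\{x\in\partial E:\dist(x-\rho\nu_E(x),E^c)=\rho\}=A_\rho^-\cup\mathcal N_\rho^-$, with $\mathcal N_\rho^\pm$ negligible; hence the area formula turns the first integral above into $2\rho\int_{\partial E}\kappa_\rho^+\f\,d\H^{d-1}$ and the second into $-2\rho\int_{\partial E}\kappa_\rho^-\f\,d\H^{d-1}$. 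Dividing by $2\rho$ and subtracting then yields
\[
\frac{d}{d\e}\,\E_\rho\bigl(\Phi_\e(E)\bigr)\Big|_{\e=0}=\int_{\partial E}\bigl(\kappa_\rho^++\kappa_\rho^-\bigr)\f\ d\H^{d-1}=\int_{\partial E}\kappa_\rho\,\f\ d\H^{d-1}.
\]

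The main obstacle is the transport step. One must pin down the geometry of $\{d_E=\pm\rho\}$ — precisely, that off the $\H^{d-1}$-null set $\Psi^\pm(\mathcal N_\rho^\pm)$ (this is exactly where the hypothesis $\H^{d-1}(\mathcal N_\rho)=0$ enters) the projection onto $\partial E$ is single-valued and $d_E$ is $C^1$ with unit gradient — and obtain a first-order expansion of $\e\mapsto d_{E_\e}$ that is uniform near those level sets, so that the first-variation formula for the sublevel volumes applies rigorously. By contrast the ``shadowed'' pieces $B_\rho^\pm$ are harmless: their points lie strictly inside $\{d_{E_\e}<\rho\}$ (respectively strictly inside $\{d_{E_\e}\le-\rho\}$) for $|\e|$ small, so moving $\partial E$ there does not affect the two volumes to first order, in accordance with $\kappa_\rho^+\equiv0$ on $B_\rho^+$ and $\kappa_\rho^-\equiv0$ on $B_\rho^-$.
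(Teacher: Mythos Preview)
Your outline is correct and can be made into a complete proof; the route, however, differs from the paper's. The paper never works with the level hypersurfaces $\{d_E=\pm\rho\}$ or with the signed distance of the deformed set. Instead it splits $\E_\rho=\E_\rho^++\E_\rho^-$, and for $\E_\rho^+$ it decomposes $\partial E=A_\rho^+\cup B_\rho^+\cup \mathcal N_\rho^+$ and handles each piece separately: for $\mathrm{supp}\,\f\subset A_\rho^+$ it parametrizes the outer tube directly from $\partial E$ via $(x,t)\mapsto x+t\nu_E(x)$, so the Jacobian $\det(I+\rho\nabla\nu_E)$ appears immediately without any area formula on $\{d_E=\rho\}$; on $B_\rho^+$ the outer $\rho$-neighborhood does not change at all; and a localization writes a general $\f$ as $\f_1+\f_2+\f_3$ with $\H^{d-1}(\mathrm{supp}\,\f_3)\le\delta$, so the contribution of $\mathcal N_\rho^+$ is controlled by $C\e\delta$ and disappears as $\delta\to 0$.

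What your approach buys is a cleaner geometric picture (normal velocity of parallel surfaces, then the area formula for $\Psi^\pm$), and it makes transparent why $\kappa_\rho^\pm$ vanishes on $B_\rho^\pm$: those points simply do not project from $\{d_E=\pm\rho\}$. What the paper's approach buys is that it avoids precisely the obstacle you flag. By parametrizing from $\partial E$ rather than from $\{d_E=\pm\rho\}$, it never needs the regularity of $\{d_E=\pm\rho\}$ nor a uniform expansion of $d_{E_\e}$ near the cut locus; the $\mathcal N_\rho$-contribution is absorbed by the crude bound $|E^+_{\f,\e}\triangle E^+_{0,\e}|\le C\e\,\H^{d-1}(\mathrm{supp}\,\f)$, which needs only that $\Psi^+$ is Lipschitz. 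Your obstacle is genuine but surmountable: since $|d_{E_\e}-d_E|\le \|\f\|_\infty|\e|$ globally, the symmetric difference $\{d_{E_\e}<\rho\}\triangle\{d_E<\rho\}$ lies in $\{|d_E-\rho|\le C|\e|\}$, and a covering of the $\H^{d-1}$-null set $\Psi^+(\mathcal N_\rho^+)$ shows its contribution to the volume is $o(\e)$; on the complement $\Psi^+(A_\rho^+)$ your $C^1$ argument goes through uniformly on compacta.
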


\begin{remark}\label{nonlocvel}
{\rm
Notice that the assumption of Lemma \ref{curvatura} holds true for generic smooth sets. More precisely, given a smooth set $E$, then for almost all positive $\rho$ one has $\hs^{d-1}(\mathcal N_\rho) = 0$.
Moreover, such assumption is crucial. Indeed, let $E$ be  a rectangle of sides $2$ and $4$, respectively, and set $\rho = 1$. In this case, a curvature $\kappa_\rho$ satisfying \eqref{first variation} is not well defined. Indeed, one readily sees that such curvature $\kappa_\rho$ should depend in a non-local way on $\f$. More precisely, let $\f = \f_1 + \f_2$, where $\f_i$ are defined in a small neighborhood of the middle points $p_i$ of the large sides $L_i$ of $E$, and assume that $\f_i$ have constant sign. Then, if $\f_1 (p_1) + \f_2(p_2)>0$, then \eqref{first variation}  holds true, while if $\f_1 (p_1) + \f_2(p_2)<0$, then \eqref{first variation} holds true with $\kappa_\rho$ replaced by $\kappa^+_\rho$. In particular, the ``curvature'' at $p_1$ depends on the value of $\f$ at $p_2$.
}
\end{remark}

\begin{proof}[Proof of Lemma \ref{curvatura}]
We will show that 
\begin{equation}\label{first variationpm}
\frac{d}{d \e} \E^\pm_\rho\big(\Phi_\e(E){| E}\big)_{|_{\e=0}} = \int_{\partial E} \kappa^\pm_\rho \f \, d \hs^{d-1},
\end{equation}
where, for every set $F$ 
$$
\E^+_\rho(F|E) :=  \frac{1}{2\rho}\int_{E^c} \osc_{B(x,\rho)}(\chi_F)\,dx \quad \text{ and } \quad
\E^-_\rho(F|E) =  \frac{1}{2\rho}\int_{ E} \osc_{B(x,\rho)}(\chi_F)\,dx. 
$$

In order to prove \eqref{first variation}, we will focus on the identity
\begin{equation}\label{first variationp}
\frac{d}{d \e} \E^+_\rho\big(\Phi_\e(E)\big)_{|_{\e=0}} = \int_{\partial E} \kappa^+_\rho \f \, d \hs^{d-1},
\end{equation}
the variation of $\E^-_\rho$ being analogous.  The proof is divided in three steps: first, we prove that \eqref{first variationp} holds if the support of $\f$ is contained in  $A_\rho^+$. Then, we show that the variation vanishes on  $B_\rho^+$. Finally, by a localization argument, recalling also  $\hs^{d-1}( N_\rho^+)=0$, we deduce the validity of ~\eqref{first variationp}.\\
\smallskip

\noindent\textbf{Step 1.}
In this Step we assume that supp$(\f)\subseteq A_\rho^+$, and then prove \eqref{first variationp}.
For every $x\in A_\rho^+$ let $y_\e(x):= \Phi_\e(x)=x + \e \f(x)\nu_E(x)$, and set
\begin{equation}\label{defEcurv}
E^+_{\f,\e}: =
\left\{y_\e(x)+ t \nu_{\Phi_\e(E)}(y_\e(x)),\, x\in \partial E, 
\, t\in (-\rho,\rho) \right\} \setminus E,
\end{equation}
so that, { for $\e$ small enough} 
$$
\E^+_\rho\big(\Phi_\e(E){| E}\big) = |E^+_{\f,\e}|. 
$$
Set now 
$$
\tilde E^+_{\f,\e} := \{ x + t \nu_E(x), x\in \partial E, \, t \in (0,\rho+\e\f(x)) \} \setminus E. 
$$
By construction, since $\partial E$ is of class $C^2$,  one can see that $| E^+_{\f,\e} \triangle \tilde  E^+_{\f,\e}|= o(\e)$. 
Finally, we have
\begin{multline*}
\frac{\E^+_\rho\big(\Phi_\e(E){| E}\big) - \E^+_\rho\big(E{| E}\big) }{\e} \ 
=\ \frac{|E^+_{\f,\e} | - |E^+_{\f,0} |}{\e}\  =\ 
\frac{|\tilde E^+_{\f,\e}| - |E^+_{\f,0} |}{\e}  + o(1) \\ 
=\ \frac{1}{\e}\int_{\partial E} dx \int_0^{\e\f(x)} |\text{det}\big(I + (\rho +t) \nabla \nu_E(x)\big)| d t + o(1). 
\end{multline*}
For $\e\to 0$ we recover \eqref{first variationp}.
\smallskip

\noindent\textbf{Step 2.}
In this step we show that the curvature $\kappa^+_\rho$ vanishes on $B_\rho^+$. This amounts to show that, if $\f$ has support in $B_\rho^+$, then 
$$
\frac{d}{d \e} \E^+_\rho\big(\Phi_\e(E)\big)_{|_{\e=0}} = 0.
$$
This is readily seen, since by definition of $B_\rho^+$ we have that, for $\e$ small enough,  $E^+_{\f,\e} = E^+_{\f, 0}$, so that 
$$
\E^+_\rho\big(\Phi_\e(E){| E}\big) =|E^+_{\f,\e}| = |E^+_{\f, 0}| = \E^+_\rho\big(E{| E}\big). 
$$
\smallskip

\noindent\textbf{Step 3.}  
In this step we conclude the proof by standard localization arguments.  Given  $\delta>0$, we can always write $\f= \f_1+\f_2+\f_3$ where supp$(\f_1)\subset A_\rho^+$, supp$(\f_2)\subset B_\rho^+$, $\hs^{d-1}(\text{supp}(\f_3))\le \delta$ and $|\f_i| \le |\f|$.  Notice that
$$
\E^+_\rho\big(\Phi_\e(E){| E}\big) =|E^+_{\f_1+\f_2+\f_3,\e}| = |E^+_{\f_1+\f_2,\e}| + r, 
$$
where $|r| \le C \e \delta $.  Moreover, { for $\e$ small enough} 
$$
E^+_{\f_1+\f_2,\e} =  E^+_{\f_1,\e}.
$$
Therefore, using Step 1, and since  $\kappa^+_\rho \f_2 \equiv 0$ by Step 2,  we conclude 

\begin{equation}
\begin{split}
\limsup_{\e\to 0} \left|
\frac{\E^+_\rho\big(\Phi_\e(E){| E}\big) - \E^+_\rho\big(E{| E}\big)}{\e}-
\int_{\partial E} \kappa^+_\rho \f \, d \hs^{d-1}  \right|\le
\\
\lim_{\e\to 0} \left| \frac{|E^+_{\f_1,\e}| - |E^+_{\f_1,0}| }{\e}  - \int_{\partial E} \kappa^+_\rho \f_1 \, d \hs^{d-1}\right| +  C'\delta =C'\delta. 
\end{split}
\end{equation}
We conclude by the arbitrariness of $\delta$.

%

%
\end{proof}

Now we introduce the non-local curvature $\kappa_f$  associated to the energy \eqref{econeffe}:

\begin{equation}\label{kappaf}
\kappa_f(E,x) = \kappa^+_f(E,x) + \kappa^-_f(E,x), 
\end{equation}
where
\begin{equation}\label{kappapmf}
\kappa^\pm_f(E,x)   = \int_0^\rho (- 2s f'(s)) \kappa^\pm_s(E,x) \,ds.
\end{equation}


The following result is a direct consequence of~\eqref{econeffe}, Lemma~\ref{curvatura} and Remark~\ref{nonlocvel}:
\begin{theorem} \label{thcurvatura}
Let $E\subset \R^n$ be an open bounded set of class $C^2$.
Then, for every $\f\in C^2 (\partial E;\R)$  we have 
\begin{equation}\label{first variationf}
\frac{d}{d \e} \E \big(\Phi_\e(E)\big)_{|_{\e=0}} =  \int_{\partial E} \kappa_f(E,x) \f(x) \, d \hs^{d-1}(x),
\end{equation}
where $\Phi_\e$ is a diffeomorphism such that  $\Phi(x)= x + \e\f(x) \nu_E(x)$ for $x\in\partial E$. 
\end{theorem}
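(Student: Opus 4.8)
The plan is to reduce the statement to Lemma~\ref{curvatura} by means of the representation~\eqref{econeffe}, differentiating under the integral sign in the radius variable $s$. For $\e$ small I would write $\E^f(\Phi_\e(E))=\int_0^{\rho_0}(-2sf'(s))\,\E_s(\Phi_\e(E))\,ds$ and form the difference quotient
\[
\frac{\E^f(\Phi_\e(E))-\E^f(E)}{\e}\ =\ \int_0^{\rho_0}(-2sf'(s))\,\frac{\E_s(\Phi_\e(E))-\E_s(E)}{\e}\,ds,
\]
the aim being to let $\e\to 0$ inside the integral and to recognise the pointwise (in $s$) limit of the inner quotient through Lemma~\ref{curvatura}.

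First I would establish a Lipschitz bound on $\e\mapsto\E_s(\Phi_\e(E))$ that is \emph{uniform in} $s\in(0,\rho_0]$. Since $E$ is open, bounded and of class $C^2$, for $|\e|$ small the map $\Phi_\e$ is a diffeomorphism of $\R^d$ equal to the identity outside a fixed bounded neighbourhood of $\partial E$, so $\E_s(\Phi_\e(E))$ and $\E_s(E)$ differ only on the $(s+1)$-neighbourhood of the compact hypersurface $\partial E$. Splitting $\E_s=\E^+_s(\cdot\,|E)+\E^-_s(\cdot\,|E)$ as in the proof of Lemma~\ref{curvatura} and comparing the perturbed tubular set with the straightened set $\tilde E^+_{\f,\e}$ used there, the curvature bounds of $\partial E$ (uniform on $\partial E$ by compactness) should give $|\E_s(\Phi_\e(E))-\E_s(E)|\le C\e$ for all $s\in(0,\rho_0]$, with $C$ depending only on $\|\f\|_{C^0}$, the $C^2$-norm of $\partial E$ and $\rho_0$ but not on $s$. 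Since $(-2sf'(s))$ is bounded and supported in $[\underline\delta,\rho_0]$, the integrand above is then dominated, uniformly in $\e$, by $C\,|2sf'(s)|\in L^1(0,\rho_0)$.

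Next, by Remark~\ref{nonlocvel} one has $\hs^{d-1}(\mathcal N_s)=0$ for a.e.\ $s\in(0,\rho_0)$, so for such $s$ Lemma~\ref{curvatura} gives $\lim_{\e\to0}\e^{-1}(\E_s(\Phi_\e(E))-\E_s(E))=\int_{\partial E}\kappa_s(E,x)\f(x)\,d\hs^{d-1}(x)$. Dominated convergence then yields
\[
\frac{d}{d\e}\E^f(\Phi_\e(E))\Big|_{\e=0}\ =\ \int_0^{\rho_0}(-2sf'(s))\int_{\partial E}\kappa_s(E,x)\,\f(x)\,d\hs^{d-1}(x)\,ds,
\]
and since the double integrand is bounded and $\hs^{d-1}(\partial E)<\infty$, Fubini's theorem lets me exchange the integrations. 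By~\eqref{kappaf}--\eqref{kappapmf} the inner $s$-integral equals $\kappa_f(E,x)$ --- here I would note that extending the integral from $\rho$ to $\rho_0$ in~\eqref{kappapmf} is immaterial because $f'\equiv0$ on $(\rho_0,\infty)$, while $f'\equiv0$ near $0$ removes any difficulty at $s=0$ --- which is exactly~\eqref{first variationf}.

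The hard part is the uniform-in-$s$ estimate of the second paragraph: the precise first-variation identity of Lemma~\ref{curvatura} breaks down on the (negligible) set of radii $s$ for which $\hs^{d-1}(\mathcal N_s)>0$, so one cannot differentiate $\E_s(\Phi_\e(E))$ under the integral using that lemma alone. What saves the argument is the cruder volume estimate $|\E_s(\Phi_\e(E))-\E_s(E)|\le C\e$, valid for every $s$, which follows from elementary geometric measure theory once $\Phi_\e$ is viewed as a small $C^2$ perturbation supported near $\partial E$; this is what makes dominated convergence --- and hence the exchange of the limit with the $s$-integral --- legitimate.
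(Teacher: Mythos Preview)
Your proposal is correct and follows exactly the route the paper indicates: the paper itself does not write out a proof but simply declares the theorem ``a direct consequence of~\eqref{econeffe}, Lemma~\ref{curvatura} and Remark~\ref{nonlocvel}'', and you have correctly supplied the details of that deduction (representation~\eqref{econeffe}, pointwise limit via Lemma~\ref{curvatura} for a.e.\ $s$ by Remark~\ref{nonlocvel}, dominated convergence, Fubini). Your observation that $f'\equiv 0$ on $[0,\underline\delta]$ restricts the effective integration to $[\underline\delta,\rho_0]$, so that the domination need only be checked there, is precisely the point that makes the uniform-in-$s$ bound elementary.
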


\section{Viscosity solutions of the non-local level-set equation}
\label{viscoustheory}
In this section we introduce the level set formulation of the 
geometric evolution problem 
\begin{equation}\label{oee}
V = \kappa_f,
\end{equation}
where $V$ represents the normal velocity of the boundary of the evolving sets $t \mapsto  E_t$, and 
we give a proper notion of viscosity solution.  Then,  we develop an abstract setting where we provide existence and uniqueness results, for a  suitable class of Hamiltonians. Unfortunately, as already said in the introduction, this theory applies only to a suitable regularization of the curvature $\kappa_f$. However, we will also provide later on (Section~\ref{secgeometric}) an existence result,
without uniqueness, for equation~\eqref{oee}.
\subsection{The non-local evolution}\label{secnon-localevol}
Here we introduce the level set formulation of the  geometric evolution problem \eqref{oee}.
To this aim, following the level set approach, we identify $ E_t$ with the { superlevel set $\{u\ge 0\}$}  of a function $u:\R\times \R^d \mapsto \R$, and study the corresponding degenerate parabolic equation in $u$. 
Let $\Symd$ denote the class of $d\times d$ symmetric matrices, and $\Clo$ the class of closed subsets of $\R^d$.   We introduce the Hamiltonian $F_f:\R^d\times\R^d \times \Symd \times \Clo \mapsto \R$ defined by
\begin{equation}\label{Ff}
F_f(x,p,X,K) := \int_0^{\rho_0} (-2s f'(s))F_s(x,p,X,K)\,ds\,,
\end{equation}
where
\begin{equation}\label{Frho}
F_s(x,p,X,K)\ =\ F_s^+(x,p,X,K)\,+\,F_s^-(x,p,X,K)
\end{equation}
and the functions $F_s^+$ and $F_s^-$ are defined as follows:
\begin{equation*}
F_s^+(x,p,X,K)\ =\begin{cases}
\ds  \frac{|p|}{2 s}
\det \left [I - \frac{s}{|p|}\pro X\pro \right]^+
  &
\textrm{ if } \left\{
\begin{array}{ll} p\neq 0,\\ 
\dist(x - s\hat{p},K)\ge s,
\end{array} \right. \\[2mm]
0 & \textrm{ otherwise,}
\end{cases}
\end{equation*}
\begin{equation*}
F_s^-(x,p,X,K)\ =\begin{cases}
\ds  -\frac{|p|}{2s}
\det \left[I+\frac{s}{|p|}\pro X\pro \right]^+
  &
\textrm{ if } \left\{
\begin{array}{ll} p\neq 0, \\
\dist(x+s\hat{p},K^c)\ge s,
\end{array} \right. \\[2mm]
0 & \textrm{ otherwise.}
\end{cases}
\end{equation*}
Here $\pro:=(I-\hat{p}\otimes\hat{p})$, where, for $p\neq 0$, $\hat p=p/|p|$,
and for $X$ a symmetric matrix, $[X]^+$ is the matrix with all
eigenvalues replaced with their positive part (in particular,
$\det[X]^+=0$ for any $X$ which is not positive definite).

\begin{remark}\textup{
If  $u$ is a smooth function and $u(x)$ is not a critical level,
by Theorem \ref{thcurvatura}
we easily deduce that
\begin{equation}\label{HamilCurv}
F_f(x,D u(x),D^2 u,\{y\in\R^d: u(y)\ge u(x)\}) = |D u(x)| \, \kappa_f({\{u\ge u(x)\}},x).
\end{equation}
In this identity we use in particular that, if $p=\nabla u$, $X= D^2 u$,  $K= \{u\ge u(x)\}$, then $\det \left[I - \frac{s}{|p|}\pro X\pro \right]^+ =0$  
means that there is a direction along which the curvature is {larger} than $1/s$,  so that $\kappa_s^+(K,x) = 0$.  
}
\end{remark}

 

The level set approach consists in solving the following parabolic
Cauchy problem 
\begin{equation}
\label{levelsetf}
\begin{cases}
\ds u_t(x,t) + F_f(x,D u(x,t), D^2 u(x,t), \{ y: \, u(y,t)\ge u(x,t)\}) \,=\,0 &\\
& \hspace{-2cm} \text{ for }t> 0\,, x\in \R^d, \\
 u(0,\cdot) = u_0,
\end{cases}
\end{equation}
in the viscosity sense. { The definition of a viscosity solution
 for such a non-local Hamiltonian  will be introduced in the next subsection. We will prove
an existence and uniqueness result in this setting, which
will be applied to a smoothed variant of $F_f$.}


\subsection{The abstract setting}

{  We introduce here a notion of viscosity solutions for problems
such as~\eqref{levelsetf}. }
The issues are of course that the Hamiltonian
is non-local, but also that it is singular in $p=0$ (at least in dimension
$d\ge 3$), in the sense that it
grows as the set vanishes as a power ($d-1$, in dimension $d$) of the curvature
tensor. For this reason, we have to adapt  {both} the setting 
of Slep\v{c}ev \cite{S} for non-local evolutions (notice however
that we will consider weaker continuity assumptions with respect to
the set variable), and the
one of Ishii and Souganidis \cite{IS} (see also  Goto~\cite{Goto}) for
singular Hamiltonians.

We will first list the properties which our Hamiltonians need to { satisfy}
in order to show an existence and uniqueness result, and then introduce
the appropriate definition of a viscosity solution (which is almost
standard).
Let $\Ope$ denote the family of open sets in $\R^d$, and $\Clo$ the family
of closed sets.
We consider Hamiltonians  $F:\R^{d}\times \R^{d}\setminus \{0\} \times \Symd  \times \{\Clo\cup\Ope\} \mapsto \R$   satisfying the following properties:
\begin{itemize}
\item[i)]Translational invariance: $F(x+r, p, X ,E + r) = F(x, p, X ,E)$ for every $r\in \R^d$;
\item[ii)]Degenerate ellipticity: $F(x,p,X,E)\ge F(x,p,Y,E)$ if $X\le Y$;  
\item[iii)]Monotonicity in the set variable: $F(x,p,X,E) \ge F(x,p,X,G)$ if $E\subseteq G$;
\item[iv)]Geometric property: $F(x, \lambda p,\lambda X + \mu p \otimes p,E) = \lambda  F(x, p, X ,E)$ for all $\lambda\ge 0$, $\mu\in\R$.
\item[v)]Continuity: $F$ is continuous with respect to its first variable,
moreover, the following properties hold:
\begin{itemize}
\item[v.1)] If $x_n\to x$, $p_n\to p\neq 0$, $X_n\to X$ and  $\{K_n\}\subset \Clo$ is a sequence converging to $K$ in the Kuratowski sense, then
$$
F(x,p,X,K)\le \liminf_n F(x_n,p_n,X_n,K_n).
$$ 
\item[v.2)]
If $x_n\to x$, $p_n\to p\neq 0$, $X_n\to X$ and  $\{A_n\}\subset \Ope$ is a sequence such that $A_n^c$ converges to $A^c$ in the Kuratowski sense, then
$$
F(x,p,X,A)\ge \limsup_n F(x_n,p_n,X_n,A_n).
$$ 
\end{itemize}
\item[vi)] There exists a continuous function $c: (0,+\infty)\mapsto (0,+\infty)$ such that, for all $x\in \R^d,\, p\in \R^d\setminus \{ 0\}, \, E\in \Clo\cup\Ope$ we have
\begin{equation}\label{contbound} 
-c(|p|) \le F(x,p,\pm I,E)  \le c(|p|).
\end{equation}
\end{itemize}
Following \cite{IS}, we introduce the family  $\F$ of functions $f\in C^2([0,\infty))$ such that $f(0) = f'(0)= f''(0)=0$, and such that $f''(r) >0$ for all $r>0$ which satisfy
\begin{equation}\label{IS}
\lim_{p\to 0} \frac{f'(|p|)}{|p|} c(|p|) = 0.
\end{equation}
We refer to  \cite[p. 229]{IS} for the proof  that the family $\F$ is not empty.

Let $T>0$ be fixed. As a slight variant to \cite{IS}, we introduce the
following definition.
\begin{definition}\label{defadmissible}
We will say that $\f\in C^{0}( \R^d\times (0,T))$ is
{\em admissible at the point $\hat z=(\hat x,\hat t)$}
if it is of class $C^2$ in a neighborhood of $\hat z$ and,
in case $D\f(\hat z)=0$, the following holds: 
there exists   $f\in\F$  and $\omega\in C^0([0,\infty))$ satisfying 
 $
 \lim_{r\to 0} {\omega(r)}/{r} = 0,
 $
such that
$$
|\f(x,t) - \f(\hat z) - \f_t(\hat z)(t-\hat t)|\le f(|x-\hat x|) + \omega(|t -\hat t|)
$$
for all $(x,t)$ in a neighborhood of $\hat z$.
\end{definition}

Given a function $u_0$, which is 
uniformly continuous in $\R^d$, we want to solve
\begin{multline}
\label{levelset}
 u_t(x,t) + F(x,D u(x,t), D^2 u(x,t),  \{  y\,:\,  u(y,t) \ge u(x,t)\})=0 
\\ \text{ for } (x,t)\in  \R^d  \times  (0,T),
\end{multline}
subject to the initial condition $u(0,\cdot) = u_0$.
We introduce the following definition of a viscosity sub/supersolution, 
inspired  from { both} frameworks of  \cite{IS} and \cite{S}.
\begin{definition}\label{defvisco}
An upper semicontinuous function $u:\R^d\times [0,T)\to \R$ is
a viscosity subsolution of~\eqref{levelset} if for all 
{ $z:= (x,t) \in \R^d\times (0,T)$ and
all $\f\in C^0(\R^d\times (0,T))$ such that $u-\f$ has a
maximum at $z$ and $\f$ is admissible at $z$ we have 
\begin{equation}\label{eqsubsol}
\begin{cases}
\displaystyle
\f_t(z)+ F\left(x,D\f(z),D^2\f(z),\{y: \f(y,t)\ge \f(z)\}\right)
\le 0 & \text{if } D\f(z)\neq 0,\\[2mm]
\f_t(z) \le 0 & \text{otherwise.}
\end{cases}
\end{equation}
A lower semicontinuous function 
is a viscosity supersolution of~\eqref{levelset} if for all $z\in\R^d\times (0,T)$ and
all  $\f\in C^0(\R^d\times (0,T))$ such that $u-\f$ has a
minimum at $z$ and $\f$ is admissible at $z$ we have
}
\begin{equation}\label{eqsupsol}
\begin{cases}
\displaystyle
\f_t(z)+F\left(x,D\f(z),D^2\f(z),\{y: \f(y,t) > \f(z)\}\right)
\ge 0 & \text{if } D\f(z)\neq 0,\\[2mm]
\f_t(z) \ge 0 & \text{otherwise.}
\end{cases}
\end{equation}

Finally, a function $u$ is a viscosity solution of~\eqref{levelset} if its upper semicontinuous envelope is a subsolution and 
 its lower semicontinuous envelope is a supersolution of~\eqref{levelset}.
 \end{definition}
As it is standard in the theory of viscosity solutions, { the  maximum in the definition of  subsolutions can be assumed to be strict, while the test functions $\f$ can be assumed to be coercive 
(and similarly for supersolutions).  
}
For the reader's convenience, we show that this is the case also in our non-local setting. Assume for instance that $u$ is a subsolution, 
{
$u-\f$ has a  maximum
}
 at some $(x,t)$, with $\f$ admissible at $(x,t)$. If $D \f(x,t)\neq 0$ we replace $\f$ with 
$$
\f_\e(y,s):= \f(y,s) + \e|y-x|^2 + |t-s|^2.
$$    
Then the maximum of $u-\f_\e$ at $(x,t)$ is strict, and we recover the inequality \eqref{eqsubsol} for $\f$ by letting $\e\to 0$ and using the semicontinuity of $F$, observing that the sets
$\{\f(y,t) + \e|y-x|^2\ge \f(x,t)\}$ converge 
{
to
 $\{\f(y,t) \ge \f(x,t)\}$ in the Kuratowski sense. We use
then the semicontinuity property v.1) to conclude.
}
\par 
If $D \f(x,t) = 0$,  we choose $f\in \F$ as in Definition~\ref{defadmissible},
and replace $\f$ by 
$$
\tilde \f(y,s):= \f(y,s) + f(y-x) +  |t-s|^2.
$$ 
We still have $D \tilde\f(x,t) = 0$,  $\tilde \f$ is  admissible at $(x,t)$, 
$\tilde\f_t(x,t)=\f_t(x,t)$ and  now the 
{ maximum 
}
of $u-\tilde \f$ is strict.

Notice that our definition of supersolutions and subsolutions is formally different  than the one given in \cite{S}, that involves  the superlevel
 sets of $u$ instead of $\f$. Indeed, in the case of a subsolution
we can assume that the test function $\f$ is such that $u\le \f$,
and $u(x,t)=\f(x,t)$.
Then, 
$\{y: u(y,t)\ge u(x,t)\}\subset \{y: \f(y,t)\ge \f(x,t)\}$ so that
\begin{multline*}
F\left(x,D\f(x,t),D^2\f(x,t),\{y: u(y,t)\ge u(x,t)\}\right)
\\ \ge\ 
F\left(x,D\f(x,t),D^2\f(x,t),\{y: \f(y,t)\ge \f(x,t)\}\right).
\end{multline*}
Therefore,  our definition seems actually weaker.
The following Lemma shows that, in fact, it is equivalent.
\begin{lemma}\label{laura}
Let $u$ be a viscosity subsolution of~\eqref{levelset}.
Then, for all $(x,t)$ in $\R^d\times (0,T)$ and
all  $\f\in C^0(\R^d\times (0,T))$ admissible at $(x,t)$, with $D\f(x,t)\neq 0$, and such that $u-\f$ has a
maximum at $(x,t)$ we have 
\begin{equation}\label{eqsubsolslepcev}
\f_t(x,t)\,+\,
F\left(x,D\f(x,t),D^2\f(x,t),\{y: u(y,t)\ge u(x,t)\}\right)
\ \le\ 0.
\end{equation}
A similar statement holds for supersolutions.
\end{lemma}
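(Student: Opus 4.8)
The plan is to adapt Slep\v{c}ev's argument from~\cite{S}. Starting from an admissible test function $\f$ with $D\f(\hat z)\neq 0$ for which $u-\f$ has a maximum at $\hat z:=(\hat x,\hat t)$, I will build a sequence of admissible test functions $\f_\delta$ which \emph{coincide with $\f$ in a neighbourhood of $\hat z$} --- so that the gradient, the Hessian and the time derivative at $\hat z$, together with admissibility, are inherited from $\f$ --- but whose spatial superlevel sets $K_\delta:=\{y:\f_\delta(y,\hat t)\ge\f_\delta(\hat z)\}$ shrink, in the Kuratowski sense, onto $K_u:=\{y:u(y,\hat t)\ge u(\hat x,\hat t)\}$. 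Plugging $\f_\delta$ into the subsolution inequality~\eqref{eqsubsol} and letting $\delta\to 0$ via the semicontinuity property~v.1) will then yield~\eqref{eqsubsolslepcev}.

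By the reductions recalled before the statement (which we may strengthen, in the usual way, so that $u-\f$ has a \emph{strict global} maximum on $\R^d\times(0,T)$ at $\hat z$ and $\f$ is coercive in $x$) we may assume $u\le\f$ and, after subtracting a constant, $u(\hat z)=\f(\hat z)=:c_0$. Then $\f-u$ is lower semicontinuous and strictly positive on $(\R^d\times(0,T))\setminus\{\hat z\}$, while $K_u$ and $K_\f:=\{y:\f(y,\hat t)\ge c_0\}$ are compact with $\hat x\in K_u\subseteq K_\f$.

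Fix $\delta>0$ and pick $a>0$ with $[\hat t-a,\hat t+a]\subset(0,T)$. I look for $G_\delta\in C^0(\R^d\times(0,T))$ of the form $G_\delta(y,t)=g_\delta(y)\,\chi(t-\hat t)$, with $g_\delta\ge 0$ and $\chi$ a continuous cut-off supported in $(-a,a)$ and equal to $1$ near $0$, such that: $g_\delta\equiv 0$ on $\{y:\dist(y,K_u)\le\delta\}$ (which, as $\hat x\in K_u$, is a neighbourhood of $\hat x$, so $\f_\delta:=\f-G_\delta$ coincides with $\f$ near $\hat z$); $G_\delta\le\f-u$ on $\R^d\times(0,T)$; and $\f(y,\hat t)-g_\delta(y)<c_0$ for every $y$ in the set $C_\delta:=\{y:\dist(y,K_u)\ge\delta,\ \f(y,\hat t)\ge c_0-1\}$. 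Such a $g_\delta$ exists because $C_\delta$ is compact and does not contain $\hat x$: on $C_\delta$ one has $\sup_{C_\delta}u(\cdot,\hat t)<c_0$ by upper semicontinuity of $u$, while $\min_{y\in C_\delta}\min_{|t-\hat t|\le a}\bigl(\f(y,t)-u(y,t)\bigr)>0$ when $a$ is small, by lower semicontinuity of $\f-u$, compactness, and strictness of the maximum --- note that this last minimum is over a \emph{compact} time-window, so the endpoints $t=0,T$ never enter. With such a choice, $\f_\delta\le\f$ and $\f_\delta\ge u$, so $u-\f_\delta$ still attains its maximum at $\hat z$ with $\f_\delta(\hat z)=c_0$; $\f_\delta$ is admissible at $\hat z$ with $D\f_\delta(\hat z)=D\f(\hat z)\neq 0$, $D^2\f_\delta(\hat z)=D^2\f(\hat z)$ and $(\f_\delta)_t(\hat z)=\f_t(\hat z)$; and $K_u\subseteq K_\delta\subseteq\{y:\dist(y,K_u)<\delta\}$ (the first inclusion because $\f_\delta=\f\ge c_0$ on $K_u$; the second because a point $y$ with $\dist(y,K_u)\ge\delta$ either has $\f(y,\hat t)<c_0-1$, hence $\f_\delta(y,\hat t)<c_0$, or lies in $C_\delta$, hence again $\f_\delta(y,\hat t)<c_0$). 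Choosing $g_\delta$ nondecreasing as $\delta\downarrow 0$, the compact sets $K_\delta$ decrease with $\bigcap_{\delta>0}K_\delta=K_u$, hence $K_\delta\to K_u$ in the Kuratowski sense.

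Since $D\f_\delta(\hat z)\neq 0$, the subsolution inequality~\eqref{eqsubsol} applied to $\f_\delta$ reads
\[
\f_t(\hat z)+F\bigl(\hat x,D\f(\hat z),D^2\f(\hat z),K_\delta\bigr)\le 0 .
\]
Letting $\delta\to 0$ and invoking~v.1) with the constant sequences $x_n\equiv\hat x$, $p_n\equiv D\f(\hat z)\neq 0$, $X_n\equiv D^2\f(\hat z)$ and with $K_n=K_{\delta_n}\to K_u$, we get $F(\hat x,D\f(\hat z),D^2\f(\hat z),K_u)\le\liminf_{\delta\to0}F(\hat x,D\f(\hat z),D^2\f(\hat z),K_\delta)\le-\f_t(\hat z)$, which is~\eqref{eqsubsolslepcev}. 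For supersolutions one argues symmetrically: one \emph{adds} to $\f$ a nonnegative bump vanishing near $\hat z$ and dominated by $u-\f$, in order to enlarge the open sets $\{y:\f_\delta(y,\hat t)>\f_\delta(\hat z)\}$ monotonically up to $\{y:u(y,\hat t)>u(\hat x,\hat t)\}$ --- whose complements then converge in the Kuratowski sense --- and concludes through~v.2). The one genuinely delicate step is the construction of $G_\delta$: one must push $\f(\cdot,\hat t)$ strictly below $c_0$ everywhere outside a $\delta$-neighbourhood of $K_u$, while never going below $u$ and without touching $\f$ near $\hat z$; this hinges on a uniform positive lower bound for $\f-u$ away from $\hat z$, which is exactly the reason for localizing the modification in a compact time-window around $\hat t$. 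Everything else --- the inheritance of the $2$-jet of $\f$ at $\hat z$, and the passage to the limit --- is routine, the key structural input being the Kuratowski (semi)continuity hypotheses v.1)--v.2).
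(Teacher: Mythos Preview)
Your strategy is exactly the paper's: modify $\f$ into test functions that agree with $\f$ near $\hat z$ but whose closed superlevel sets at time $\hat t$ decrease to $K_u$, then pass to the limit via property v.1). The paper's implementation, however, is much more economical: it picks a decreasing sequence of smooth functions $\psi^n$ with $\psi^n\ge u+1/n$ and $\inf_n\psi^n=u$ (available because $u$ is upper semicontinuous) and sets $\f^n:=\min\{\f,\psi^n\}$. Then automatically $\f^n=\f$ near $\hat z$ (since $\psi^n(\hat z)>\f(\hat z)$ and both are continuous), $u\le\f^n$, and $K_n:=\{\f^n(\cdot,\hat t)\ge c_0\}$ is a decreasing sequence of closed sets with $\bigcap_n K_n=K_u$, hence $K_n\to K_u$ in the Kuratowski sense. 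No hand-built bump function is required.

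Your explicit construction of $G_\delta$ has two loose ends. First, asking $g_\delta\equiv 0$ on $\{\dist(\cdot,K_u)\le\delta\}$ and simultaneously $g_\delta(y)>\f(y,\hat t)-c_0$ on $C_\delta$, which contains $\{\dist(\cdot,K_u)=\delta\}\cap K_\f$, is inconsistent at any such boundary point where $\f(\cdot,\hat t)>c_0$; you should let $g_\delta$ vanish only on $\{\dist(\cdot,K_u)\le\delta/2\}$. Second, your justification that ``such a $g_\delta$ exists'' is incomplete: you record $\sup_{C_\delta}u(\cdot,\hat t)<c_0$ and $m_a(y):=\min_{|t-\hat t|\le a}(\f-u)(y,t)>0$ on $C_\delta$, but to insert a continuous $g_\delta$ between the two barriers you actually need $m_a(y)>\f(y,\hat t)-c_0$ on $C_\delta$ (and on the transition region) for some small $a$. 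This does hold --- $m_a(y)\nearrow(\f-u)(y,\hat t)>\f(y,\hat t)-c_0$ pointwise, and a standard l.s.c./compactness argument upgrades this to a uniform gap on the relevant compact set --- after which one can sandwich a continuous $g_\delta$ between the continuous lower barrier $[\f(\cdot,\hat t)-c_0]^+$ and the l.s.c.\ upper barrier $m_a$. So your argument can be completed, but the paper's $\min\{\f,\psi^n\}$ device bypasses all of this.
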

\begin{proof}
We can assume that the test function $\f$ is such that $u\le \f$
and $u(x,t)=\f(x,t)$.
Consider a decreasing sequence $\psi^n$ of functions which are smooth and
such that $\inf_n \psi^n=u$, $\psi^n\ge u+1/n$. Such a sequence exists
because $u$ is upper-semicontinuous. We consider now the test function
$\f^n=\min\{\f,\psi^n\}$, and notice that $\f^n = \f$ in a neighborhood of $(x,t)$, and hence $u-\f^n$  still has a maximum  at $(x,t)$.
By the very definition of subsolutions we have
\begin{equation*}
\f_t(x,t)\,+\,
F\left(x,D\f(x,t),D^2\f(x,t),\{y: \f^n(y,t)\ge u(x,t)\}\right)
\ \le\ 0.
\end{equation*}
Consider
$K_n=\{\f^n(\cdot,t)\ge u(x,t)\}\supseteq K=\{u(\cdot,t)\ge u(x,t)\}$.
Since { the sequence of} the sets $K_n$ is nonincreasing, $K_n\to \bigcap_{k} K_k$ in
the Kuratowski sense, and by construction $K=\bigcap_{k} K_k$. It follows
that
\begin{multline*}
\liminf_{n\to \infty}
 F\left(x,D\f(x,t),D^2\f(x,t),\{y: \f^n(y,t)\ge u(x,t)\}\right)
\\ \ge\  F\left(x,D\f(x,t),D^2\f(x,t),\{y: u(y,t)\ge u(x,t)\}\right)
\end{multline*}
and~\eqref{eqsubsolslepcev} follows.
\end{proof}

\begin{remark}
\label{compmon}
{\rm
By the assumption iv) on $F$,  a standard argument shows that if $u$ is a subsolution (supersolution) and $\theta:\R\to R$ is increasing, then $\theta \circ u$ is still a subsolution (supersolution).  
}
\end{remark}

\subsection{A comparison result}
Here we provide a comparison result, that is the main ingredient  to get existence and uniqueness in the viscosity setting.
Let us set
$$
Q_T:= \R^d \times (0,T), \qquad \partial_p Q_T = \R^d\times \{0\}, \qquad  \RT=Q_T\cup \partial_p Q_T.
$$
Moreover, we denote by $USC(\RT)$ and $LSC(\RT)$ the space of upper and lower semicontinuous functions on $\RT$, respectively.  
The following comparison principle is an extension of \cite[Theorem 1.7]{IS} for non local evolutions.

\begin{theorem} 
Let $u\in USC(\RT)$ and $v\in LSC(\RT)$ be a subsolution and a supersolution of \eqref{levelset}, respectively. Assume that
\begin{equation}\label{disbordo}
\lim_{r\downarrow 0} \sup \{ u(z) - v(\zeta): \, (z,\zeta) \in \partial_pQ_T\times \RT) \cup (\RT\times \partial_p Q_T), \, |z-\zeta|\le r \} \le 0.
\end{equation} 
Then $u\le v$ in $\RT$, and moreover,
$$
\lim_{r\downarrow 0} \sup \{ u(z) - v(\zeta): \, z,\zeta \in \RT, \, |z-\zeta|\le r \} \le 0.
$$
\end{theorem}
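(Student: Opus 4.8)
The plan is to follow the doubling-of-variables technique of Ishii--Souganidis, adapted to the non-local setting via the continuity properties v.1)--v.2) and Lemma~\ref{laura}. I will argue by contradiction: assuming $\sup_{\RT}(u-v)=:\sigma>0$, I will produce a point where the viscosity inequalities for $u$ and $v$ are violated.

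\begin{proof}[Proof sketch]
Suppose, for contradiction, that the conclusion fails, so that
$$
\sigma := \sup_{z\in\RT}\big(u(z)-v(z)\big) > 0.
$$
By~\eqref{disbordo} and a standard argument the supremum is not attained (nor approached) on the parabolic boundary, and after the usual replacement $u\leadsto u-\eta/(T-t)$ (which turns $u$ into a strict subsolution and forces the time variable to stay away from $T$) we may assume the relevant suprema are realized in a compact region of $Q_T$.

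\textbf{Doubling of variables.} For $\e,\alpha>0$ I would consider
$$
\Phi_{\e,\alpha}(x,y,t) := u(x,t) - v(y,t) - \frac{|x-y|^2}{2\e} - \alpha\big(|x|^2+|y|^2\big),
$$
and let $(x_\e,y_\e,t_\e)$ be a maximum point (existing by the coercive penalization in $\alpha$, which I would fix small and track at the end). Standard estimates give $|x_\e-y_\e|^2/\e\to 0$ and, along a subsequence, $x_\e,y_\e\to\hat x$, $t_\e\to\hat t$ with $u(\hat x,\hat t)-v(\hat x,\hat t)$ close to the (penalized) supremum, in particular $\hat t>0$ and $t_\e>0$ for $\e$ small. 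The delicate point specific to $\F$-solutions is the case $p_\e:=(x_\e-y_\e)/\e=0$ (or $\to 0$): here one invokes the structure of admissible test functions — the quadratic $|x-y|^2/(2\e)$ is admissible, and when its gradient vanishes one uses the $f\in\F$ bound together with property vi) and~\eqref{IS} exactly as in~\cite[Theorem 1.7]{IS} to rule out a contradiction-free outcome, concluding $\f_t\le 0$ and $\f_t\ge 0$ simultaneously, impossible against the strict-subsolution modification. So we may assume $p_\e\neq 0$ for the interesting subsequence.

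\textbf{Parabolic theorem on sums and the non-local term.} Applying the Crandall--Ishii lemma in the parabolic form at $(x_\e,y_\e,t_\e)$ I obtain $a\in\R$ and $X,Y\in\Symd$ with $a + (\text{velocity terms}) $ distributed between the sub- and supersolution inequalities, $X\le Y$, and both $X,Y$ controlled by $\tfrac{3}{\e}\begin{pmatrix}I&-I\\-I&I\end{pmatrix}$ plus the $O(\alpha)$ corrections. Writing $p_\e = (x_\e-y_\e)/\e\neq 0$ and using the geometric property iv) to absorb the $\alpha$-terms into an admissible perturbation, the subsolution inequality for $u$ (in the form~\eqref{eqsubsolslepcev} provided by Lemma~\ref{laura}) and the analogous one for $v$ read, roughly,
\begin{align*}
a + F\big(x_\e,\, p_\e+ \alpha x_\e,\, X,\, \{u(\cdot,t_\e)\ge u(x_\e,t_\e)\}\big) &\le 0,\\
a + F\big(y_\e,\, p_\e- \alpha y_\e,\, Y,\, \{v(\cdot,t_\e) > v(y_\e,t_\e)\}\big) &\ge 0.
\end{align*}
Now the \emph{key new ingredient}: one must compare the two non-local set-arguments. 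Since $u-v$ is (nearly) maximized at $(x_\e,y_\e,t_\e)$ in the doubled variables, the sub/super level sets $\{u(\cdot,t_\e)\ge u(x_\e,t_\e)\}$ and $\{v(\cdot,t_\e)>v(y_\e,t_\e)\}$ are related by an inclusion up to a translation by $x_\e-y_\e\to 0$, so by monotonicity iii) and translational invariance i) — together with the fact that these level sets converge in the Kuratowski sense along the subsequence (up to passing to their closures/interiors and using v.1), v.2)) — one shows
$$
F\big(x_\e,p_\e+\alpha x_\e,X,\cdot\big) - F\big(y_\e,p_\e-\alpha y_\e,Y,\cdot\big) \ \ge\ -o(1)
$$
as $\e\to 0$ (then $\alpha\to 0$), using $X\le Y$, degenerate ellipticity ii), the bound $|x_\e-y_\e|\to 0$, the continuity in the first variable, and the uniform bound vi) to handle the $\alpha$-perturbation of $p_\e$. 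Subtracting the two viscosity inequalities then yields $0\le -\eta/(T-\hat t)^2 - o(1) <0$, the desired contradiction. The final statement (uniform smallness of $u-v$ on pairs of nearby points) follows by the same argument applied to the doubled function with $t$-variables also doubled, exactly as in~\cite{IS}.

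\textbf{Main obstacle.} The crux is precisely the comparison of the two non-local set-arguments of $F$ across the doubling: unlike in Slep\v{c}ev's $L^1$-continuous setting, one only has Kuratowski continuity, so one must check that the translated super level set of $v$ and the sub level set of $u$ are sandwiched appropriately and that their Kuratowski limits coincide (up to the boundary $\{=\}$ vs.\ $\{>\}$ discrepancy, handled by passing between $\Clo$ and $\Ope$ via v.1)--v.2)). All other steps are the standard $\F$-solution machinery of~\cite[Theorem 1.7]{IS}.
\end{proof}
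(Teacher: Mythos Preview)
Your outline follows the right template, but the specific doubling you propose does not work here, and the paper's proof differs from yours at precisely this point.

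\textbf{The quadratic penalty is not admissible.} You double with $|x-y|^2/(2\e)$ and claim this is admissible. It is not, at a point where its spatial gradient vanishes: admissibility (Definition~\ref{defadmissible}) then requires domination by some $f\in\F$, but every $f\in\F$ has $f(0)=f'(0)=f''(0)=0$, hence $f(r)=o(r^2)$, and no such $f$ can bound $r^2/(2\e)$ near the origin. In dimension $d\ge 3$ the Hamiltonian is genuinely singular ($c(|p|)\sim|p|^{2-d}$, see~\eqref{cippi}) and the classical Crandall--Ishii--Lions quadratic doubling simply does not produce admissible test functions. This is exactly why Ishii--Souganidis (and the paper, following them verbatim here) penalise with $\alpha f(|x-y|)+\alpha(t-s)^2$ for a \emph{fixed} $\alpha>0$ and $f\in\F$, and then run the dichotomy $\hat\theta=\theta$ versus $\hat\theta<\theta$ of~\cite{IS}: in the first case the degeneracy is handled via~\eqref{IS}, in the second $|\hat p|=|\hat x-\hat y|$ is bounded away from zero and one sends only the coercivity parameter $\delta\to 0$.

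\textbf{The non-local comparison needs a uniform gap, which your scheme loses.} Even setting admissibility aside, the crucial new step is to compare the closed set $K_\delta:=\{u(\cdot,\hat t)\ge u(\hat x,\hat t)\}-\hat x$ with the open set $L_\delta:=\{v(\cdot,\hat s)>v(\hat y,\hat s)\}-\hat y$ and to pass to Kuratowski limits $K_\delta\to K$, $L_\delta^c\to L^c$. A mere inclusion $K_\delta\subseteq L_\delta$ does not survive as $K\subseteq L$ (open); one needs $\dist(K_\delta,L_\delta^c)\ge c>0$ uniformly. The paper obtains this by plugging $x=\hat x+\xi$, $y=\hat y+\xi+2\eta\hat p+z$ (with $|z|\le\eta|\hat p|$) into the maximality inequality~\eqref{maxmix} and using the strict monotonicity of $f$ together with the lower bound on $|\hat p|$ to get
\[
\alpha f(|\hat p|)-\alpha f((1-\eta)|\hat p|)\ \ge\ c\ >\ 0
\]
uniformly in $\delta$; this yields $(K_\delta\cap B_R)+2\eta\hat p+B_{\eta|\hat p|}\subseteq L_\delta$, hence in the limit $2\eta\bar p+K\subset L$, and one concludes via i), iii) and continuity in $x$ by sending $\eta\to 0$. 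With your scheme the analogous gap would be of order $|x_\e-y_\e|^2/\e$, which tends to zero by the standard doubling estimate, so the argument collapses. In short: the fixed-$\alpha$, $f\in\F$ penalisation is not a stylistic choice but is what makes both the admissibility and the Kuratowski-limit step go through.
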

\begin{proof}
The proof follows the line of the proof of \cite[Theorem 1.7]{IS}.
{
We do not provide a self-contained proof; 
we only indicate the changes needed to adapt that proof 
to the context of our  non-local setting. 
}
For the reader's convenience,
we will use the same notation as in~\cite{IS}, up to the fact that
in our case, $\Omega=\R^d$ (and the space dimension is denoted by $d$ instead
of $N$).
\par
As in \cite{IS}, by Remark \ref{compmon} we may assume without loss of generality that $u$ and $v$ are bounded, and we extend their domain of definition on $\overline Q_T$ by setting
\begin{align*}
& u(x,T) \ =\  \lim_{r\downarrow 0}\, \sup\{ u(y,s)\,:\,(y,s)\in \RT\,, 
|y-x|+|s-T|\le r\}\,, \\
& v(x,T) \ =\  \lim_{r\downarrow 0}\, \inf\{ v(y,s)\,:\,(y,s)\in \RT\,, 
|y-x|+|s-T|\le r\}\,.
\end{align*}
The functions $u$ and $v$ are still upper and lower semicontinuous in $\overline Q_T$, respectively. 

We first show that  $u$ is still a subsolution (and $v$ a supersolution) in $\R^d\times(0,T]$ in the obvious sense. Assume indeed that $u-\f$ has a strict maximum at $z=(y,T)$, where $\f$ is admissible 
{ and  coercive.
}
\par
Assume first that  $D\f(z)\neq 0$. 
For any $n\in \N$ large enough, the function $(x,t)\mapsto u(x,t) - \f(x,t) - 1/[n(T-t)]$ attains a  maximum at a point $z_n = (y_n,t_n) \in Q_T$, where $z_n\to z$ as $n\to \infty$, moreover we have $D\f(z_n)\neq 0$ for $n$ large. Hence, 
$$
\f_t(z_n) +  \frac{1}{n(T-t_n)^2}+ F(x_n, D\f(z_n), D^2\f(z_n), \{\f(\cdot,t_n)\ge \f(z_n)\}) \le 0. 
$$
Since any Kuratowski limit of  $\{\f(\cdot,t_n)\ge \f(z_n)\}$ is contained in  $\{\f(\cdot,t)\ge \f(z)\}$,  using properties iii) and v.1) of $F$ we deduce  
$$
\f_t(z) + F(x, D\f(z), D^2\f(z), \{\f(\cdot,t)\ge \f(z)\}) \le 0. 
$$

If now  $D\f(z) = 0$, we follow the lines of \cite[Proposition 1.3]{IS}. 
Since $\f$ is admissible at $z=(y,T)$, there are $\delta>0, \, f\in\F$ and $\omega \in C^0(\R)$ with $\omega(r)/r\to 0$ as $r\to 0$ such that
$$
|\f(x,t) - \f(z) - \f_t(z)(t-T)|\le f(|x-y|) + \omega(t-T)
$$ 
for all $(x,t)\in B(z,\delta)$. Without loss of generality we assume that $\omega \in C^1(\R)$ and $\omega(0) = \omega'(0)=0$ and also that $\omega(r)>0$ for $r\neq 0$.
Next choose a sequence $\omega_n\in C^2(\R)$ such that $\omega_n(r)\to \omega(r)$ and $\omega'_n(r)\to \omega'_n(r)$ locally uniformly in $\R$ and set 
\begin{align}
& \psi(x,t)=\f_t(z)(t-T)+2f(|x-y|)+2\omega(t-T)\,,\nonumber\\
&  \psi_n(x,t)=\f_t(z)(t-T)+2f(|x-y|)+2\omega_n(t-T)-\frac{1}{n(T-t)}\,. \label{psienne}
\end{align}
We have $ u-\psi$ has a 
{ strict maximum at $z$.
}
 Hence for $n$ large enough $ u-\psi_n$ has a 
 {
  strict maximum
  }
   at $z_n=(y_n,t_n) \in Q_T$, with $z_n\to z$, and 
$\psi_n$ is admissible at $z_n$. As $u$ is  a subsolution, we have, using also property iv) of $F$,
\begin{multline}\label{boh}
\f_t(z)+2\omega'_n(t_n-T)+\frac{1}{n(T-t_n)^2}
\\
+  \frac{2f'(|y_n-y|)}{|y_n-y|}F(y_n, {y_n-y},  I, \{\psi_n(\cdot, t_n)\geq \psi_n(z_n)\})\leq 0
\end{multline}
if $y_n\neq y$, while $\f_t(z)+2\omega'_n(T-t_n)+1/[n(T-t_n)^2]\leq 0$ if $y_n=y$. Letting $n\to \infty$, we get $\f_t(z)\leq 0$ thanks to \eqref{IS}.
Hence, as claimed, $u$ is a subsolution in $\R^d\times (0,T]$.
\smallskip

Now, as in \cite{IS}, we assume that 
\begin{equation}
\theta_0\,:=\,
\limsup_{r\downarrow 0} \left\{ u(z)-v(\zeta)\,:\, (z,\zeta)\in \overline{Q}_T^2,
|z-\zeta|\le r \right\} \, >\, 0
\end{equation}
and try to get a contradiction. The proof then follows identically the proof
in~\cite{IS} from page 238 until the middle of page 241. In particular
(using exactly the same notation), the case ``$\hat{\theta}=\theta$''
is identical (since the non-locality does not play any role
in that case), and we may jump to the case $\hat{\theta}<\theta$.
As in \cite{IS},
we then let
$(\hat x, \hat t, \hat y, \hat s) \in \overline Q_T \times \overline Q_T$
be the maximum point of 
\begin{equation}\label{defhat}
u(x,t) - v(y,s) -\alpha f(|x-y|) - \alpha (t-s)^2 - \e t  - \e s - \delta |x|^2 - \delta |y|^2,
\end{equation}
where $f\in\F$, and $\e,\, \alpha>0$  are suitable positive constants,
and $\delta>0$ is chosen in such a way that the value of this
maximum point is strictly positive. We then can jump to
the middle of page~241
(more precisely, up to ``Now the definition of viscosity solution yields'').
Here, the situation is a bit changed. By Lemma \ref{laura} we get
$$
2\alpha (\hat t- \hat s) + \e + F\left(\hat x, \alpha f'( |\hat p|) \frac{\hat p }{|\hat p|} + 2\delta \hat x, X+ 2\delta I, \{ u(\cdot,\hat t) \ge u(\hat x, \hat t)\}\right) \le 0,
$$
and
$$
2\alpha (\hat t- \hat s) - \e + F\left(\hat y, \alpha f'( |\hat p|) \frac{\hat p }{|\hat p|} - 2\delta \hat y, X- 2\delta I, \{ v(\cdot,\hat s) > v(\hat y, \hat s)\}\right) \ge 0 .
$$
Here $X$ is a suitable symmetric matrix which also depends on $\delta$, and $\hat p:=\hat x - \hat y$. Moreover, $X$, $\hat p$, $\hat t$ and $\hat s$ 
are uniformly bounded, while $|\hat p|$ is bounded from below. 
Therefore, we may assume that they converge, as $\delta \to 0$, to some limit denoted in \cite{IS} by $Y, \, \bar p\neq 0, \, \bar t, \, \bar s$, respectively.    
Denote
$$
K_\delta:= \{ u(\cdot,\hat t) \ge u(\hat x, \hat t) \}- \hat x, \qquad L_\delta:=  \{ v(\cdot,\hat s) > v(\hat y, \hat s) \}- \hat y.
$$
We may also assume that $K_\delta \to K$, $L_\delta^c \to L^c$ in the Kuratowski sense, for some $K\in\Clo$, $L\in\Ope$. We deduce
(using the semicontinuity properties of $F$ and the translational
invariance) that
\begin{equation}\label{iltribunalenonemale}
\begin{array}{l}
\displaystyle 2\alpha (\bar t- \bar s) +  \e
+ F\left(0, \alpha f'( |\bar p|) \frac{\bar p }{|\bar p|} , Y, K \right) \le 0,
\\[3mm]
\displaystyle 2\alpha (\bar t- \bar s) - \e
+ F\left(0, \alpha f'( |\bar p|) \frac{\bar p }{|\bar p|} , Y, L\right) \ge 0.
\end{array}
\end{equation}
By \eqref{defhat} we have
\begin{multline}\label{maxmix}
 u( x,\hat t) - u(\hat x,\hat t) \le  v(y,\hat s) - v(\hat y ,\hat s) - \\
 \left(  \alpha f(|\hat x-\hat y|) - \alpha f(|x-y|) + \delta |\hat x|^2 - \delta |x|^2 + \delta |\hat y|^2 - \delta |y|^2\right).
\end{multline}
Let $R>0$ and choose $\xi\in K_\delta \cap B_R$.
Let also $\eta\in (0,1/2)$
and $q= 2\eta \hat p$ (recall $\hat p=\hat x-\hat y$).
Choose $z$ with $|z|\le \eta|\hat p|$. Choosing $x=\hat x+\xi$
and $y=\hat y+\xi+q+z$ in~\eqref{maxmix}, and observing that
$x-y = (1-2\eta)\hat p -z$ so that $|x-y|\le (1-\eta)|\hat p|$, we
obtain since $\xi\in K_\delta$
\begin{eqnarray*}
0\le u(\hat x + \xi, \hat t) - u(\hat x,\hat t)
 \le v(\hat y + \xi + q + z, \hat s) - v(\hat y, \hat s) -\\
\Bigl(\alpha f(|\hat p| ) - \alpha  f((1-\eta) |\hat p|) - \delta \xi \cdot (2\hat x + \xi) - \delta(\xi + q+z) \cdot (2\hat y + \xi + q+z)
\Big).
\end{eqnarray*}
Since $\delta(|\hat x| + |\hat y|) \to 0$ (see \cite{IS}), $|\xi|\le R$, and
$$
\alpha f(|\hat p| ) - \alpha  f((1-\eta) |\hat p|)
\ \ge c >\ 0
$$
for some $c$ independent of $\delta$ (as $|\hat p|$ is bounded away from zero), we have 

$$
\alpha f(|\hat p| ) - \alpha  f((1-\eta) |\hat p|) - \delta \xi \cdot (2\hat x + \xi) - \delta(\xi + q+z) \cdot (2\hat y + \xi + q+z)>0
$$
for $\delta$ small.
Thus, 
$\xi+q+z\in L_\delta$. As this is true for all $|z|\le \eta|\hat p|$,
we find that for $\delta$ small enough, 
\[
q+(K_\delta\cap B_R(0)) + B_{\eta|\hat p|}(0)\ \subseteq \ L_\delta.
\]
In other words, the sets $q+(K_\delta\cap B_R(0))$ are at
distance at least $\eta|\hat p|$ from $L_\delta^c$.
Taking the (Kuratowski) limits as $\delta\to 0$ we deduce that
$\dist(2\eta \bar p+K,L^c)\ge \eta|\bar p|$, 
{ and in particular  that 
$2\eta\bar p+ K\subset L$. 
}
Using property iii) of $F$ and
the translational invariance again, we deduce that
\[
F\left(-2\eta \bar p, \alpha f'( |\bar p|) \frac{\bar p }{|\bar p|} , Y, K \right) \ge
F\left(0, \alpha f'( |\bar p|) \frac{\bar p }{|\bar p|} , Y, L \right)
\]
for any $\eta\in (0,1/2)$.
Taking the limit $\eta\to 0$, and using the continuity of $F$
with respect its first variable together with~\eqref{iltribunalenonemale},
we obtain that $2\e\le 0$, a contradiction.
\end{proof}

\subsection{Existence and uniqueness of viscosity solutions}
To show the existence of viscosity solutions, we need the following
stability result. 
\begin{proposition}\label{propstab}
Let $(u_n)_{n\ge 1}$ be a sequence of upper semicontinuous subsolutions
of~\eqref{levelset} 
and let, for any $z=(x,t)$,
\[
u^*(z)\ =\ \lim_{r\downarrow 0} \sup\left\{ 
u_n(\zeta)\,:\, |z-\zeta|\le r\,, n\ge \frac{1}{r}\right\}.
\]
Then $u^*$ is also a subsolution of~\eqref{levelset}.
\end{proposition}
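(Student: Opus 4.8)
I would adapt to our non-local, singular framework the classical half-relaxed limits (Barles--Perthame) argument for stability of subsolutions. After observing, via Remark~\ref{compmon} applied to $\theta\circ u_n$ for a bounded increasing $\theta$, that one may assume the $u_n$ uniformly bounded (so that $u^*$ is well defined and, being a half-relaxed upper limit, upper semicontinuous), the scheme is as follows. Fix $z=(x_0,t_0)\in\R^d\times(0,T)$ and an admissible test function $\f$ such that $u^*-\f$ attains a maximum at $z$; by the remark following Definition~\ref{defvisco} one may reduce to the case of a strict maximum with $u^*\le\f$, $u^*(z)=\f(z)$. Then, on a small closed ball $\overline B$ around $z$ on which this maximum is strict, pick maximum points $z_k=(x_k,t_k)$ of $u_{n_k}-\f$ over $\overline B$ along a subsequence $n_k\to\infty$, and run the usual argument (comparing with a sequence $\zeta_k\to z$ realizing $u^*(z)$, and using strictness at a limit point of the $z_k$) to get $z_k\to z$, $z_k$ interior for $k$ large, $u_{n_k}(z_k)\to u^*(z)$, hence $\f(z_k)\to\f(z)$, $\f_t(z_k)\to\f_t(z)$, with $\f\in C^2$ near $z_k$.

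In the case $D\f(z)\neq0$ one has $D\f(z_k)\neq0$ for $k$ large, so $\f$ is admissible at $z_k$ and the subsolution inequality for $u_{n_k}$ at $z_k$ reads $\f_t(z_k)+F(x_k,D\f(z_k),D^2\f(z_k),K_k)\le0$, with $K_k:=\{y:\f(y,t_k)\ge\f(z_k)\}$. The first three arguments of $F$ converge to $x_0$, $D\f(z)\neq0$, $D^2\f(z)$. For the set argument I would invoke the sequential compactness of $\Clo$ for Kuratowski convergence to assume, along a further subsequence, $K_k\to\hat K$; since $\f$ is continuous and $t_k\to t_0$, $\f(z_k)\to\f(z)$, every limit of a sequence of points $y_k\in K_k$ satisfies $\f(y,t_0)\ge\f(z)$, i.e.\ $\hat K\subseteq K:=\{y:\f(y,t_0)\ge\f(z)\}$. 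Property v.1) then gives $F(x_0,D\f(z),D^2\f(z),\hat K)\le\liminf_kF(x_k,D\f(z_k),D^2\f(z_k),K_k)\le-\f_t(z)$, and the monotonicity iii), applied to $\hat K\subseteq K$, upgrades this to $\f_t(z)+F(x_0,D\f(z),D^2\f(z),K)\le0$ --- precisely the inequality required by Definition~\ref{defvisco}.

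In the case $D\f(z)=0$ I would proceed exactly as in the $D\f=0$ part of the comparison theorem: using admissibility of $\f$ at $z$, pick $f\in\F$ and a smoothed modulus $\omega$ (with $\omega(0)=\omega'(0)=0$) and replace $\f$ by $\psi(x,t)=\f(z)+\f_t(z)(t-t_0)+2f(|x-x_0|)+2\omega(|t-t_0|)$. One checks that $u^*-\psi$ has a strict maximum at $z$, that $\psi\in C^2$ near $z$, and that $\psi$ is admissible at every nearby point (immediate where $D\psi\neq0$; where $x=x_0$, it follows from the $C^1$-Taylor estimate for $t\mapsto\omega(|t-t_0|)$). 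After extracting maxima $\tilde z_k=(\tilde x_k,\tilde t_k)\to z$ of $u_{n_k}-\psi$, at points with $\tilde x_k=x_0$ the subsolution inequality reads $\psi_t(\tilde z_k)\le0$, while at points with $\tilde x_k\neq x_0$, using the geometric property iv) to reduce the Hessian slot of $F$ to $I$ and to pull out the factor $2f'(|p_k|)/|p_k|$ with $p_k=\tilde x_k-x_0$, the nonlocal term is bounded in modulus by $\tfrac{f'(|p_k|)}{|p_k|}c(|p_k|)$ thanks to vi), hence tends to $0$ by the defining property \eqref{IS} of $\F$; in both cases $\psi_t(\tilde z_k)\to\f_t(z)$, so $\f_t(z)\le0$. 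This shows $u^*$ is a subsolution of~\eqref{levelset}.

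The main obstacle, and the only genuinely non-classical point, is the passage to the limit in the set argument of $F$. The delicate feature is that the extraction only delivers the inequality for the Kuratowski limit $\hat K$ of the superlevel sets $K_k$, which may be strictly smaller than the superlevel set $K$ of the limiting test function that Definition~\ref{defvisco} requires; it is the monotonicity iii), in the direction ``smaller set gives larger $F$'', together with the inclusion $\hat K\subseteq K$ and the one-sided semicontinuity v.1), that bridges this gap --- which is exactly what the non-standard continuity hypotheses v.1)--v.2) are designed for.
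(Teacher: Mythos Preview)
Your proposal is correct and follows essentially the same route as the paper's own (very brief) proof, which simply refers to \cite[Prop.~1.3]{IS} and \cite{S} and singles out the one non-local ingredient you also identify as the crux: any Kuratowski limit of $K_k=\{\f(\cdot,t_k)\ge\f(z_k)\}$ is contained in $\{\f(\cdot,t_0)\ge\f(z)\}$, after which one concludes by the semicontinuity v.1) together with the monotonicity iii). Your treatment of the case $D\f(z)=0$ via the auxiliary function $\psi$, property iv), the bound vi) and \eqref{IS} is likewise exactly the mechanism the paper invokes.
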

Of course, a symmetric result holds for supersolutions.
\begin{proof}
The proof of this result is a variant of the proof of \cite[Prop. 1.3]{IS}
(see also the proof of property \textbf{(P2)} in \cite{S}),
observing that if $z_n=(x_n,t_n)\to z=(x,t)$ and $\f$ is a test
function, then the sets $K_n:=\{\f(\cdot,t_n)\ge \f(z_n)\}$ converge
(up to a subsequence) in the Kuratowski sense to a set  $K\subseteq \{f(\cdot,t)\ge \f(z)\}$. We
conclude using the monotonicity and the semicontinuity properties of $F$.
\end{proof}

Given $A  \subset  \R^n$, we denote by $BUC(A)$ the  space of bounded, uniformly continuous functions from $A$ to $\R$. 
We now can state a general existence and uniqueness result:
\begin{theorem}\label{thexun}
Let $u_0\in BUC(\R^d)$.
Then, there exists a unique viscosity 
solution $u\in BUC(\R^d\times [0,\infty))$ of~(\ref{levelset}) with initial condition $u_0$.
\end{theorem}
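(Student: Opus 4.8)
The plan is to derive uniqueness directly from the comparison theorem just proved, and existence by Perron's method adapted to the singular, non-local setting, following the scheme of Ishii--Souganidis~\cite{IS}, combined with the half-relaxed-limit stability of Proposition~\ref{propstab} and Slep\v{c}ev's handling of the non-locality~\cite{S}.

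\textbf{Uniqueness and a priori regularity.} If $u,v\in BUC(\R^d\times[0,\infty))$ are two solutions with $u(\cdot,0)=v(\cdot,0)=u_0$, then on each strip $\R^d\times[0,T]$ the pairs $(u^*,v_*)$ and $(v^*,u_*)$ satisfy \eqref{disbordo}, so the comparison theorem gives $u\le v$ and $v\le u$. Boundedness of a solution is free: every constant is simultaneously a sub- and a supersolution (when $D\f(z)=0$ one only needs $\f_t(z)\le 0$, resp.\ $\ge 0$), so comparison forces $\inf u_0\le u\le\sup u_0$ on all of $\R^d\times[0,\infty)$. Spatial uniform continuity follows from translational invariance i): $u(\cdot+r,\cdot)$ solves~\eqref{levelset} with datum $u_0(\cdot+r)$, and comparison bounds $\sup_{x,t}|u(x+r,t)-u(x,t)|$ by the modulus of continuity of $u_0$ at scale $|r|$. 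Since $u(\cdot,t_0+\cdot)$ solves the equation with datum $u(\cdot,t_0)$ (whose modulus of continuity is controlled by that of $u_0$), combining this with the barrier estimates below gives a joint modulus of continuity in $(x,t)$, uniform in $t_0$; hence any constructed solution lies in $BUC(\R^d\times[0,\infty))$.

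\textbf{Barriers and Perron's method.} Given the modulus $\omega_0$ of $u_0$ and $\e>0$, for each $\hat x\in\R^d$ I would build a supersolution $\Psi^+_{\hat x}$ with $\Psi^+_{\hat x}(x,0)\ge u_0(x)$ and $\Psi^+_{\hat x}(\hat x,0)\le u_0(\hat x)+\e$, and symmetrically a subsolution $\Psi^-_{\hat x}$ from below, of the form $\Psi^\pm_{\hat x}(x,t)=u_0(\hat x)\pm(g(|x-\hat x|)+C_\e t)+\e$ with $g$ smooth, $g\ge\omega_0$, $g$ flat at $0$ with $g\ge f$ for some $f\in\F$, and $C_\e$ chosen via bound vi); only \eqref{contbound} and the geometric property iv) enter away from the critical point, while \eqref{IS} makes $\Psi^\pm_{\hat x}$ admissible at its critical point and reduces the degenerate inequality there to $\f_t\gtrless 0$. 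Then set
\[
u(z)\ :=\ \sup\big\{\,w(z)\ :\ w\ \text{a subsolution of \eqref{levelset}},\ w\le\Psi^+_{\hat x}\ \text{for all }\hat x\in\R^d\big\}.
\]
This family contains the constant $\inf u_0$, and comparison between $\Psi^-_{\hat x}$ and $\Psi^+_{\hat y}$ shows each $\Psi^-_{\hat x}$ also belongs to it, so $\Psi^-_{\hat x}\le u\le\Psi^+_{\hat x}$ and hence $u^*(\cdot,0)=u_*(\cdot,0)=u_0$. By Proposition~\ref{propstab}, $u^*$ is a subsolution. The decisive step is the standard bump argument showing $u_*$ is a supersolution: if it failed at some $z_0$ with admissible test function $\f$, one perturbs $\f$ to a subsolution strictly above $u$ near $z_0$, contradicting maximality — and here the non-locality is harmless, since by monotonicity iii) enlarging the superlevel set only decreases $F$, the favourable direction for the subsolution inequality, while the case $D\f(z_0)=0$ is treated exactly as in~\cite[Prop.~1.3]{IS} using admissibility and a function in $\F$. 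Finally, the comparison theorem applied to $u^*$ and $u_*$ (which agree with $u_0$ at $t=0$) gives $u^*\le u_*$, so $u=u^*=u_*$ is a continuous viscosity solution, which by the first step lies in $BUC(\R^d\times[0,\infty))$.

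\textbf{Main obstacle.} The delicate points are the barrier construction and the $D\f(z_0)=0$ instance of the Perron bump: both require one to remain within the class of admissible test functions, i.e.\ to dominate the relevant perturbation from above by some $f\in\F$, and it is precisely condition~\eqref{IS} together with vi) that makes this possible. The other point demanding care — that Kuratowski limits of the superlevel sets appearing in the half-relaxed limits are sandwiched as needed so that the semicontinuity properties v.1)--v.2) and monotonicity iii) can be applied — has already been isolated in Proposition~\ref{propstab} and Lemma~\ref{laura}, and can be invoked verbatim.
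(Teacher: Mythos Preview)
Your proposal is correct and follows essentially the same route as the paper: Perron's method, with $u^*$ a subsolution by Proposition~\ref{propstab}, $u_*$ a supersolution via the standard bump argument (handling the degenerate case $D\f(z_0)=0$ exactly as in~\cite[Prop.~1.3]{IS}), and conclusion by the comparison theorem. The paper is in fact sketchier than you are---it simply defers the barrier construction, the attainment of the initial datum, and the $BUC$ regularity to~\cite[Theorem~1.8]{IS}---whereas you spell out the barriers $\Psi^\pm_{\hat x}$ and the translation-invariance argument for uniform continuity; the only cosmetic difference is that the paper takes as its Perron class the subsolutions with $\min u_0\le u\le\max u_0$ and $u(\cdot,0)\le u_0$, while you use the subsolutions dominated by all the $\Psi^+_{\hat x}$, but these are equivalent standard variants.
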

\begin{proof}
The proof of this result is very classical, see~\cite{CIL,IS} and
based on Perron's method. We introduce
\[
\bar u(x,t) =
 \sup\Big\{ u(x,t)\,:\, u \textrm{ subsolution of \eqref{levelset}},
\, \min u_0\le u\le \max u_0\,, u(\cdot,0)\le u_0\Big\},
\]
and $u^*,u_*$, its upper and lower semicontinuous envelopes.
The fact that $u^*$
is a subsolution follows from Proposition~\ref{propstab},
observing that at each point $(x,t)$ we can find a suitable sequence
of subsolutions $(u_n)_{n\ge 1}$ whose relaxed upper limit is $u^*(x,t)$.

The fact that $u_*$ is also a supersolution is classical and obtained
by contradiction, assuming that at some point $\bar z=(\bar x,\bar t)$
of (strict) contact with a test function $\f\le u_*$, $\f$ does not
satisfy~\eqref{eqsupsol}. 
If $D\f(\bar z)\neq 0$, one can use the test function $\f$ to construct a new subsolution $\bar u> u_*$ in a neighborhood of $\bar z$, thus contradicting the maximality of $u_*$. 
To treat the case $D\f(\bar z)=0$ one repeats the same construction, but  (as  in the proof of \cite[Prop. 1.3]{IS})   with $\f$ replaced by
\[
\psi(x,t)=\f(\bar z)
+ \f_t(\bar z)(t-\bar t)-2f(|x-\bar x|)-2\omega(t-\bar t).
\]

The regularity properties  of $u$ and the fact that the initial condition is attained, 
can be shown as in the last part of the proof of \cite[Theorem 1.8]{IS}.
\end{proof}

\subsection{Application to our evolution problem} 
Here we show how to apply the  viscosity approach developed above to our specific problem. 
First, we extend the Hamiltonian $F_f$ in \eqref{Ff} to open sets, enforcing that the evolution of a closed set $K$ agrees with the evolution of its complement, i.e., setting  for every $A\in \Ope$:
{
\begin{equation}\label{Ffopen}
F_f(x,p,X,A) :=  - F_f(x,-p,-X,A^c)  =  F_f(x,p,X,A^c),
\end{equation}
where the last identity follows by the very definition \eqref{Ff}, \eqref{Frho} of $F_f$.
}
It turns out, however, that the Hamiltonian $F_f$ in \eqref{Ff}
does \textit{not} satisfy all the assumptions which are required
in Theorem~\ref{thexun}. In fact, 
it lacks assuptions v), v.1), v.2).
A basic counterexample is as follows: Let $K$ be a ball, 
and $x_n\to x\in \partial K$ with $x_n\not\in K$ for all $n$.
Then, $F^-_\rho(x_n,p,X,K)=0$ for all $n$ and any $p,X$, while
if $p$ is the inner normal to $\partial K$, { $X$ is small enough  and 
 the radius of the ball $K$ is large enough, then
}
$F^-_\rho(x,p,X,K)<0$. On the other hand, 
{
$F^+_\rho(x_n,p,X,K)$ will be constant
(and positive). 
}
Hence $F_f$, in that case, will be l.s.c., but not
continuous, and in particular v.2) does not hold, neither the
continuity { with respect to $x$}.

In fact, we observe now  that a continuous Hamiltonian extending
the non-local curvature~\eqref{kappaf} does not exist. Indeed,
let $K= \overline B:=\overline {B_1(0)}\subset \R^2$ and $x\in\partial B$. Let moreover 
$A_n$ be open smooth subsets of  $\bar B$ and $x_n\in \partial A_n$ be satisfying the following properties:
1) $A_n$ have vanishing diameter; 2)   $x_n \to x$; 3) 
{ 
the outer normal and the (euclidean) curvature of $A_n$ at $x_n$   agree with the outer normal and curvature of $B$ at $x$, respectively. 
These conditions are clearly compatible. Set $K_n:= B\setminus A_n$.}
Then 
$\kappa^\pm_f(K_n,x_n)=0$
for $n$ large enough (remember that $f'=0$ near $0$). The idea now is that, 
if we could extend $\kappa_f$ into a semi-continuous Hamiltonian
in the sense of v), it would follow that { $\kappa_f(K,x) \le 0$},
which is not true.  More precisely, let $u_n = -d_{K_n}$. By 
\eqref{HamilCurv} and since $Du_n(x_n) = Du(x)$,  $D^2u_n(x_n) = D^2u(x)$ we have
{
\begin{multline}
F_f(x,D u(x),D^2 u(x),K)  = \kappa_f(K,x)> 0 = \liminf_{n\to\infty} \kappa_f(K_n,x_n)
\\  
= \liminf_{n\to\infty}  F_f(x_n,D u_n(x_n),D^2 u_n(x_n),K_n)  
= 
\liminf_{n\to\infty} 
F_f(x_n,D u(x),D^2 u(x),K_n).
\end{multline}
}
Since $x_n\to x$ and $K_n\to K$, we conclude that property v.1) does not hold.
This means that 
Theorem~\ref{thexun} does not apply for our particular problem, without
further smoothing (see Proposition~\ref{Hamilsmooth} below).

We can show a continuity slightly weaker than 
assumptions  v), v.1), v.2), which however will have some utility
in the sequel. The following result shows that these properties are
essentially
true if $(x,p,X,K)$ are of the form $(x,D\f(x),D^2\f(x),\{\f\ge \f(x)\})$,
when $D\f(x)\neq 0$. Finding a result similar to Theorem~\ref{thexun}
but under this weaker assumption would be very interesting, and
is a subject for future study.
\begin{lemma}
Let $\f_n,\f:\R^d\to \R$ be $C^2_{loc}$ functions,
and assume that $\f_n\to \f$ in $C^2_{loc}$ as $n\to\infty$.
Let $x\in \R^d$ with
$D\f(x)\neq 0$ and consider a sequence $(x_n)$ with $x_n\to x$. Then,
\begin{multline}\label{Ffsci}
F_f(x,D\f(x),D^2\f(x),\{\f\ge {\f(x)}\}) \\ \le\ \liminf_{n\to\infty}
F_f(x_n,D\f_n(x_n),D^2\f_n(x_n),\{\f_n\ge \f_n(x_n)\})
\end{multline}
and
\begin{multline}\label{Ffscs}
F_f(x,D\f(x),D^2\f(x),\{\f> {\f(x)}\}) \\ \ge\  \limsup_{n\to\infty}
F_f(x_n,D\f_n(x_n),D^2\f_n(x_n),\{\f_n> \f_n(x_n)\})\,.
\end{multline}
\end{lemma}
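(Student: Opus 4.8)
The plan is to reduce \eqref{Ffsci}--\eqref{Ffscs}, via Fatou's lemma, to a pointwise--in--$s$ inequality for the building blocks $F^{\pm}_s$, and then to deal with the only delicate point, the ``switching'' condition $\dist(x\mp s\hat p,\cdot)\ge s$ in the definition of $F^{\pm}_s$. First, \eqref{Ffscs} is just \eqref{Ffsci} applied to $-\f_n\to-\f$ (which still satisfies $D(-\f)(x)\ne 0$) combined with \eqref{Ffopen}, so I concentrate on \eqref{Ffsci}. By \eqref{Ff}, \eqref{Frho}, $F_f=\int_0^{\rho_0}(-2sf'(s))\bigl(F^+_s+F^-_s\bigr)\,ds$, and the weight $-2sf'(s)$ is nonnegative and supported in $[\underline\delta,\rho_0]$. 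Setting $p_n:=D\f_n(x_n)\to p:=D\f(x)\ne 0$, $X_n:=D^2\f_n(x_n)\to X:=D^2\f(x)$, $K:=\{\f\ge\f(x)\}$, $K_n:=\{\f_n\ge\f_n(x_n)\}$, the functions $s\mapsto F^{\pm}_s(x_n,p_n,X_n,K_n)$ are bounded, uniformly in $n$, on the support of the weight; hence Fatou reduces \eqref{Ffsci} to proving, for a.e.\ $s\in(0,\rho_0)$,
\[
F^+_s(x,p,X,K)\ \le\ \liminf_n F^+_s(x_n,p_n,X_n,K_n),
\]
together with the analogous inequality for $F^-_s$; adding the two (via $\liminf a_n+\liminf b_n\le\liminf(a_n+b_n)$) then gives \eqref{Ffsci}. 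I detail the ``$+$'' inequality and indicate the symmetric changes for ``$-$''.

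Fix $s$, put $\hat p=p/|p|$ and $g(\sigma):=\dist(x-\sigma\hat p,K)$. Since $x\in K$, $g(0)=0$; and $g$ is $1$-Lipschitz, so $g(\sigma)\le\sigma$ and $\sigma\mapsto g(\sigma)-\sigma$ is nonincreasing. Hence the ``active set'' $S:=\{\sigma>0:\dist(x-\sigma\hat p,K)\ge\sigma\}$ is an interval with left endpoint $0$, say $S\cup\{0\}=[0,s^\ast]$ with $s^\ast\in[0,+\infty]$, and by definition $F^+_s(x,p,X,K)=\tfrac{|p|}{2s}\det[I-\tfrac{s}{|p|}\pro X\pro]^+$ if $s\in S$ and $0$ otherwise. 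If $s>s^\ast$ the inequality is trivial ($F^+_s\ge0$), the point $s=s^\ast$ is negligible, so assume $s<s^\ast$ and fix $s'\in(s,s^\ast)$. An elementary computation gives $\overline{B(x-s\hat p,s)}\subseteq\overline{B(x-s'\hat p,s')}$, with the two bounding spheres meeting only at $x$, whence, using $\dist(x-s'\hat p,K)=s'$ (that is, $B(x-s'\hat p,s')\cap K=\emptyset$),
\begin{equation}\label{uniquecontact}
\overline{B(x-s\hat p,s)}\cap K=\{x\}.
\end{equation}
Pass now to a subsequence realising $\liminf_n F^+_s(x_n,p_n,X_n,K_n)$. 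If $\dist(x_n-s\hat p_n,K_n)\ge s$ for infinitely many of its indices, then for those $F^+_s(x_n,p_n,X_n,K_n)=\tfrac{|p_n|}{2s}\det[I-\tfrac{s}{|p_n|}\mathcal P_{\hat p_n}X_n\mathcal P_{\hat p_n}]^+$, which converges to $\tfrac{|p|}{2s}\det[I-\tfrac{s}{|p|}\pro X\pro]^+=F^+_s(x,p,X,K)$ by continuity of $(q,Y)\mapsto\tfrac{|q|}{2s}\det[I-\tfrac{s}{|q|}\mathcal P_{\hat q}Y\mathcal P_{\hat q}]^+$ on $\{q\ne 0\}$ (with $\hat q=q/|q|$); this settles that subcase.

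The remaining, and main, subcase is that $\dist(x_n-s\hat p_n,K_n)<s$ for all large $n$ along the subsequence, so $F^+_s(x_n,p_n,X_n,K_n)=0$ and one must show $\det[I-\tfrac{s}{|p|}\pro X\pro]^+=0$. The idea is to pick $w_n\in B(x_n-s\hat p_n,s)\cap K_n$; these points are bounded, so along a further subsequence $w_n\to w$ with $|w-(x-s\hat p)|\le s$ and $\f(w)\ge\f(x)$ (by local uniform convergence), hence $w\in\overline{B(x-s\hat p,s)}\cap K=\{x\}$ by \eqref{uniquecontact}, so $\delta_n:=w_n-x_n\to 0$. Expanding $\f_n$ to second order at $x_n$ — with remainder $o(|\delta_n|^2)$ uniformly in $n$, since $D^2\f_n\to D^2\f$ locally uniformly — and inserting the two constraints $\f_n(x_n+\delta_n)\ge\f_n(x_n)$ and $|\delta_n+s\hat p_n|^2<s^2$ (i.e.\ $\hat p_n\cdot\delta_n<-|\delta_n|^2/(2s)$), one rescales first by $|\delta_n|$ and then by $|\delta_n|^2$ to obtain, in the limit along a subsequence with $\delta_n/|\delta_n|\to\hat\delta$, that $\hat\delta\cdot\hat p=0$ and $\hat\delta\cdot\pro X\pro\,\hat\delta\ge|p|/s$. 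Thus the symmetric matrix $I-\tfrac{s}{|p|}\pro X\pro$, which is the identity on $\R\hat p$, satisfies $\hat\delta\cdot(I-\tfrac{s}{|p|}\pro X\pro)\hat\delta\le 0$ with $\hat\delta\ne 0$; so it is not positive definite and $\det[I-\tfrac{s}{|p|}\pro X\pro]^+=0$, as required. The ``$-$'' inequality is proved verbatim with $x+s\hat p$, $K^c$, $\{\f<\f(x)\}$ in place of $x-s\hat p$, $K$, $\{\f\ge\f(x)\}$ (using $x\in\overline{K^c}$, valid since $D\f(x)\ne 0$); the only extra point is that, when $s$ exceeds the analogous threshold, $F^-_s(x,p,X,K)=0$ and, picking a fixed point of $\{\f<\f(x)\}$ lying in the relevant ball, $F^-_s(x_n,p_n,X_n,K_n)=0$ for $n$ large, so the (nonpositive) liminf equals $0$. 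The hard part is precisely this last subcase — it is exactly the discontinuity of $F_f$ exhibited in the paragraphs preceding the statement — and the reason the \emph{integrated} inequality nonetheless holds is twofold: the monotonicity of $\sigma\mapsto\dist(x-\sigma\hat p,K)-\sigma$ makes $S$ an interval (so $\partial S$ is Lebesgue--null) and forces, for $s$ interior to $S$, the unique--contact property \eqref{uniquecontact}; the ensuing local Taylor analysis then converts ``the radius--$s$ ball just fails to be exterior to $K_n$'' into the vanishing of the curvature determinant at the base point, the only (routine) technicalities being the uniform control of the Taylor remainder and the two successive rescalings.
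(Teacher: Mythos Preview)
Your proof is correct and follows essentially the same route as the paper's: reduce via Fatou to a pointwise-in-$s$ inequality for $F^\pm_s$, introduce the threshold $s^\ast$ (via the monotonicity of $\sigma\mapsto\dist(x-\sigma\hat p,K)-\sigma$), and in the critical subcase for $F^+_s$ use a second-order Taylor expansion to force $\det[I-\tfrac{s}{|p|}\pro X\pro]^+=0$; your unique-contact argument through the larger ball $B(x-s'\hat p,s')$ is a clean variant of the paper's case split $y\neq x$ versus $y=x$. One minor remark: for $F^-_s$ the paper's argument is simpler than your ``verbatim'' symmetry --- when the switch is on for $K_n$ one just passes the inclusion $B(x_n+s\hat p_n,s)\subseteq\{\f_n\ge\f_n(x_n)\}$ to the limit to get the switch on for $K$, with no Taylor or threshold analysis needed (and your ``hard'' subcase for $F^-_s$ is in fact vacuous, since $F^-_s\le 0$ makes the inequality automatic there).
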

\begin{proof}
We prove only the first inequality, the second one being a consequence of the first one and the identity 
$$
{ F_f(x,D\f(x),D^2\f(x),\{\f> \f(x)\}) = -F_f(x,-D\f(x),-D^2\f(x),\{-\f\ge \f(x)\}).}
$$
First of all, replacing $\f_n$ with $y\mapsto \f_n(y-x+x_n)$  we may assume (by translation invariance of the Hamiltonian) that $x_n=x$ for all $n\ge 1$. Denote
$p=D\f(x)$, $p_n=D\f_n(x)$,
$\hat{p}=D\f(x)/|D\f(x)|$, and $\hat{p}_n=D\f_n(x)/|D\f_n(x)|$,
$X=D^2\f(x)$, $X_n=D^2\f_n(x)$,  $K=\{\f\ge \f(x)\}$, $K_n=\{\f_n\ge \f_n(x)\}$.
One has that $p_n\to p$, etc, except for one detail: $K_n$ may not
converge to $K$: more precisely any Kuratowski limit of a subsequence of
$K_n$ is a set in between $\{ \f>\f(x)\}$ and $K$.

Consider now $s\in [\underline{\delta},\delta]$ 
{
(recall that $f$ is constant on $[0,\underline\delta]$).
} Since $K$ is $C^2$ near
$x$ (by the implicit function theorem), there exists a
 positive  $s^* \in (0,\delta]$, such that $\dist(x-s\hat{p},K)=s$ if $s\le s^*$,
and $\dist(x-s\hat{p},K)<s$ if $s\in (s^*,\delta]$ (possibly empty).

We want to prove that for a.e.~$s$ (in fact, for all $s\neq s^*$),
\begin{equation}\label{kappalinf}
\kappa^+_s(K,x)\ \le\ \liminf_{n\to\infty} \kappa^+_s(K_n,x).
\end{equation}
Since, clearly,
\[ 
\frac{|p_n|}{2s}
\det \left[I-\frac{s}{|p_n|} \mathcal{P}_{p_n}X_n\mathcal{P}_{p_n}\right]^+
\ \stackrel{n\to\infty}{\longrightarrow}
\ \frac{|p|}{2s}\det \left[I-\frac{s}{|p|} \pro X \pro\right]^+\,,
\]
we need to show \eqref{kappalinf} only when the right-hand side
is zero, or more precisely, when $\dist(x-s\hat{p}_n,K_n)<s$ for infinitely many
$n\ge 1$. 
In this case, let $n_k$
be a subsequence such that 
{ 
$K_{n_k} \to \tilde{K}$
}
(Kuratowski) and $\dist(x-s\hat{p}_{n_k},K_{n_k})<s$ for all $k$. Let 
$y_{n_k} \in K_{n_k}$ such that $|x-s\hat{p}_{n_k}-y_{n_k}|<s$, and we can also
assume that $y_{n_k}\to y\in \tilde{K}\subset K$. There are two situations:
\begin{itemize}

\item either $y\neq x$, in which case, since $|x-s\hat{p}-y|\le s$,
we have $s\ge s^*$. 
Since for $s > s^*$ $\kappa^+_s(K,x)= 0$, we conclude that \eqref{kappalinf} holds for all  $s\neq s^*$;

\item or $y=x$, in which case there exists $z_k\in [x,y_{n_k}]$ such that
\[
\f_{n_k}(x) \le \f_{n_k}(y_{n_k}) = \f_{n_k}(x) + D\f_{n_k}(x)\cdot(y_{n_k}-x)
+ \frac{1}{2}(D^2\f_{n_k}(z_k)(y_{n_k}-x))\cdot(y_{n_k}-x),
\]
hence
\begin{equation}\label{A1}
0\ \le\ p_{n_k}\cdot (y_{n_k}-x) \,+\, \frac{1}{2} (X_{n_k}(z_k) (y_{n_k}-x))\cdot(y_{n_k}-x).
\end{equation}
Now, 
\begin{equation*}
s^2\, >\, |x-s\hat{p}_{n_k}-y_{n_k}|^2\,=\,|y_{n_k}-x|^2+s^2 + \frac{2s}{|p_{n_k}|}
(p_{n_k}\cdot (y_{n_k}-x)),
\end{equation*}
and hence,   dividing by  $|y_{n_k}-x|^2$ we have
\begin{equation}\label{A2}
0 > 1+ \frac{2s}{|p_{n_k}|} \frac{(p_{n_k}\cdot (y_{n_k}-x))}{|y_{n_k}-x|^2},
\end{equation}
Set $\xi_{k} = (y_{n_k}-x)/|y_{n_k}-x|$. Up to a subsequence  $\xi_k\to \xi\perp \hat p$. By \eqref{A1} and \eqref{A2} we conclude
\[
\frac{s}{|p|} X\xi\cdot\xi \ \ge \ 1
\]
and in particular, $I-s'/|p|\pro X \pro$ has a negative eigenvalue as
soon as $s'>s$. It follows that $s^*\le s$ and, again,  we deduce
\eqref{kappalinf} for all  $s\neq s^*$.
\end{itemize}
Now, it remains to
take the integral for $s\in [\underline{\delta},\delta]$, and
it follows, using Fatou's lemma, that:
\begin{multline*}
F_f^+(x,D\f(x),D^2\f(x),\{\f\ge \f(x)\}) \\ \le\ \liminf_{n\to\infty}
F_f^+(x,D\f_n(x),D^2\f_n(x),\{\f_n\ge \f_n(x)\})\,.
\end{multline*}
In order to show \eqref{Ffsci}, it remains to show a similar inequality
for $F_f^-$.
\medskip

This time, we need  to show that for almost any
$s\in [\underline{\delta},\delta]$, if, for a subsequence,
$\kappa_s^-(K_{n_k},x) <0 $, then
$\kappa_s^-(K,x)$ must
also take the value $-1/(2s)\det[I+(s/|p|)\pro X \pro]^+$.
But it means precisely that $\dist(x_{n_k}+s\hat{p}_{n_k},K_{n_k}^c)=s$
for all $k$, hence $\f\ge \f(x_{n_k})$ on the ball of
center $x_{n_k}+s\hat{p}_{n_k}$ and radius $s$. Passing to the limit,
we deduce that $\f\ge \f(x)$ on the ball of center $x+s\hat{p}$ and
radius $s$, so that $\dist(x+s\hat{p},K^c)\ge s$. The thesis follows.
\end{proof}

{
Finally, we build an approximation of the Hamiltonian
$F_f$
which will fulfill the assumptions (i--v) of Section~\ref{viscoustheory}.
To this purpose it is clearly enough to  approximate $F_\rho$ for fixed $\rho$; a possibility is a follows, 
}
\begin{multline}\label{appf}
F_\e(x,p,X,E)\ :=\ \frac{|p|}{2\rho}
\det \left[ I-\frac{\rho}{|p|}\pro X\pro \right]^+
H_\e( \dist(x-\rho \hat{p},E)-\rho)\\
-\ \frac{|p|}{2\rho}
\det \left[ I+\frac{\rho}{|p|}\pro X\pro \right]^+
H_\e( \dist(x+\rho \hat{p},E^c)-\rho),
\end{multline}
where $H_\e(t)$ is a continuous approximation of the Heavyside function,
which is $1$ for $t\ge 0$, $0$ for $t\le -\e$, and nondecreasing.

In that case, 
\begin{equation}\label{cippi}
|F_\e(x,p,\pm I,E)|\le \frac{|p|}{2\rho}
\left(\left(1+\frac{\rho}{|p|}\right)^{d-1}+
\left[\left(1-\frac{\rho}{|p|}\right)^+\right]^{d-1}
\right)\le c(|p|) \sim |p|^{2-d}
\end{equation}
as $|p|\to 0$, and in dimension $d\ge 3$ this Hamiltonian is
indeed singular. The following result is straightforward:
\begin{proposition}\label{Hamilsmooth} 
{\rm
The Hamiltonian $F_\e$ satisties
all the properties required in Section~\ref{viscoustheory}.
}
\end{proposition}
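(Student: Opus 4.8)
The proposition concerns a single fixed $\rho$, and since every one of the properties i)--vi) is stable under the nonnegative-weighted integration $\int_0^{\rho_0}(-2sf'(s))\,(\cdot)\,ds$ (here $-2sf'(s)\ge0$ because $f$ is nonincreasing on $\R_+$), it is in any case enough to verify them for $F_\e$ as in~\eqref{appf}. The plan is to check i)--iv) and vi) by direct inspection of the explicit structure of~\eqref{appf}, and then to treat the continuity v), which is the only point that needs a genuine argument.

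Property i) is immediate: the only arguments that couple $x$ and $E$ are $\dist(x-\rho\hat p,E)$ and $\dist(x+\rho\hat p,E^c)$, and these are invariant under a common translation. For ii) I would use that $\Symd\ni A\mapsto\det[A]^+$ is nondecreasing for the Loewner order: by Weyl monotonicity $A\le B$ gives $\lambda_i(A)\le\lambda_i(B)$ for all $i$, hence $(\lambda_i(A))^+\le(\lambda_i(B))^+$ and, all factors being nonnegative, $\det[A]^+\le\det[B]^+$. Since $X\le Y$ implies $\pro X\pro\le\pro Y\pro$, the matrix $I-\tfrac{\rho}{|p|}\pro X\pro$ decreases when $X$ increases, while $I+\tfrac{\rho}{|p|}\pro X\pro$ increases but enters~\eqref{appf} with a minus sign; together with $H_\e\ge0$ this gives $F_\e(x,p,X,E)\ge F_\e(x,p,Y,E)$. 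For iii), $E\subseteq G$ gives $\dist(\cdot,E)\ge\dist(\cdot,G)$ and, since $G^c\subseteq E^c$, $\dist(\cdot,E^c)\le\dist(\cdot,G^c)$; with $H_\e$ nondecreasing and the signs of the two terms of~\eqref{appf} one concludes $F_\e(x,p,X,E)\ge F_\e(x,p,X,G)$. For iv) (with $\lambda>0$) one uses $\widehat{\lambda p}=\hat p$, $\mathcal P_{\lambda p}=\pro$, and $\pro(\lambda X+\mu p\otimes p)\pro=\lambda\,\pro X\pro$ since $\pro p=0$; substituting and exploiting the $1$-homogeneity of $\det[\cdot]^+$ against the prefactor $|p|$, while the distance conditions are untouched, yields the claimed identity. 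For vi), taking $X=\pm I$ gives $\pro(\pm I)\pro=\pm\pro$, a matrix with the $d-1$ eigenvalues $\pm1$ on the range of $\pro$ and $0$ along $\hat p$, so $\det[\,I\mp\tfrac{\rho}{|p|}(\pm\pro)\,]^+$ equals either $\big((1-\rho/|p|)^+\big)^{d-1}$ or $(1+\rho/|p|)^{d-1}$; since $0\le H_\e\le1$ this gives~\eqref{cippi} with
\[
c(|p|):=\frac{|p|}{2\rho}\Big(\big((1-\rho/|p|)^+\big)^{d-1}+(1+\rho/|p|)^{d-1}\Big),
\]
which is continuous and strictly positive on $(0,+\infty)$ and behaves like $|p|^{2-d}$ as $|p|\to0$.

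It remains to check v). Continuity in $x$ is clear since $\dist(x\mp\rho\hat p,\cdot)$ is $1$-Lipschitz in $x$ and $H_\e$ is continuous. For v.1), let $K_n\to K$ in the Kuratowski sense with $K_n,K\in\Clo$. For closed subsets of $\R^d$, Kuratowski convergence implies $\dist(\cdot,K_n)\to\dist(\cdot,K)$ locally uniformly, which takes care of the first term of~\eqref{appf}, namely $\det[\,\cdot\,]^+H_\e(\dist(x_n-\rho\hat p_n,K_n)-\rho)\to\det[\,\cdot\,]^+H_\e(\dist(x-\rho\hat p,K)-\rho)$. The second term, which sees $K_n^c$, is the delicate one, because Kuratowski convergence of $K_n$ does \emph{not} entail any convergence of the complements. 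However, this term carries a minus sign and $H_\e$ is nondecreasing, so all that is needed is the one-sided estimate $\limsup_n\dist(x_n+\rho\hat p_n,K_n^c)\le\dist(x+\rho\hat p,K^c)$, and this does follow from $K_n\to K$ alone: if, along a subsequence, $\dist(y_n,K_n^c)\ge r$, i.e. $B(y_n,r)\subseteq K_n$, then any $z\in B(y,r)$ lies in $K_n$ for $n$ large, hence $z\in\liminf_n K_n=K$, so $B(y,r)\subseteq K$ and $\dist(y,K^c)\ge r$. Combining the two terms gives $F_\e(x,p,X,K)\le\liminf_n F_\e(x_n,p_n,X_n,K_n)$. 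For v.2), with $A_n^c\to A^c$ in the Kuratowski sense and $A_n,A\in\Ope$, the sets $A_n^c$ are closed and converge, so the term with $\dist(\cdot,A_n^c)$ passes to the limit, while for the term with $\dist(\cdot,A_n)=\dist(\cdot,(A_n^c)^c)$ the same ball argument applied to the closed sets $A_n^c$ gives $\limsup_n\dist(y_n,A_n)\le\dist(y,A)$; since this term enters with a plus sign, $F_\e(x,p,X,A)\ge\limsup_n F_\e(x_n,p_n,X_n,A_n)$.

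The genuine obstacle is thus precisely the set-continuity in v): one must not use the naive statement ``distance functions converge under Kuratowski convergence'' symmetrically, since it fails for complements, and instead exploit the sign structure of~\eqref{appf}---the $F^+$ part sees $E$, the $F^-$ part sees $E^c$ and carries a minus sign---so that in v.1) (closed sets) and in v.2) (open sets, with complements converging) only a one-sided distance inequality is needed, and that inequality always survives passage to the Kuratowski limit. Everything else reduces to elementary properties of $\det[\cdot]^+$, of the projection $\pro$, and of the distance function.
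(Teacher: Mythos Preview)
Your verification is correct. The paper gives no proof at all for this proposition---it simply declares the result ``straightforward'' and moves on---so there is nothing to compare against beyond noting that your argument substantiates that claim. Your treatment of properties i)--iv) and vi) is routine and accurate; the only point that requires care is v), and you handle it correctly: under Kuratowski convergence of closed sets $K_n\to K$ the distances $\dist(\cdot,K_n)$ converge, while for the complements only the one-sided bound $\limsup_n\dist(y_n,K_n^c)\le\dist(y,K^c)$ survives, and the sign structure of~\eqref{appf} (the $K^c$-term carries a minus sign) is exactly what makes this sufficient for v.1), with the symmetric observation giving v.2).
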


In particular, by Theorem \ref{thexun} we deduce  existence and uniqueness, in the  viscosity setting,  of  the geometric flow
corresponding to the regularized  non local curvature \eqref{appf}. 
In the next section we show an existence result (but with no proof
of uniqueness) for the original non-local flow~\eqref{levelsetf}.

\section{The geometric evolution associated to $\E^f$}
\label{secgeometric}

We will now follow a different approach, in order to construct a
(level-set) flow
of our curvature which is actually a viscosity solution of the equation
\eqref{levelsetf}, with
$F_f$ the Hamiltonian defined in~\eqref{Ff}. Here
we assume that $u_0$ is an initial datum 
with compact support, and bounded, uniformly continuous ($u_0\in BUC_c(\R^d)$).

The construction follows the approach first suggested by Luckhaus and Sturzenhecker,
and Almgren, Taylor and Wang~\cite{LS95,ATW93}. We follow here a simple
strategy which has been elaborated in~\cite{CiomagaThouroude}
for the classical Mean Curvature Flow, and which we adapt to our setting.

First, given a time-step $h>0$  and a compact set $E$, we define $T^-_hE$ (resp, $T^+_h E$) as the
minimal (resp., maximal) solution to
\begin{multline}\label{NLATW}
\min_{F\subset \R^d} \left\{\E^f(F)\ +\ \frac{1}{h}\int_{F\triangle E} \dist(x,\partial E)\,dx\right\}
\ \\
=\ \min_{F\subset \R^d}\left\{ \E^f(F)\ +\ \frac{1}{h}\int_{F} d_E(x)\,dx\right\}
{ -\frac{1}{h}\int_E d_E(x)\, dx,
}
\end{multline}
where $d_E(x)=\dist(x,E)-\dist(x,\R^d\setminus E)$. The existence of a solution to~\eqref{NLATW}
is not totally obvious, however, it can be established by considering the equivalent
convex variational problem
\[
\min_{u\in L^1(\R^d;[0,1])}\left\{ \E^f(u)\ +\ \frac{1}{h}\int_{\R^d} u(x) d_E(x)\,dx\right\}
\]
with $\E^f$ defined in~\eqref{coareaEf}, and observing that, given a solution of that
problem, for a.e. $s\in (0,1)$ the sets $\{u>s\}$ and $\{u\ge s\}$ are a solution to~\eqref{NLATW}.
The existence of a minimal (or maximal) solution follows from the fact that if $E$, $E'$ are solutions,
then also $E\cap E'$ and $E\cup E'$ are, thanks to~\eqref{submod}. Moreover, it is not difficult
to see that if $F$ solves \eqref{NLATW}, then
$$
\M^f(F)=\E^f(F)\,,
$$
{ where $\M^f(F)$ is defined in  \eqref{varmr}.
}
The following classical lemmas hold.
{
\begin{lemma}\label{lemcomp}
If $E\subset\subset E'$, then $T^+_h E\subseteq T^-_h E'$.
Moreover, if $E\subseteq E'$, then $T^\pm_h E\subseteq T^\pm_h E'$. 
\end{lemma}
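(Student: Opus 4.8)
The plan is to prove both assertions by the standard comparison argument for minimizing movements, exploiting the submodularity inequality~\eqref{submod} for $\E^f$. Let me treat the strict-inclusion case first. Suppose $E\subset\subset E'$, and let $F$ be a solution of~\eqref{NLATW} with datum $E$ and $F'$ a solution with datum $E'$. The key identity to exploit is that $F$ minimizes $\E^f(\cdot)+\tfrac1h\int_{\cdot} d_E\,dx$ while $F'$ minimizes $\E^f(\cdot)+\tfrac1h\int_{\cdot} d_{E'}\,dx$. Testing the first minimality with the competitor $F\cap F'$ and the second with $F\cup F'$, and summing, the submodularity~\eqref{submod} makes the perimeter-type terms cancel favourably:
\[
\E^f(F\cap F') + \E^f(F\cup F') \le \E^f(F) + \E^f(F').
\]
What remains is the volume terms, which after cancellation reduce to an inequality of the form
\[
\frac1h\int_{(F\setminus F')} \big(d_E(x) - d_{E'}(x)\big)\,dx \le 0.
\]

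Now the crucial observation is that $E\subset\subset E'$ implies $d_E(x) > d_{E'}(x)$ for \emph{every} $x\in\R^d$: indeed $\dist(x,E)\ge\dist(x,E')$ and $\dist(x,\R^d\setminus E)\le\dist(x,\R^d\setminus E')$, and at least one inequality is strict because $E$ is compactly contained in $E'$ (so the two closed sets $\overline E$ and $\R^d\setminus E'$ are disjoint, hence one of the two distances is genuinely larger). Therefore the displayed inequality forces $|F\setminus F'|=0$, i.e.\ $F\subseteq F'$ up to a null set. Since this holds for \emph{every} pair of solutions, choosing $F=T^+_hE$ (maximal) and $F'=T^-_hE'$ (minimal) and then passing to the appropriate representatives (the minimal and maximal solutions are genuine sets, cf.\ the remark after~\eqref{NLATW} that they can be obtained as $\{u\ge s\}$, $\{u>s\}$), we obtain $T^+_hE\subseteq T^-_hE'$, which is the first claim.

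For the second (non-strict) claim, the same computation gives only $d_E\ge d_{E'}$ pointwise, hence $\frac1h\int_{F\setminus F'}(d_E-d_{E'})\,dx\le 0$ does not immediately yield $|F\setminus F'|=0$. Here one argues instead that $F\cap F'$ and $F\cup F'$ are themselves solutions of the respective problems (all inequalities above must be equalities), and then uses the minimality/maximality selection: $T^-_hE\cap T^-_hE'$ is a solution for $E$, so by minimality $T^-_hE\subseteq T^-_hE\cap T^-_hE'\subseteq T^-_hE'$; similarly $T^+_hE\cup T^+_hE'$ is a solution for $E'$, so by maximality $T^+_hE'\supseteq T^+_hE\cup T^+_hE'\supseteq T^+_hE$. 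This gives $T^\pm_hE\subseteq T^\pm_hE'$.

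The main obstacle is not the algebra but making the selection of the minimal/maximal solution rigorous and checking that the lattice operations $\cap,\cup$ preserve solutions — i.e.\ verifying carefully that equality propagates through the submodularity chain so that $F\cap F'$ and $F\cup F'$ are optimal, and that the passage between the convex relaxation (level sets of a minimizer $u$) and genuine minimal/maximal sets is consistent with taking intersections and unions. I would handle this by working throughout with the convex problem $\min_u \E^f(u)+\tfrac1h\int u\,d_E$, noting that $u\mapsto u\wedge v$ and $u\vee v$ of two minimizers are again minimizers (by convexity plus~\eqref{submod} applied through the coarea formula~\eqref{coareaEf}), and only at the end extracting the extremal sets as super-/sub-level sets.
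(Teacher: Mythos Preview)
Your argument for the strict inclusion $E\subset\subset E'$ is exactly the paper's: compare $F=T^+_hE$ with $F\cap F'$ and $F'=T^-_hE'$ with $F\cup F'$, sum, use submodularity~\eqref{submod}, and conclude from $d_E>d_{E'}$ that $|F\setminus F'|=0$. (Your claim that $d_E>d_{E'}$ holds at \emph{every} point is slightly stronger than needed and not entirely obvious; the paper writes ``$d_E>d_{E'}$ a.e.'', which is all your integral inequality requires.)

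For the non-strict inclusion you take a genuinely different route. The paper perturbs $d_E$ to $d_E+\e$, obtains minimal solutions $F_\e\subseteq T^-_hE'$ from the strict case, and then passes to the Kuratowski limit as $\e\to0$, checking that the limit is still a minimizer and contains $T^-_hE$. Your approach is more direct: when $d_E\ge d_{E'}$ the chain of inequalities collapses to equalities, so $F\cap F'$ and $F\cup F'$ are themselves minimizers for the $E$- and $E'$-problems respectively, and you then invoke the extremality of $T^\pm_h$. This avoids any limiting argument and works cleanly at the set level; your final worry about needing to pass through the convex relaxation is unnecessary, since from $a\le b$, $c\le d$ and $b+d\le a+c$ (in the notation of your summed inequalities) you get $a=b$ and $c=d$ directly, which is precisely what certifies that $F\cap F'$ and $F\cup F'$ are optimal. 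The paper's perturbation approach, on the other hand, is more robust in situations where one does not wish to track the equality case of submodularity.
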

}
\begin{proof}
The proof is classical and we just sketch it. 
We  first assume that $E\subset\subset E'$, so that $d_E> d_{E'}$ a.e.
We compare  the energy~\eqref{NLATW} of $F=T^+_h E$ with the one of $F\cap F'$,  
where $F'=T^-_h E'$,
and the energy { \eqref{NLATW} (with $E$ replaced by $E'$)
of $F'$ with the one of $F\cup F'$. 
}
We sum both inequalities and use~\eqref{submod} to deduce that $F\subseteq F'$.

Now, if $d_E\ge d_{E'}$, we replace $d_E$ with $d_E+\e$ and observe that the corresponding
minimal solutions $F_\e$ and $F'$  satisfy $F_\e \subseteq F'$. 
{
Let $F_0$ be the Kuratowsky limit of $F_\e$ (up to a subsequence). Then, it is easy to see that $F_0$ is a solution, and 
$T_h^- E\subseteq F_0\subseteq F'=T_h^- E'$.
}
The proof for $T_h^+$ is almost identical.
\end{proof}
\begin{lemma}\label{lemdist}
Let $E\subset\subset E'$ and let  $\delta=\dist(\partial E,\partial E') >0$. Then 
$T^+_h E \subset\subset T^-_h E'$ and,  more precisely, 
$\dist(\partial T^+_hE,\partial T^-_h E')\ge \delta$.
\end{lemma}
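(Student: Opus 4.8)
The plan is to exploit the translation invariance of the energy $\E^f$ together with the comparison statement of Lemma \ref{lemcomp}, in the spirit of the classical argument for minimizing movements. First I would reduce to a quantitative statement about translates: since $\dist(\partial E, \partial E') = \delta > 0$, for every vector $\tau \in \R^d$ with $|\tau| < \delta$ one has $E + \tau \subset\subset E'$ (indeed any point of $\partial(E+\tau)$ is within distance $|\tau| < \delta$ of $\partial E$, hence still strictly inside $E'$, and one checks inclusion of the sets, not just the boundaries, using that $E \subset\subset E'$ with a genuine collar of width $\delta$). Then by the first part of Lemma \ref{lemcomp} applied to the pair $E + \tau \subset\subset E'$ we get $T^+_h(E+\tau) \subseteq T^-_h E'$.

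The key observation is that the scheme \eqref{NLATW} commutes with translations: since $\E^f$ is translation invariant and $d_{E+\tau}(x) = d_E(x-\tau)$, the change of variables $x \mapsto x - \tau$ in the bulk term shows that $F$ solves the minimization problem for $E + \tau$ if and only if $F - \tau$ solves it for $E$. Consequently $T^+_h(E+\tau) = (T^+_h E) + \tau$ (maximal solutions are mapped to maximal solutions). Combining this with the inclusion from the previous paragraph yields
$$
(T^+_h E) + \tau \ \subseteq\ T^-_h E' \qquad \text{for every } \tau \text{ with } |\tau| < \delta.
$$
This says exactly that the closed $\delta$-neighborhood of $T^+_h E$ — more precisely, $\bigcup_{|\tau|<\delta}\big((T^+_h E)+\tau\big) = T^+_h E + B_\delta(0)$ — is contained in $T^-_h E'$, from which $\dist(\partial T^+_h E, \partial T^-_h E') \ge \delta$ follows: any point of $\partial T^-_h E'$ cannot lie within distance $\delta$ of $T^+_h E$, and by applying the same argument to the complements (using that $\E^f(F) = \E^f(F^c)$ and that $T^\pm_h$ respects complementation, as used in Lemma \ref{lemcomp}) any point of $\partial T^+_h E$ is at distance at least $\delta$ from $\partial T^-_h E'$. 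In particular $T^+_h E \subset\subset T^-_h E'$.

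The main obstacle is the bookkeeping at the level of sets rather than boundaries: one must be careful that "$\dist(\partial E,\partial E')\ge\delta$" together with $E\subset\subset E'$ genuinely gives $E + B_\delta(0) \subseteq E'$ (and not merely a statement about boundaries), and that the minimal/maximal selections $T^\pm_h$ are well-defined and translation-equivariant — the latter relying on the uniqueness of the minimal and maximal solutions established via \eqref{submod} in Lemma \ref{lemcomp}. A secondary technical point is handling the case where the infimum over $\tau$ with $|\tau| < \delta$ is not attained, which is why the distance bound is stated as $\ge \delta$ and obtained by passing to the limit $|\tau| \uparrow \delta$ using the Kuratowski-convergence argument already invoked in the proof of Lemma \ref{lemcomp}.
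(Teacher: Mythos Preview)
Your proposal is correct and follows exactly the same approach as the paper's proof: translate $E$ by any vector of length less than $\delta$, apply Lemma~\ref{lemcomp} to get $T^+_h(E+\tau)\subseteq T^-_h E'$, then use translation invariance of the scheme to rewrite the left-hand side as $(T^+_h E)+\tau$. The paper dispatches this in three lines, so your additional bookkeeping about sets versus boundaries, complementation, and the limit $|\tau|\uparrow\delta$ is more than the authors deemed necessary (indeed the distance bound $\ge\delta$ follows directly from the inclusion holding for all $|\tau|<\delta$, with no limiting argument required).
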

\begin{proof}
Let $z\in \R^d$ with $|z|<\delta$: $z+E\subset\subset E'$ so that
$T^+_h (z+E) \subseteq T^-_h(E)$. By translation invariance of the scheme it follows that
$z+T^+_h(E)\subseteq T^-_h(E)$, and we deduce the thesis.
\end{proof}

If $E$ is a non-compact set with compact boundary, we can define $T^\pm_h E$ in a similar way
(or simply let $T^\pm_h E = \R^d\setminus (T^\mp_h(\R^d\setminus E))$), and still the comparison
holds.
Thanks to the comparison lemma~\ref{lemcomp}, starting from a function $u\in BUC_c(\R^d)$ (with compact support, or constant outside of a compact set),
for $s>s'$  we have 
 $T^+_h\{u\ge s\} \subseteq  T^-_h\{u\ge s'\}$. It follows that we can
define a function
\[
T_h u(x) \ :=\ \sup\{ s\ :\ x\in T^+_h \{u\ge s\}\}\ =\ \sup\{ s\ :\ x\in T^-_h \{u\ge s\}\}.
\]
We easily see that for a.e.~$s$, $\{ T_h u \ge s\}= T^\pm_h \{u\ge s\}$.
Using Lemma~\ref{lemdist}, we find that the distance between two such level sets of $T_h u$
is larger than the distance between the corresponding level sets of $u$: hence 
$T_h u \in BUC_c(\R^d)$, with the same modulus of continuity. Finally, we can deduce (by
approximation) that for any level $s\in \R$, 
\[
T_h^- \{u\ge s\}\ =\ \{ T_h u > s\}\ ,\qquad T_h^+ \{u\ge s\}\ =\ \{ T_h u \ge s\}.
\]

Now, starting from $u_0$, we build a function $u_h(x,t):\R^d\times \R_+\to \R$ by letting
\[
u_h(x,t)\ :=\ (T_h)^{[\frac{t}{h}]} u_0
\]
where $[\cdot]$ is the integer part. By construction, $u_h$ has a uniform spatial modulus
of continuity. 
The next lemma deals with the non-local evolution of balls.
\begin{lemma}\label{balls}
Let $x\in\R^d$, $r_0>0$ and let $E_0=B(x,r_0)$. Then for every $h,t>0$ we have 
$$
({T^\pm_h})^{[\frac{t}{h}]}(E_0) =  B(x, r_h^\pm(t)),
$$
for some $r_h^\pm(t) \ge 0 $. Moreover,
$
r^\pm_h(t) \to r(t)
$
uniformly in $[0,T^*(r_0)]$ as $h\to 0$, where  $r$ is the solution to 
\begin{equation}\label{ode}
\left\{
\begin{array}{ll}
\displaystyle \dot r(t) = \int_{\bar \delta}^\delta f'(s) \left[(1+\frac{s}{r})^{d-1} - \big( (1-\frac{s}{r})^+\big)^{d-1} \right] ds,\\
r(0) = r_0, 
\end{array}
\right.
\end{equation}
and  $T^*(r_0)$ is extinction time of $r(t)$ (i.e, such that $r(T^*(r_0))=0$).
Finally, there exists $c_0>0$ such that for every $r_0 \le1$ we have 
\begin{equation}\label{czero}
T^*(r_0)\ge c_0\, r_0^d.
\end{equation}
\end{lemma}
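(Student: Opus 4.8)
The plan is to prove the three assertions in turn.

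\emph{Balls remain balls.} Fix the centre $x$ and, by translation invariance of the scheme, assume $x=0$, so that $d_{E_0}(y)=|y|-r_0$ is radial; then both $\E^f$ and the map $F\mapsto\int_F d_{E_0}$ are invariant under $O(d)$. Hence, if $F$ is the minimal solution of \eqref{NLATW}, $RF$ is also a solution for every $R\in O(d)$; minimality gives $F\subseteq RF$, and applying this with $R^{-1}$ too yields $F=RF$, so $F$ is rotationally symmetric about $0$. A rotationally symmetric set with compact boundary is a finite union of concentric balls and annuli; I would rule out everything but a single ball by a filling/trimming comparison: filling an inner hole contained in $B(0,r_0)$ strictly lowers $\int_F d_{E_0}$ and does not increase any $\E_s$ (an explicit computation on concentric shells gives $\E_s(B(0,b))\le\E_s(B(0,b)\setminus B(0,a))$), while an outer component contained in $\{d_{E_0}\ge 0\}$ can only raise the energy and may be discarded; since the minimal solution is contained in every solution, any configuration other than a ball (or the empty set) leads to a contradiction. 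The same reasoning applies to the maximal solution. Iterating, $(T^\pm_h)^{[t/h]}(E_0)=B(x,r^\pm_h(t))$ for some nonincreasing $r^\pm_h$ (with the value $0$, i.e.\ extinction, absorbing).

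\emph{Identification of the limit.} By the previous step the scheme reduces to a one-dimensional recursion: writing $r=r^\pm_h(kh)$, the next radius is the minimal (resp.\ maximal) minimizer over $\rho\ge 0$ of
\[
g_r(\rho)\ :=\ \E^f(B(0,\rho))\ +\ \frac1h\int_{B(0,\rho)}(|y|-r)\,dy .
\]
Using $\E_s(B(0,\rho))=\frac{\omega_d}{2s}\big[(\rho+s)^d-((\rho-s)^+)^d\big]$, \eqref{econeffe}, and the fact that $f'$ is supported in $[\underline\delta,\rho_0]$ (so that $\underline\delta=\bar\delta$ and $\rho_0=\delta$ in the notation of \eqref{ode}), the equation $g_r'(\rho)=0$ reads
\[
\frac{\rho-r}{h}\ =\ \int_{\bar\delta}^{\delta} f'(s)\Big[\big(1+\tfrac{s}{\rho}\big)^{d-1}-\big((1-\tfrac{s}{\rho})^+\big)^{d-1}\Big]\,ds\ =:\ \Psi(\rho),
\]
a (semi-implicit) one-step discretization of \eqref{ode}. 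Since $f'\le 0$ and the bracket is positive, $\Psi\le 0$, which matches the monotonicity of the radii; and for $h$ small $g_r$ is strictly convex near $\rho=r$ (the penalization term contributes $\frac{d\omega_d}{h}r^{d-1}>0$ to $g_r''(r)$, dominating the bounded $\E^f$ contribution), so the minimizer is unique and $r^+_h=r^-_h$. As $\Psi$ is locally Lipschitz on $(0,+\infty)$, the classical convergence theory for one-step schemes gives $r^\pm_h\to r$ uniformly on every $[0,T]$ with $T<T^*(r_0)$. To reach the extinction time itself I would use that the discrete radii and $r$ are nonincreasing, together with the monotone dependence of the scheme on the initial radius (Lemma~\ref{lemcomp}), to sandwich $r^\pm_h$ between the discrete flows issued from $r_0\pm\eta$ and then let $\eta\to0$.

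\emph{Lower bound on the extinction time.} From \eqref{ode}, $\frac{d}{dt}\,r(t)^d=d\,r^{d-1}\dot r=d\,r^{d-1}\Psi(r)$, and since $r^{d-1}(1+s/r)^{d-1}=(r+s)^{d-1}$,
\[
r^{d-1}|\Psi(r)|\ \le\ \int_{\bar\delta}^{\delta}|f'(s)|\,(r+s)^{d-1}\,ds\ \le\ \|f'\|_\infty\,\delta\,(1+\delta)^{d-1}\ =:\ C
\]
for every $r\le 1$. Hence $\frac{d}{dt}\,r(t)^d\ge -dC$ as long as $r\le 1$, so $r(t)^d\ge r_0^d-dC\,t$ and $T^*(r_0)\ge r_0^d/(dC)=:c_0\,r_0^d$.

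\emph{Main obstacle.} The two delicate points are: in the first step, showing that a genuine minimizer of the nonlocal, nonsmooth functional \eqref{NLATW} with ball data is a ball (rotational symmetry is immediate, but excluding multi-shell configurations needs the explicit monotonicity of $\E_s$ under filling and a careful sign analysis of $\int_F d_{E_0}$); and, in the second step, pushing the convergence of the discrete radii all the way up to $T^*(r_0)$, where $\Psi$ is singular ($\Psi(r)\sim -c\,r^{-(d-1)}$ as $r\to0$ for $d\ge 3$). The remaining estimates are routine.
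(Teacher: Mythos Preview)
Your proof is correct and follows essentially the same route as the paper: reduce to radial competitors, derive the one-dimensional Euler equation, identify it as a one-step scheme for~\eqref{ode}, and bound the extinction time via $|\Psi(r)|\le c\,r^{1-d}$. Two minor differences are worth noting.

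For the first step, the paper replaces your filling/trimming case analysis by a single rearrangement: once $F$ is radially symmetric, one compares it with the ball $B(r)$ of the \emph{same volume} and observes that both terms in~\eqref{NLATW} decrease (strictly unless $F$ is already a ball). This handles all shell configurations at once and avoids the case distinctions you flag as delicate; your approach works too but requires more care (for instance, an annulus straddling $\partial B(r_0)$ is neither an ``inner hole inside $B(r_0)$'' nor an ``outer component in $\{d_{E_0}\ge 0\}$'').

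For the convergence up to $T^*(r_0)$, the paper does not need your sandwich between flows from $r_0\pm\eta$: since both $r$ and $r_h^\pm$ are nonincreasing and $r_h^\pm\to r$ uniformly on every $[0,T]$ with $T<T^*$, the smallness of $r(T^*-\eta)$ together with monotonicity already forces $\sup_{[T^*-\eta,T^*]}|r_h^\pm-r|\to 0$. Your observation that the discrete minimizer is unique for small $h$ (hence $r_h^+=r_h^-$) is correct but not used in the paper.
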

\begin{proof}
By translation invariance we may assume $x=0$. Since the union of any family of minimizers of \eqref{NLATW} is still a minimizer, we deduce that any rotation of  $({T^+_h})^{[\frac{t}{h}]}(E_0)$ is contained in  
${(T^+_h)}^{[\frac{t}{h}]}(E_0)$ i.e., the maximal solution is radially symmetric. Analogously, by the stability of the minimality property with respect to intersection we deduce that   $(T^-_h)^{[\frac{t}{h}]}(E_0)$ is radially symmetric.
By a rearrangement procedure it can be readily seen that
 the maximal and minimal solutions are in fact balls.  Indeed, let $r\ge 0$  be determined by $|B(r)| = {(T^+_h)}^{[\frac{t}{h}]}(E_0)$. Then it is easy to see
that
\begin{align*}
& \E^f(B(r)) \le \E^f((T^\pm_h)^{[\frac{t}{h}]}(E_0))\,, \\
& \int_{B(r) \triangle E_0} \dist(x,\partial E_0)\,dx \le 
\int_{ (T^\pm_h)^{[\frac{t}{h}]}(E_0) \triangle E_0} \dist(x,\partial E_0)\,dx\,
\end{align*}
with strict inequality whenever the radially symmetric set $(T^\pm_h)^{[\frac{t}{h}]}(E_0)$ is not a ball.  

For $0<r<R$ let $e(r,R)$ be the total energy in \eqref{NLATW} for $E= B_R$ and $F= B(r)$, i.e.,  
{
\begin{equation}
e(r,R)\ =\ - \int_{\underline\delta}^\delta f'(s) \omega_d [(r+s)^d - ((r-s)^+)^d]\,ds
\,+\, \frac{d\omega_d}{h}\int_r^R (R-s)s^{d-1}\,ds,
\end{equation}
where $\omega_d$ denotes, as usual, the volume of the unit ball in $\R^d$.
}
A straightforward computation shows that  $\frac{\partial}{\partial r} e(r,R) = 0$  is equivalent to
\begin{equation}\label{euleropalle}
\frac{1}{h} (r-R) = \int_{\underline\delta}^\delta f'(s) [(1+\frac{s}{r})^{d-1} - ((1-\frac{s}{r})^+)^{d-1}] ds.
\end{equation}
Now we construct the approximated evolution starting from $B(r_0)$. To this purpose, let us set $r_{h,0} = r_0$ and define 
$ r_{h,i} $ recursively,  as the minimum point of $e(r, r_{h,i-1})$ (and we stop if $ r_{h,i} =0$). Denote by $\hat r_h(t)$ the piecewise affine interpolation of $r_{i,h}$ given by
$$
\hat r_h(t)=r_{h, [\frac th]}+\left(t-\left[\tfrac th\right]\right)\left(r_{h, [\frac {t+1}h]}-r_{h, [\frac th]}\right)\,.
$$
 Then, by \eqref{euleropalle},  $\hat r_h(t)$ solves 
$$
\left\{
\begin{array}{ll}
\displaystyle \frac{d}{dt} \hat r_h(t) = g\left(\hat r_h \left(\left[\tfrac{t+1}{h}\right] \right) \right)  ;\\
\hat r_h(0) = r_0,
\end{array}
\right.
$$
where
$$
g(r):= \int_{\bar \delta}^\delta f'(s) \left[(1+\frac{s}{r})^{d-1} - \big( (1-\frac{s}{r})^+\big)^{d-1} \right] ds.
$$
Let $[0,T^*(r_0))$ be the maximal interval of  definition for the solution to problem \eqref{ode}. Clearly, we have $r(T^*(r_0))=0$. Moreover,  standards stability arguments in ODE yield that $\hat r_h$, and in turn $r_h$, converge uniformly to $r$ in $[0,T]$ for every $T<T^*(r_0)$. The uniform convergence in  $[0,T^*(r_0)]$ follows by monotonicity.

Noticing that, for $r\le1$, $|g(r)| \le c \, r^{1-d}$ for some $c>0$, the final bound on $T^*(r_0)$  follows by  comparing with the solution to 
$$
\left\{
\begin{array}{ll}
\displaystyle \dot r(t) = -c r^{1-d},\\
r(0) = r_0. 
\end{array}
\right.
$$
\end{proof}

\begin{lemma}\label{timecontinuity}
There exists a time modulus of continuity $\hat\omega$, such that for any 
$\delta>0$, there exists $h(\delta)$ such that
if $x\in\R^d$, $0<h\le h(\delta)$, and
$t,s\ge 0$ with $|t-s|\le\delta$ then
\[
|u_h(x,t)-u_h(x,s)|\ \le\ \hat\omega(\delta)
\]
\end{lemma}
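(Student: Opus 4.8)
The plan is to sandwich the level sets of $u_h$ between shrinking balls, using the comparison principle of Lemma~\ref{lemcomp}, the explicit evolution of balls in Lemma~\ref{balls}, the uniform \emph{spatial} modulus of continuity of the $u_h(\cdot,t)$, and the extinction lower bound~\eqref{czero}. We may assume $s\ge t$ and set $k:=[s/h]-[t/h]$, so that $u_h(\cdot,s)=(T_h)^k u_h(\cdot,t)$ and $kh\le |s-t|+h\le\delta+h$; as soon as $h\le\delta$ this yields $kh\le 2\delta$. Let $\omega$ denote the common spatial modulus of continuity of all the functions $u_h(\cdot,t)$, whose existence is granted by Lemma~\ref{lemdist}; replacing if needed $\omega(\sigma)$ by $\omega(\sigma)+\sigma$, we may take $\omega$ continuous and strictly increasing with $\omega(0)=0$, and denote by $\omega^{-1}$ its inverse. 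Since the scheme preserves constants ($T_h^\pm\R^d=\R^d$, $T_h^\pm\emptyset=\emptyset$), comparison gives $\min u_0\le u_h\le\max u_0$, hence $|u_h(x,s)-u_h(x,t)|\le\max u_0-\min u_0$ always; this already settles the claim for $\delta$ bounded away from $0$, so we focus on small $\delta$.

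Fix $x\in\R^d$, set $\lambda:=u_h(x,t)$, let $m>0$ be a parameter to be chosen and put $r_m:=\omega^{-1}(m)$. The modulus of continuity gives
\[
B(x,r_m)\subseteq\{u_h(\cdot,t)\ge\lambda-m\},\qquad \{u_h(\cdot,t)\ge\lambda+m\}\subseteq\R^d\setminus B(x,r_m).
\]
Iterating the identity $T_h^{+}\{v\ge\mu\}=\{T_hv\ge\mu\}$ (valid at each stage since $(T_h)^j u_h(\cdot,t)\in BUC_c$), using the monotonicity of $T_h^{+,k}$ (Lemma~\ref{lemcomp}), the relation $T_h^{+,k}(\R^d\setminus F)=\R^d\setminus T_h^{-,k}F$, and Lemma~\ref{balls} (which gives $T_h^{\pm,k}B(x,r_m)=B(x,r_h^\pm(kh))$), we obtain
\[
B(x,r_h^+(kh))\subseteq\{u_h(\cdot,s)\ge\lambda-m\},\qquad \{u_h(\cdot,s)\ge\lambda+m\}\subseteq\R^d\setminus B(x,r_h^-(kh)).
\]
Hence, provided $r_h^\pm(kh)>0$, the point $x$ lies in $\{u_h(\cdot,s)\ge\lambda-m\}$ and outside $\{u_h(\cdot,s)\ge\lambda+m\}$, i.e. $|u_h(x,s)-u_h(x,t)|\le m$.

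It remains to choose $m=m(\delta)$ so that the comparison balls do not become extinct before time $kh\le 2\delta$, uniformly for small $h$. By~\eqref{czero}, if $r_m\le 1$ the solution $r$ of~\eqref{ode} with $r(0)=r_m$ has extinction time $T^*(r_m)\ge c_0 r_m^d$. We set $m(\delta):=\omega\big((4\delta/c_0)^{1/d}\big)$, so that $r_{m(\delta)}=(4\delta/c_0)^{1/d}$ and $T^*(r_{m(\delta)})\ge 4\delta>2\delta$ for $\delta$ small enough that $r_{m(\delta)}\le 1$; in particular $r(2\delta)>0$ since $r>0$ on $[0,T^*(r_{m(\delta)}))$. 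By Lemma~\ref{balls}, $r_h^\pm\to r$ uniformly on $[0,2\delta]\subseteq[0,T^*(r_{m(\delta)})]$ as $h\to 0$, and since $r$ is nonincreasing we get, for all $0<h\le h(\delta)$ with $h(\delta)$ small enough (and $h(\delta)\le\delta$),
\[
r_h^\pm(kh)\ \ge\ r(kh)-\tfrac12 r(2\delta)\ \ge\ r(2\delta)-\tfrac12 r(2\delta)\ =\ \tfrac12 r(2\delta)\ >\ 0.
\]
By the previous step this gives $|u_h(x,s)-u_h(x,t)|\le m(\delta)$, and setting
\[
\hat\omega(\delta):=\min\Big\{\max u_0-\min u_0,\ \omega\big((4\delta/c_0)^{1/d}\big)\Big\}
\]
yields a modulus of continuity (it vanishes as $\delta\downarrow 0$ because $\omega$ does) for which the asserted estimate holds, uniformly in $x,t,s$.

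\medskip
\noindent The main obstacle is precisely the last step: one must ensure that the comparison ball survives on the whole interval $[0,2\delta]$, which is exactly what the extinction lower bound~\eqref{czero} is for, and one must make the threshold $h(\delta)$ explicit via the uniform convergence $r_h^\pm\to r$ of Lemma~\ref{balls} on the (now fixed, once $\delta$ is fixed) interval $[0,2\delta]$. A secondary, purely bookkeeping difficulty is keeping careful track of $T_h^+$ versus $T_h^-$ and of complements along the chain of comparisons.
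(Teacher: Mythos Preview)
Your proof is correct and follows essentially the same approach as the paper's: both compare the level sets of $u_h$ with shrinking balls via Lemma~\ref{lemcomp}, use the spatial modulus of continuity $\omega$ to set up the initial inclusions, and invoke the extinction bound~\eqref{czero} from Lemma~\ref{balls} to guarantee the ball survives for a time of order $\delta$, arriving at $\hat\omega(\delta)=\omega((C\delta/c_0)^{1/d})$. Your version is simply a more explicit write-up of the paper's five-line argument, with careful tracking of $T_h^+$ versus $T_h^-$, complements, and the passage from $r_h^\pm$ to $r$ via the uniform convergence in Lemma~\ref{balls}.
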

\begin{proof}
{
Let $\omega$ be a spatial
modulus of continuity for $u_0$, and  therefore  also for $u_h(\cdot, t)$ with $t\ge 0$. 
Fix $r_0>0$. Then,  $u_h(y,t)\le u_h(x,t)+\omega(r_0)$ for all
$y\in B_{r_0}(x)$. Lemma~\ref{balls} shows that if $h$ is small enough,
then $u_h(x,t+s)\le u_h(x,t)+\omega(r_0)$ for $s\le c_0 r_0^d/2$.
Analogously, by $u_h(y,t)\ge u_h(x,t)- \omega(r_0)$ for all
$y\in B_{r_0}(x)$ we deduce $u_h(x,t+s)\ge u_h(x,t)-\omega(r_0)$ for $s\le c_0 r_0^d/2$.
The thesis follows if we choose $r_0= (2\delta/c_0)^{1/d}$, 
$\hat\omega(\delta)=\omega(r_0)$.
}
\end{proof}

Thanks to Lemma~\ref{timecontinuity}, we can extract a subsequence $(h_k)_{k\ge 1}$
such that $u_{h_k}$ converges locally uniformly  in $\R^d\times \R_+$ to a function
$u(x,t)$ which is bounded and uniformly continuous in space and time. 

\begin{remark}\label{conflin}
{\rm
Let $h_n\to 0$ be such that $u_{h_n}$ admits a limit $u$. Then, as a straightforward consequence of Lemma \ref{balls}, we deduce that if for some level
$s\in\R$,
$u(t, \cdot)\ge s$ (resp., $\le s$) on a ball of radius $r_0$,
then $u(\cdot,t')\ge s$ (resp., $\le s$) on  the concentric ball  with radius $r(t'-t)$, for $t'\ge t$, provided that $r(t'-t)>0$ (here $r(\cdot)$ solves~\eqref{ode}).
}
 \end{remark}

We can now show the main result of this section.

\begin{theorem}\label{thexist}
The limit $u$ is a viscosity solution of~\eqref{levelsetf}, in the
sense of Definition~\ref{defvisco}.
\end{theorem}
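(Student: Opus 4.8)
The plan is to show that the locally uniform limit $u$ of the time-discrete approximations $u_{h_k}$ is both a viscosity subsolution and a viscosity supersolution of~\eqref{levelsetf}. By symmetry (the scheme treats $E$ and $E^c$ symmetrically, and $F_f$ satisfies $F_f(x,p,X,A)=-F_f(x,-p,-X,A^c)$), it suffices to prove that $u$ is a subsolution; the supersolution property follows by applying the subsolution argument to $-u$ (which is the limit of the discrete flow of the complements). So fix $z_0=(x_0,t_0)\in\R^d\times(0,\infty)$ and a test function $\f$, admissible at $z_0$, such that $u-\f$ has a strict local maximum at $z_0$; as explained after Definition~\ref{defvisco}, we may assume the maximum is strict and $\f$ is coercive, and after subtracting a constant that $u(z_0)=\f(z_0)=:c$.

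First I would treat the degenerate case $D\f(z_0)=0$, where we only need $\f_t(z_0)\le 0$. This is exactly where admissibility of $\f$ enters: using the bound $|\f(x,t)-\f(z_0)-\f_t(z_0)(t-t_0)|\le f(|x-x_0|)+\omega(|t-t_0|)$ with $f\in\F$, one replaces $\f$ by the comparison function $\psi(x,t)=c+\f_t(z_0)(t-t_0)-2f(|x-x_0|)-2\omega(t-t_0)$ and argues, as in the abstract theory, that the singular contribution of $F_f$ against $\pm I$ and the direction $\pm(y_n-x_0)$ is killed by condition~\eqref{IS}; the remaining inequality at the discrete level forces $\f_t(z_0)\le 0$ in the limit. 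Alternatively—and this may be cleaner for the discrete scheme—one uses Remark~\ref{conflin}/Lemma~\ref{balls}: comparison with shrinking balls gives a one-sided Lipschitz-type control on $u$ in time near a flat spot, directly yielding $\f_t(z_0)\le 0$.

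The main case is $D\f(z_0)=p_0\neq 0$. The strategy follows~\cite{CiomagaThouroude}: pass the discrete Euler--Lagrange (variational) inequality for the minimizers $T^\pm_{h_k}$ to the limit. Concretely, since $u_{h_k}\to u$ locally uniformly and $u-\f$ has a strict max at $z_0$, one finds points $z_k=(x_k,t_k)\to z_0$ where $u_{h_k}-\f$ (suitably penalized in time to localize the time slice) has a maximum; writing $t_k$ in the form $n_k h_k$, the set $F_k:=\{u_{h_k}(\cdot,t_k)\ge u_{h_k}(x_k,t_k)\}$ is, up to a null set of levels, of the form $T^\pm_{h_k}\{u_{h_k}(\cdot,t_k-h_k)\ge s_k\}$, hence a minimizer of~\eqref{NLATW} for the previous set. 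From minimality one extracts a comparison: perturbing $F_k$ by a smooth normal variation supported near $x_k$ with profile built from $\f$, the first-variation inequality gives
\[
\frac{u_{h_k}(x_k,t_k)-u_{h_k}(x_k,t_k-h_k)}{h_k}\,+\,F_f\bigl(x_k,D\f(x_k),D^2\f(x_k),F_k\bigr)\ \le\ o(1)\,,
\]
using that the first variation of $\E^f$ along inner variations is $\int\kappa_f\f\,d\H^{d-1}$ (Theorem~\ref{thcurvatura}, or rather its localized pointwise consequence~\eqref{HamilCurv}) and that $\frac1h d_E$ on $F\triangle E$ linearizes to the normal velocity. The left difference quotient converges to $\f_t(z_0)$ by the space-time uniform continuity of $u$ and the fact that, near $z_0$, the level sets of $u_{h_k}$ are trapped between level sets of $\f$ by the strict-maximum property, so that $|x_k - x_{k}'|=O(h_k)$ across the two time slices (here one uses Lemma~\ref{lemdist}, the distance-nondecreasing property, to control the motion of the relevant level set over one step). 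For the Hamiltonian term one invokes the lower semicontinuity established in the Lemma on $F_f$-semicontinuity along $C^2$-converging test functions (inequality~\eqref{Ffsci}): any Kuratowski limit of $F_k$ lies between $\{\f>c\}$ and $\{\f\ge c\}$, and combined with monotonicity iii) of $F_f$ and~\eqref{Ffsci} we get $F_f(x_0,D\f(z_0),D^2\f(z_0),\{\f(\cdot,t_0)\ge c\})\le\liminf_k F_f(x_k,\dots,F_k)$. Passing to the limit yields~\eqref{eqsubsol}.

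The hard part will be the rigorous derivation of the discrete first-variation inequality with the correct linearized velocity and the correct nonlocal curvature term, because the minimizers $T^\pm_{h_k}$ have unknown regularity: one cannot simply differentiate $\E^f$ along normal variations of $F_k$. The way around this is to compare the energy of $F_k$ not with a smooth perturbation of $F_k$ itself, but with $F_k\cap G$ and $F_k\cup G$ where $G=\{\f(\cdot,t_k)\ge c-\eta\}$ (or $\ge c+\eta$) is a smooth competitor coming from the \emph{test function}, exploiting submodularity~\eqref{submod} of $\E^f$ exactly as in the proofs of Lemmas~\ref{lemcomp} and~\ref{balls}; this transfers the variational information onto the smooth set $G$, whose curvature is computed by Theorem~\ref{thcurvatura}, and avoids ever evaluating the velocity on $\partial F_k$. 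A secondary technical point is handling the non-locality of $F_f$ under Kuratowski convergence and the distinction between $\{\f\ge c\}$ and $\{\f>c\}$ in the sub- versus supersolution definitions—but this is precisely what~\eqref{Ffsci}, \eqref{Ffscs} and the monotonicity property iii) are designed to absorb, so once the variational inequality is in place the limit passage is routine.
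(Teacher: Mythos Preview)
Your plan is correct and matches the paper's approach closely: the paper, too, localizes to a strict maximum of $u_{h_k}-\f$, compares the minimizer $\{u_{h_k}(\cdot,t_k)\ge s_\e\}$ with its intersection with a smooth superlevel of a penalized test function $\f^\eta_{h_k}(x)=\f(x,t_k)+c_k+\tfrac{\eta}{2}|x-x_k|^2$, and uses submodularity~\eqref{submod} to push the energy difference onto the smooth set---exactly your ``$F_k\cap G$, $F_k\cup G$'' idea. The degenerate case $D\f(\bar z)=0$ is handled in the paper via your second alternative (comparison with balls, Lemma~\ref{balls} and~\eqref{czero}), after first reducing, via $\psi_n$ of~\eqref{psienne}, to the situation where the maximizers stay at $x_n=\bar x$.

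Two small caveats worth flagging. First, the energy increment $\E^f(G\cup W_\e)-\E^f(G)$ is \emph{not} computed in the paper by invoking Theorem~\ref{thcurvatura}: since $W_\e$ is an irregular set of measure $\sim\e$, one cannot realize $G\cup W_\e$ as a smooth normal variation of $G$; instead the paper estimates directly the volumes of the Minkowski dilations $(G\cup W_\e)+B_s$ and erosions $(G\cup W_\e)\ominus B_s$ in terms of $\det(I\pm s\nabla\nu(x_k))$, which is where the explicit form of $\kappa^\pm_s$ enters (this is the long computation leading to~\eqref{estimpluss} and~\eqref{estimminuss}). Second, the curvature that emerges from this comparison is $\kappa_f(K^\eta_k,x_k)$ with $K^\eta_k=\{\f^\eta_{h_k}\ge \f^\eta_{h_k}(x_k)\}$, i.e.\ evaluated on the level set of the \emph{test function}, not on $F_k$ as in your displayed inequality; this is precisely what allows the limit passage via~\eqref{Ffsci} after sending first $\e\to 0$, then $\eta\to 0$, then $k\to\infty$.
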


\begin{proof} 

First it is clear, by construction, that $u(0,\cdot)=u_0$. Hence we need to show that the
equation holds for $t>0$.
{
 We only prove that it is  a subsolution,  the proof that it is a supersolution being  identical.
} 
Let $\f\in C^\infty(\R^d\times \R_+)$ and
$(\bar x, \bar t)\in \R^d\times \R_+$ be a maximum point of $u-\f$.
We may assume that 
this is a strict maximum point { and that $\f$ is coercive
}
: if not, we should first replace
(as usual) $\f$ with $\f(x,t)+\eta(|x-\bar x|^2+|t-\bar t|^2)$, derive
an inequality for this modified function, and send $\eta\to 0$,
which will give the desired inequality thanks to \eqref{Ffsci}.

By standard methods, we can then find $(x_k, t_k)\to (\bar x,\bar t)$
such that $t_k>0$ and $u_{h_k}-\f$ has a 
{
maximum 
}
at $(x_k, t_k)$. 
\smallskip

\noindent\textbf{Step 1.}
Let us first assume that $D\f(\bar x, \bar t)\neq 0$
so that in particular, for $k$ large enough, $D\f(x_k, t_k)\neq 0$. 
We have that for all $(x,t)$,
{
\begin{equation}\label{ineqk}
u_{h_k}(x,t)\ \le\ \f(x,t) \,+c_k 
\end{equation}
where $c_k:=
[u_{h_k}(x_k, t_k)-\f(x_k, t_k)]$, 
} 
with equality if $(x,t)=(x_k, t_k)$. Let $\eta>0$ and 
{ $\f^{\eta}_{h_k}
:\R^d\to \R$
}
given by
\[
\f^{\eta}_{h_k}(x)\ =\ \f(t_k,x) \,+\, c_k\,+
\, \frac{\eta}{2}|x-x_k|^2\,,
\]
 then, for all $x\in \R^d$,
\[
u_{h_k}(t_k,x)\ \le\ \f^{\eta}_{h_k}(x)
\]
with equality \textit{if and only if} $x=x_k$.
Let $\e>0$ and consider the open, nonempty set
$V_\e=\{x\,:\, u_{h_k}(t_k,x)> \f^{\eta}_{h_k}(x)-\e\}$, which has positive measure,
contains $x_k$, and converges to $\{x_k\}$ in the Hausdorff sense as
$\e\to 0$.
{
In particular, setting  $s_\e:= u_{h_k}(x_k, t_k) -  \e/2$,
we have that  for  $\e>0$ sufficiently small  $|W_\e|>0$, where
\[
W_\e \ :=\ \{x\in\R^d\,:\, u_{h_k}(t_k,x)\ge s_\e\}
\setminus \{x\in\R^d\,:\, \f^{\eta}_{h_k}(x)\ge \e+s_\e\}\ \subseteq V_\e\,,
\]
}
Now, by minimality, we have
\begin{multline*}
\E^f(\{ u_{h_k}(\cdot, t_k)\ge s_\e\})\,+\,\frac{1}{h_k}
\int_{\{ u_{h_k}(\cdot, t_k)\ge s_\e\}} d_{\{ u_{h_k}(\cdot, t_k-h_k)\ge s_\e\}}(x)\,dx
\\ \le\,
\E^f(\{ u_{h_k}(\cdot, t_k)\ge s_\e\}\cap\{\f^{\eta}_{h_k}\ge \e+s_\e\})
\\ +\,\frac{1}{h_k}
\int_{\{ u_{h_k}(\cdot, t_k)\ge s_\e\}\cap\{\f^{\eta}_{h_k}\ge \e+s_\e\}}
 d_{\{ u_{h_k}(\cdot, t_k-h_k)\ge s_\e\}}(x)\,dx\,.
\end{multline*}
Adding to both sides the term
$\E^f(\{ u_{h_k}(\cdot, t_k)\ge s_\e\}\cup\{\f^{\eta}_{h_k}\ge \e+s_\e\})$
and using \eqref{submod}, we obtain
\[
\E^f(\{\f^{\eta}_{h_k}\ge \e+s_\e\}\cup W_\e)-\E^f(\{\f^{\eta}_{h_k}\ge \e+s_\e\})
\\+\,\frac{1}{h_k}\int_{W_\e}  d_{\{ u_{h_k}(\cdot, t_k-h_k)\ge s_\e\}}(x)\,dx \ \le\ 0\,.
\]
{
Observing that by \eqref{ineqk}, $\{ u_{h_k}(\cdot, t_k-h_k)\ge s_\e\}
\subseteq  \{\f(\cdot, t_k-h_k) \ge s_\e- c_k\}$,  we also have
}
\begin{equation}\label{finalineq}
\E^f(\{\f^{\eta}_{h_k}\ge \e+s_\e\}\cup W_\e)-\E^f(\{\f^{\eta}_{h_k}\ge \e+s_\e\})
\\+\,\frac{1}{h_k}\int_{W_\e}  d_{\{\f(\cdot, t_k-h_k) \ge s_\e-c_k\}}(x)\,dx \ \le\ 0\,.
\end{equation}

Now notice that for $z\in W_\e$ we have
\begin{equation}\label{E1}
s_\e\ \le\ \f(z, t_k) + c_k + \frac{\eta}{2}|z-x_k|^2  \ <\ \e+s_\e.
\end{equation}
{
In particular,  
\begin{equation}\label{inparticular}
W_\e \subseteq B_{C\sqrt{\e}}(x_k).
\end{equation} 
Moreover, 
}
\begin{equation}\label{E2}
\f(z, t_k-h_k) = \f(z, t_k)-h_k\partial_t \f(z, t_k) +
h_k^2 \int_0^1(1-s) \partial_{tt}^2 \f(z, t_k-sh_k) \,ds\,.
\end{equation}
If $y$ is the point closest to $z$ with $\f(y, t_k-h_k)=s_\e-c_k$,
so that $|y-z|=|d_{\{\f (\cdot, t_k-h_k)\ge s_\e-c_k\}}(z)|$, then
\begin{multline}\label{E3}
\f(z, t_k-h_k)\,=\, \f(y, t_k-h_k)+(z-y)\cdot D\f(y, t_k-h_k)
\\+
\int_0^1(1-s) (D^2\f(y+s(z-y), t_k-h_k)(z-y))\cdot(z-y)\,ds
\\=\,s_\e-c_k  - d_{\{\f (\cdot, t_k-h_k)\ge s_\e-c_k\}}(z)| D\f(y, t_k-h_k)|
\\+
\int_0^1(1-s) (D^2\f(y+s(z-y), t_k-h_k)(z-y))\cdot(z-y)\,ds\,.
\end{multline}
Combining \eqref{E1}, \eqref{E2}, and \eqref{E3}, we deduce 
\begin{multline*}
d_{\{\f(\cdot, t_k-h_k)\ge s_\e-c_k\}}(z)| D\f(y, t_k-h_k)|
\\ \ge\ -\e  + h_k\partial_t \f(z, t_k) \,-\,
h_k^2 \int_0^1(1-s) \partial_{tt}^2 \f(z, t_k-sh_k) \,ds\\ +\,
\int_0^1(1-s) (D^2\f(y+s(z-y), t_k-h_k)(z-y))\cdot(z-y)\,ds\,.
\end{multline*}
{
Note that, in view of \eqref{E1},  $|\varphi( z, t_k)-\varphi( y, t_k)|\leq \e+Ch_k=O(h_k)$,
} 
provided that
$\e<< h_k$ are small enough.
In turn, as $|D\f(x_k, t_k)|\neq 0$, we  have 
 $|z-y|=O(h_k)$ and,
 {
 using also \eqref{inparticular}, 
 } 
 we deduce
\begin{multline}
\label{ineqdist}
\frac{1}{h_k}d_{\{\f(\cdot, t_k-h_k)\ge s_\e-c_k\}}(z) \, \ge\,
\frac{\partial_t \f(z, t_k) -\frac{\e}{h_k}  +  O(h_k)}{| D\f(y, t_k-h_k)|}
\\=\,\frac{\partial_t \f(x_k, t_k) +O(\sqrt{\e}) -\frac{\e}{h_k}  +  O(h_k)}
{| D\f(x_k, t_k)| + O(h_k)}\,.
\end{multline}

We now focus on the term
\[
\E^f(\{\f^{\eta}_{h_k}\ge \e+s_\e\}\cup W_\e)-\E^f(\{\f^{\eta}_{h_k}\ge \e+s_\e\})
\]
of inequality~\eqref{finalineq}. 
{
This is the sum of the two following
expressions, which we will estimate separately:
}
\begin{equation}\label{pluss}
\int_0^\delta-f'(s)\big(|(\{\f^{\eta}_{h_k}\ge \e+s_\e\}\cup W_\e)+B_s|
-|\{\f^{\eta}_{h_k}\ge \e+s_\e\}+B_s|\big)\,ds\,,
\end{equation}
\begin{equation}\label{minuss}
\int_0^\delta -f'(s)\big(|\{\f^{\eta}_{h_k}\ge \e+s_\e\}\ominus B_s|
-|(\{\f^{\eta}_{h_k}\ge \e+s_\e\}\cup W_\e)\ominus B_s|\big)\,ds\,,
\end{equation}
where $A\ominus B$ denotes the set $\{ x\,:\, x+B\subseteq A\}$.
We recall that by assumption, $f'(s)= 0$  for
$s\le \underline{\delta}$, so that the integrals are in fact
on $[\underline{\delta},\delta]$.

Let us first consider~\eqref{pluss}.
For any $x$ in a neighborhood of $x_k$, we have 
$x\in \partial \{\f^{\eta}_{h_k}\ge \f^{\eta}_{h_k}(x)\}$ and we can define
$s^*(x)\in (0,\delta]$ such that for $s\in (0,s^*(x)]$,
$\dist(x+ s\nu(x),\{\f^{\eta}_{h_k}\ge \f^{\eta}_{h_k}(x)\})= s$ and
for $s\in (s^*(x),\delta]$ (possibly empty), 
$\dist(x+ s\nu(x),\{\f^{\eta}_{h_k}\ge \f^{\eta}_{h_k}(x)\})<s$.
Here $\nu(x)= -D\f^{\eta}_{h_k}(x)/|D\f^{\eta}_{h_k}(x)|$, and it is important
to observe that thanks to the regularity of $\f^{\eta}_{h_k}$, $s^*(x)$
is continuous near $x_k$.

If $\e$ is small, for $x\in \partial\{\f^{\eta}_{h_k}\ge \e+s_\e\}$, there exists
a minimal $\overline{h}^\e(x)\ge 0$, with $\overline{h}^\e(x)\le C\sqrt{\e}$,   such that
$W_\e\cap\{ x+t\nu(x), t\in [0,\delta]\}\subseteq
\{ x+t\nu(x), t\in [0,\overline{h}^\e(x)]\}$. 
Clearly, for $s\geq \underline\delta$ and $\e$ small enough, 
\begin{multline*}
\left(\{\f^{\eta}_{h_k}\ge \e+s_\e\}\cup W_\e\,+\,B_s\right)
\setminus \left(\{\f^{\eta}_{h_k}\ge \e+s_\e\}\,+\,B_s\right)
 \\ \supseteq \ 
\big\{ x+t\nu(x)\,:\, x\in \partial \{\f^{\eta}_{h_k}\ge \e+s_\e\}\,,
s\le t\le \min\{s^*(x),s+\overline{h}^\e(x)\}\big\}
\end{multline*}
The volume of this latter set is
\[
\int_{\partial\{\f^{\eta}_{h_k}\ge \e+s_\e\}} 
\int_{I(x)} \det(I+t\nabla \nu(x))\,dt \,d\H^{d-1}(x)
\]
where $I(x)$ is the interval (possibly empty)
$\{t\,:\,s\le t\le \min\{s^*(x),s+\overline{h}^\e(x)\}$.
{
Fix $\sigma>0$. A simple continuity argument yields that, 
for  $\e$  sufficiently small, 
if $\underline{\delta}\le s \le s^*(x_k)-\sigma$,
then $s+\overline{h}^\e(x)\le s^*(x)$.
} 
We deduce that
\begin{multline}\label{longandterrible}
|(\{\f^{\eta}_{h_k}\ge \e+s_\e\}\cup W_\e\,+\,B_s) \setminus (\{\f^{\eta}_{h_k}\ge \e+s_\e\}\,+\,B_s)|
\\\ge\, \int_{\{ \f^{\eta}_{h_k}=\e+s_\e ,\ \overline{h}^\e>0\}}
\int_s^{s+\overline{h}^\e(x)} \det(I+t\nabla \nu(x))\,dt \,d\H^{d-1}(x)
\\
\,=\, \int_{\{ \f^{\eta}_{h_k}=\e+s_\e ,\ \overline{h}^\e>0\}} \det(I+s\nabla \nu(x))
\int_0^{\overline{h}^\e(x)} \frac{\det(I+(s+t)\nabla \nu(x))}{\det(I+s\nabla \nu(x))}
\,dt\,d\H^{d-1}(x)
\\
\,\ge \, (\det(I+s\nabla \nu(x_k))+O(\sqrt{\e}))
(1+O(\sqrt{\e}))|W_\e|\,,
\end{multline}
where we have used that
\[
|W_\e| \ \le\ \int_{\{ \f^{\eta}_{h_k}=\e+s_\e ,\ \overline{h}^\e>0\}} 
\int_0^{\overline{h}^\e(x)} \det(I+t\nabla \nu(x))\,dt \,d\H^{d-1}(x)\,.
\]
Finally, integrating over $s\in [\underline{\delta},s^*(x^k)-\sigma]$
we deduce that \eqref{pluss} is larger than 
\begin{multline}\label{estimpluss}
(1+O(\sqrt{\e}))|W_\e|
\int_{\underline{\delta}}^{s^*(x^k)-\sigma} -(2sf'(s))(\kappa^+_s(K^\eta_k,x_k)+O(\sqrt{\e}))\,ds
\\ =\ 
|W_\e|(\kappa^+_f(K^\eta_k,x_k) + O(\sigma))\,,
\end{multline}
where the ``curvature'' (defined in~\eqref{kappaprho} and~\eqref{kappapmf})
is relative to the set $K^\eta_k=\{\f^{\eta}_{h_k}\ge \f^{\eta}_{h_k}(x_k)\}$.
Here we have used the fact that $\sqrt{\e}<\sigma$, so that
$O(\sqrt{\e})=O(\sigma)$.
\smallskip

Now let us estimate the negative quantity \eqref{minuss}. It is very
similar, but not equivalent. 

We now need to introduce the function $\underline{h}^\e$, defined for
$x\in \partial\{\f^{\eta}_{h_k}\ge \e+s_\e\}$, which is the largest real number
(which can be nonzero only in a neighborhood of $x_k$) such that
$x\,+\, s \nu(x)\in W_\e$ for all $s\in (0,\underline{h}^\e(x))$.
For $x$ in a neighborhood of $x_k$,
we also introduce $s_*(x)\in (0,\delta]$, such that
$\dist(x-s\nu(x),\partial \{ \f^{\eta}_{h_k}\ge \f^{\eta}_{h_k}(x)\} ) = s$ for $s\le s_*(x)$
and
$\dist(x-s\nu(x),\partial \{ \f^{\eta}_{h_k}\ge \f^{\eta}_{h_k}(x)\} ) < s$ for
$s_*(x)<s\le \delta$. Just as $s^*$, this quantity is continuous
with respect to $x$, thanks to the regularity of $\f^{\eta}_{h_k}$.
If $x\in \partial\{\f^{\eta}_{h_k}\ge \e+s_\e\}$ and $s\le s_*(x)$,
one has that
\begin{multline*}
\bigcup_{\{ \f^{\eta}_{h_k}=\e+s_\e ,\ \underline{h}^\e>0\}}
\big\{ x+t\nu(x)\,:\, -s\le t\le -s+\underline{h}^\e(x)\big\}
\\ \supseteq\ 
\big((\{\f^{\eta}_{h_k}\ge \e+s_\e\}\cup W_\e)\ominus B_s\big)\setminus
\big(\{\f^{\eta}_{h_k}\ge \e+s_\e\}\ominus B_s\big)\,,
\end{multline*}
at least when $W_\e$ is small enough with respect to the scale $s$ (which
can be ensured, as we need to consider only $s\ge\underline{\delta}$).
It follows that
\begin{multline}\label{longandmoreterrible}
\left|\big((\{\f^{\eta}_{h_k}\ge \e+s_\e\}\cup W_\e)\ominus B_s\big)\setminus
\big(\{\f^{\eta}_{h_k}\ge \e+s_\e\}\ominus B_s\big)\right|
\\\le\, \int_{\{ \f^{\eta}_{h_k}=\e+s_\e ,\ \underline{h}^\e>0\}}
\int_{-s}^{-s+\underline{h}^\e(x)} \det(I+t\nabla \nu(x))\,dt \,d\H^{d-1}(x)
\\
\,=\, \int_{\{ \f^{\eta}_{h_k}=\e+s_\e ,\ 
{ 
\underline{h}^\e
}
>0\}} \det(I-s\nabla \nu(x))
\int_0^{\underline{h}^\e(x)} \frac{\det(I-(s-t)\nabla \nu(x))}{\det(I-s\nabla \nu(x))}
\,dt\,d\H^{d-1}(x)
\\
\,\le \, (\det(I-s\nabla \nu(x_k))+O(\sqrt{\e}))
(1+O(\sqrt{\e}))|W_\e|\,,
\end{multline}
where we have used, this time, that
\[
|W_\e| \ \ge\ \int_{\{ \f^{\eta}_{h_k}=\e+s_\e ,\ \underline{h}^\e>0\}} 
\int_0^{\underline{h}^\e(x)} \det(I+t\nabla \nu(x))\,dt \,d\H^{d-1}(x)\,.
\]
{
Fix $\sigma>0$ as before. 
If $s\ge s_*(x_k)+\sigma$,
}
then $s\ge s_*(x)+\sigma/2$ near $x_k$ and we see that 
\begin{equation}\label{shortbutneverthelessterrible}
\left|\big((\{\f^{\eta}_{h_k}\ge \e+s_\e\}\cup W_\e)\ominus B_s\big)\setminus
\big(\{\f^{\eta}_{h_k}\ge \e+s_\e\}\ominus B_s\big)\right| = 0\,,
\end{equation}
provided that
$\e$ is small enough.
Integrating \eqref{longandmoreterrible}-\eqref{shortbutneverthelessterrible}
over $s\in [\underline{\delta},\delta]$
we deduce that \eqref{minuss} is larger than
\begin{multline}\label{estimminuss}
(1+O(\sqrt{\e}))|W_\e|
(\int_{\underline{\delta}}^{\delta} -(2sf'(s))(\kappa^-_s(K^\eta_k,x_k)+O(\sqrt{\e}))\,ds
+O(\sigma))
\\ =\ 
|W_\e|(\kappa^-_f(K^\eta_k,x_k) + O(\sigma))\,.
\end{multline}
It therefore follows from~\eqref{estimpluss} and \eqref{estimminuss} that
\[
\E^f(\{\f^{\eta}_{h_k}\ge \e+s_\e\}\cup W_\e)-\E^f(\{\f^{\eta}_{h_k}\ge \e+s_\e\})
\ \ge\ |W_\e|(\kappa_f(K^\eta_k,x_k) + O(\sigma))\,.
\]
Thanks to \eqref{ineqdist}, we deduce, after dividing \eqref{finalineq}
by $|W_\e|$ and sending $\e\to 0$ and $\sigma\to 0$, that
\begin{equation}\label{subsol1}
\frac{\partial_t \f (x_k, t_k)}{| D \f(x_k, t_k)|}
\ +\ \kappa_f(K^\eta_k,x_k)\ +\ O(h_k)\ \le\ 0
\end{equation}
{
where 
$O(h_k)$ depends only on the
regularity of $\f$.
}
We may therefore send $\eta$ to zero to
deduce that \eqref{subsol1} also holds also with
$K^\eta_k$ replaced by  $K_k=\{ \f(\cdot, t_k) \ge \f(x_k, t_k)\}$,
thanks to~\eqref{Ffsci} and~\eqref{HamilCurv}.
Letting now $k\to\infty$,  using~\eqref{Ffsci} 
{
and the monotonicity property iii), 
} we deduce
\begin{equation}\label{SUBSOL}
\partial_t \f(\bar x, \bar t)\ \,+\, 
F_f(\bar x, D\f(\bar x, \bar t),D^2\f(\bar x, \bar t),\{ \f(\bar t,\cdot)
\ge \f(\bar x, \bar t)\})\ \le\ 0\,,
\end{equation}
that is, $u$ is a viscosity subsolution at $(\bar t, \bar x)$.
\smallskip

\noindent\textbf{Step 2.}
Now we consider the case $D\f(\bar z)= 0$ and we  show that  $\f_t(\bar z) \le 0$. 
Let $\psi_n$ be defined as in \eqref{psienne}, with $T$ replaced by $\bar t$, and let 
$z_n=(x_n, t_n)$ be a sequence of 
{ 
maximizers 
}
of $u-\psi_n$, such that 
$x_n\to \bar x$ and $t_n\to \bar t^-$. If $x_n\neq \bar x$ for a (not relabeled) subsequence, then 
$D\psi_n(x_n, t_n)\neq 0$ and \eqref{SUBSOL} holds for $\psi_n$ at $z_n$. Passing to the limit and using the properties of $f$, we deduce that  $\f_t(\bar z) \le 0$ (see \eqref{boh} for
the details). 

We now assume that $z_n=(\bar x, t_n)$  for all $n$ sufficiently large.
Set $h_n:=\bar t-t_n$ and 
$$
r_n:=\sqrt[d]{\frac{2h_n}{c_0}}\,,
$$
where $c_0$ is the constant in \eqref{czero}. Note now that by \eqref{IS} and 
\eqref{cippi}, the function $f$ appearing in the definition of $\psi_n$ is of the form
$f(r)=g(r)r^d$, for a suitable function $g$ such that $g(r)\to 0^+$ as $r\to 0^+$.
It easily follows that
{
\begin{align*}
B(\bar x, r_n)& \subset \left\{ \psi_n(\cdot,t_n) \le \psi_n(\bar x, t_n) + f(r_n) \right\}\\
&= \left\{ \psi_n(\cdot,t_n) \le \psi_n(\bar x, t_n) + g(r_n)\frac{2h_n}{c_0} \right\}\\
&\subset
 \left\{ u(\cdot,t_n) \le u(\bar x, t_n) + g(r_n)\frac{2h_n}{c_0} \right\}\,,
\end{align*}
}
Note that the last inclusion follows from  the maximality 
of $u-\psi_n$ at $z_n$ and the fact that $u(z_n)=\psi_n(z_n)$.
By \eqref{czero}, the extinction time $T^*(r_n)$ of the ball $B(\bar x, r_n)$ under the non-local evolution satisfies $T^*(r_n)\geq c_0 r^d=2h_n$. Hence, by comparison 
(see Remark~\ref{conflin}), we deduce that
$$
\bar x\in \left\{ u(\cdot,\bar t) \le u(\bar x, t_n) + g(r_n)\frac{2h_n}{c_0} \right\}\,.
$$
Thus, using also the maximality of $u-\f$ at $\bar z$, 
$$
\frac{\f(\bar x, t_n) - \f(\bar z)}{-h_n}\leq 
\frac{u(\bar x, t_n) - u(\bar x, \bar t)}{-h_n}\leq g(r_n)\frac{2}{c_0}\,.
$$
Passing to the limit, we conclude that $\f_t(\bar z)\leq 0$.

\end{proof}








\section{Algorithm and numerical examples}\label{secnumer}
\begin{figure}[htb]
\centerline{
\includegraphics[height=5cm]{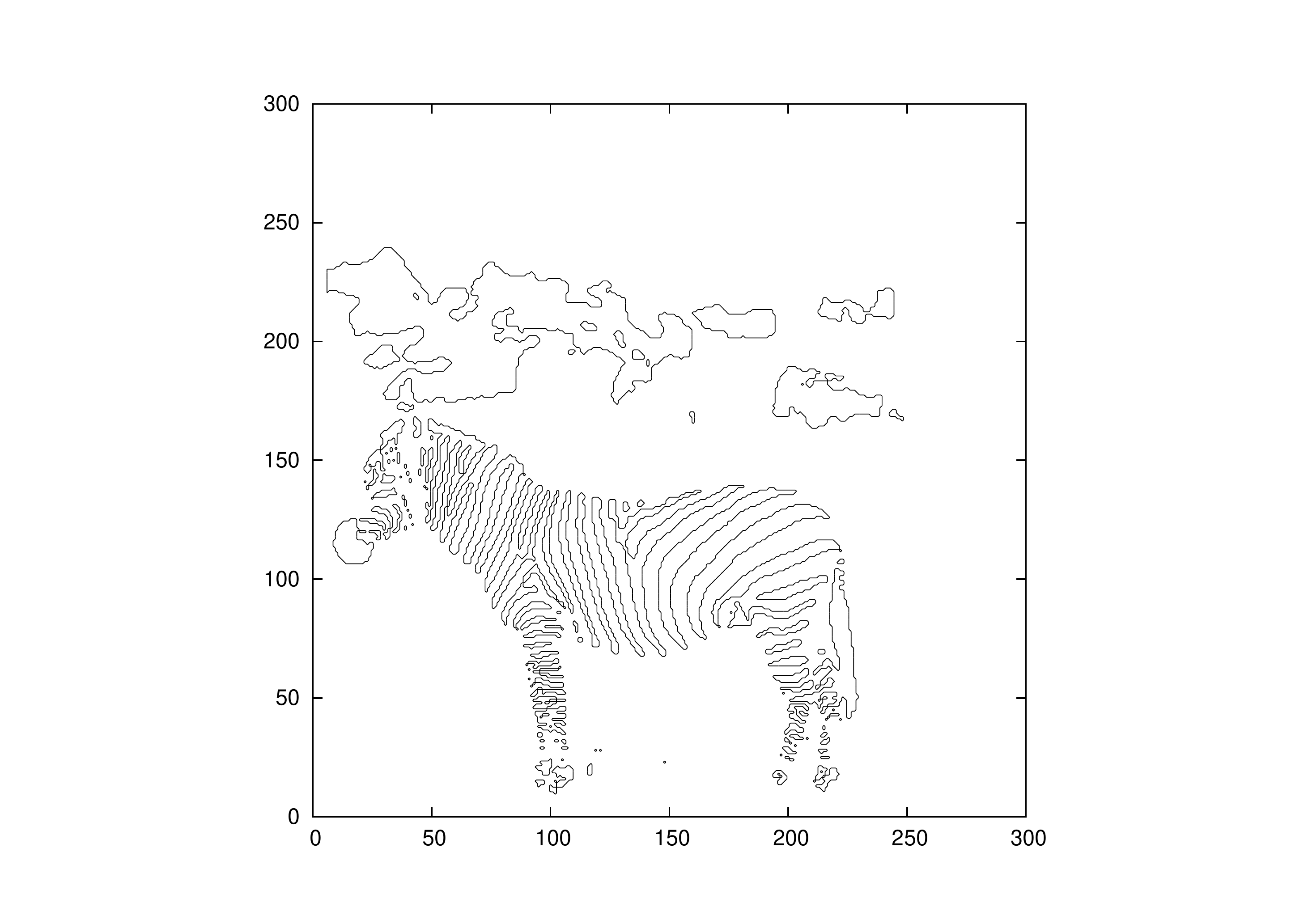}\hspace{-2.9cm}
\ \includegraphics[height=5cm]{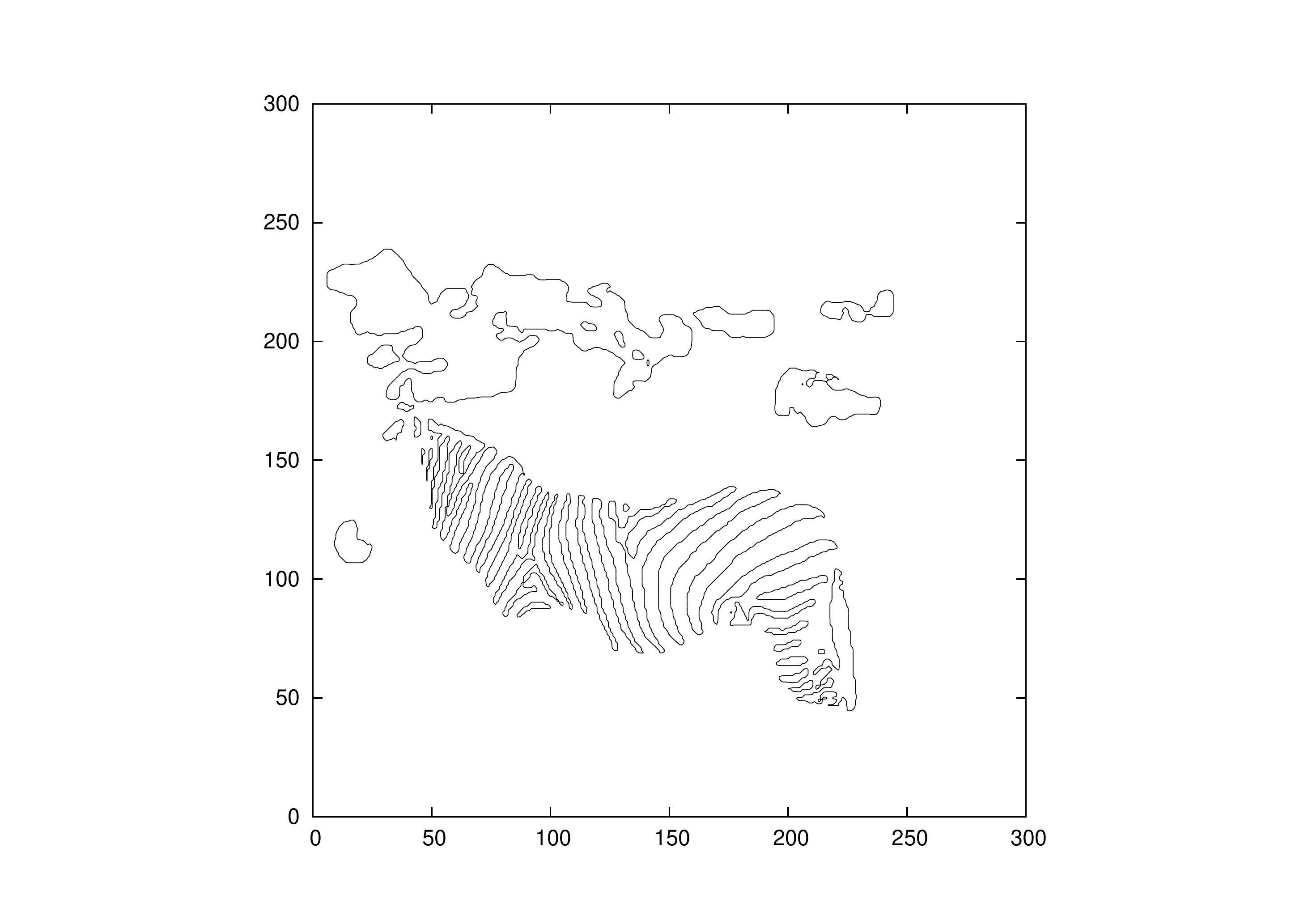}\hspace{-2.9cm}
\ \includegraphics[height=5cm]{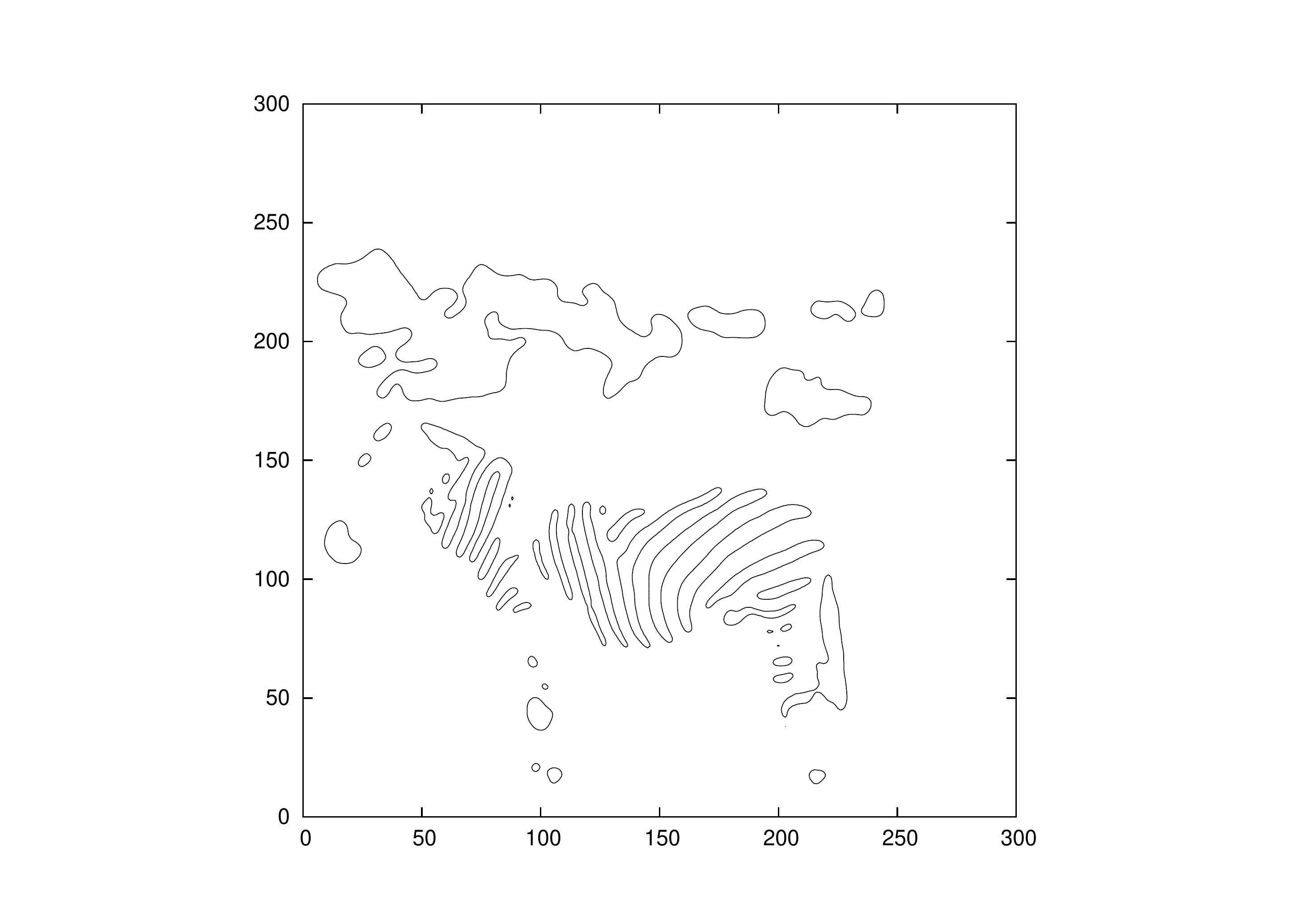}
}\caption{A zebra and its smoothing (left: starting image, center: the non-local motion,
right: the standard curvature flow), at a small time.}\label{figzebra1}
\end{figure}

\subsection{A numerical implementation of the time-discrete scheme}

We show in this section an example of evolution with the motion
studied in this paper, in dimension two.
Actually, the implementation is not straightforward
and only an approximate motion is computed, on a discrete rectangular
grid.
The approach we follow is described in~\cite{CDarbon}. It consists
in minimizing, given a discretization of the signed distance function
$d^{E^{n-1}}$ to the boundary of the set $E^{n-1}$ (negative inside,
positive outside), the energy
\begin{equation}\label{discATW}
\min_{u} J(u) \ +\ \frac{1}{2h} \|u-d^{E^{n-1}}\|^2
\end{equation}
and define $d^{E^n}$ as the signed distance function to
$\{u\le 0\}$, computed as precisely as possible using a
Fast-Marching algorithm~\cite{Tsitsiklis,SethianFM}. Here,
$u$, $d^{E^{n-1}}$ are defined on the discrete points $\{ (i,j)\,:\,
0\le i\le N-1\,, 0\le j\le M-1\}$, and the term
\[
 \|u-d^{E^{n-1}}\|^2\ =\ \sum_{i,j} (u_{i,j}-d^{E^{n-1}}_{i,j})^2
\]
is the Euclidean norm. A spatial discretization term can
be introduced in an obvious way. It turns out that if $J$ is a
correct approximation of the functional~\eqref{defminku}, then
the algorithm is an approximation of the time-discrete scheme~\eqref{NLATW}
studied in Section~\ref{secgeometric}. In this case, the iterations
should be an approximation of the motion driven by the energy.

The discretization of the ``total variation'' $J(u)$ is more complicated.
Actually, the simplest here is to approximate~\eqref{defminku}
rather than~\eqref{econeffu}. We fix $\rho>0$, Let $B$ be the
discrete ball $\{ (i,j)\in\Z^2\,:\, i^2+j^2\le \rho\}$, and
let
\[
J(u)\ =\ \frac{1}{2\rho} \sum_{i,j} \osc_{(i,j)+B} (u)
\]
where the oscillation (here simply the max minus the min)
is computed on the
finite sets $((i,j)+B)\cap [0,N-1]\times[0,M-1]$.
\begin{figure}[htb]
\centerline{
\includegraphics[height=5cm]{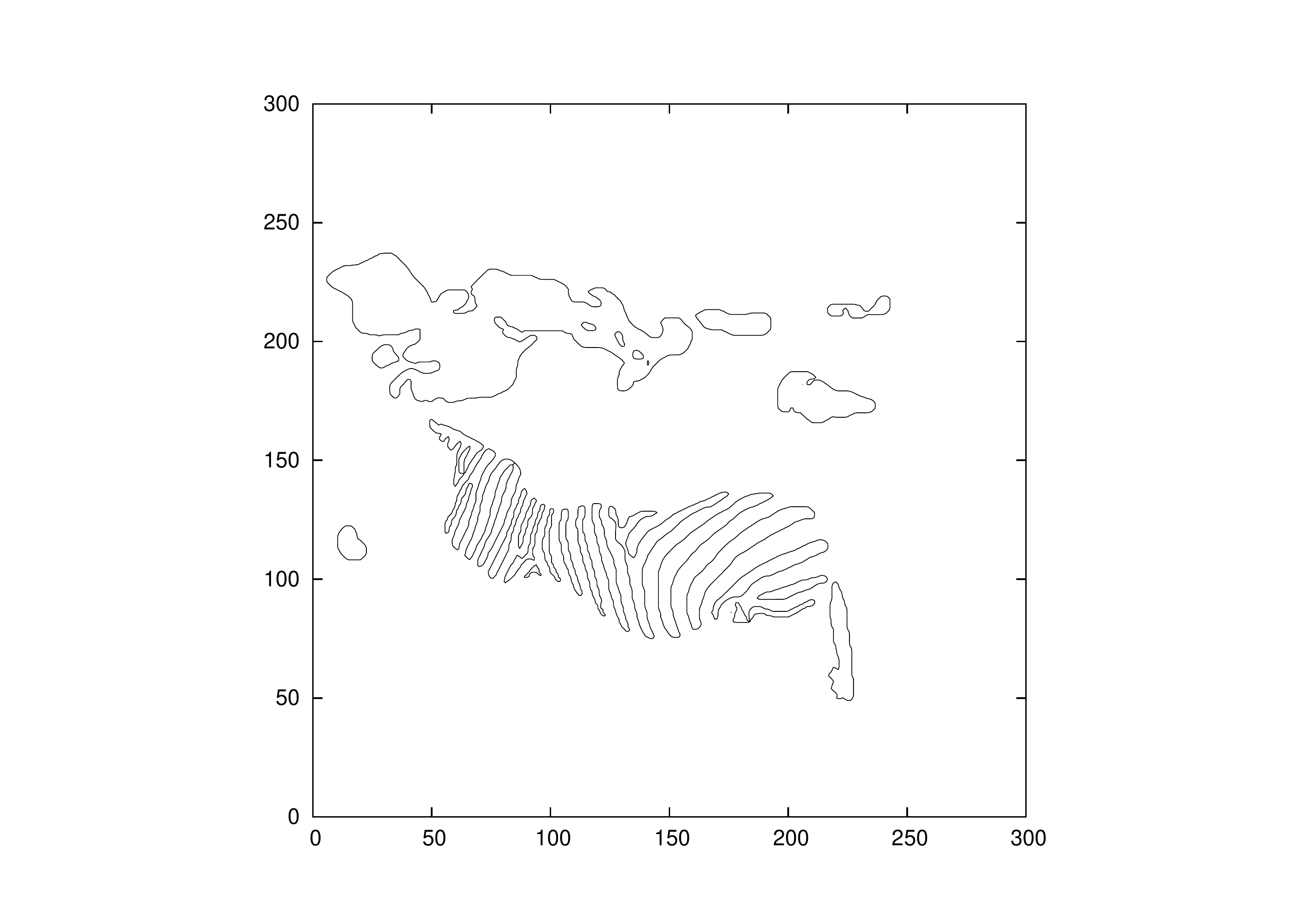}\hspace{-2.5cm}
\ \includegraphics[height=5cm]{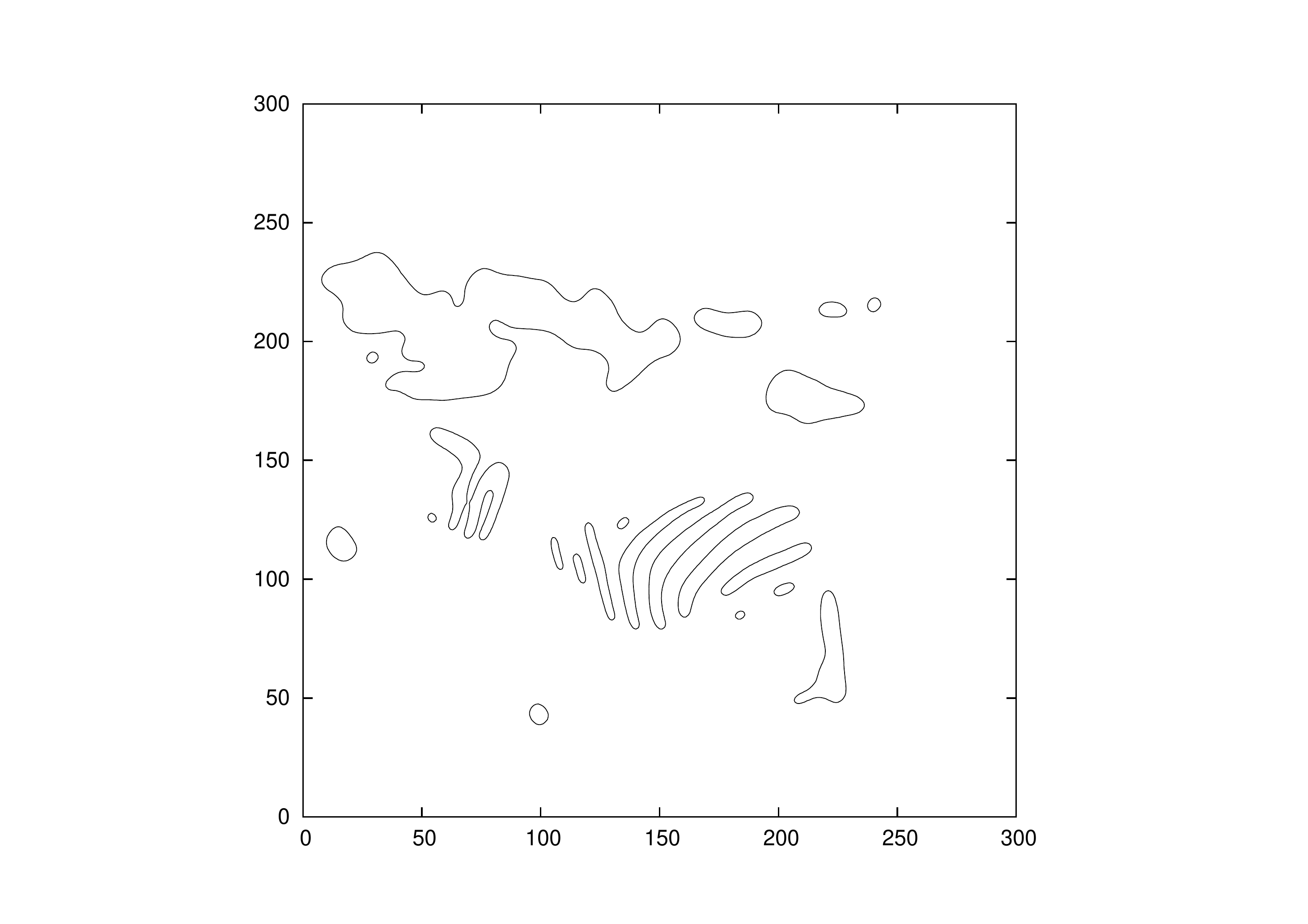}
}
\centerline{
\includegraphics[height=5cm]{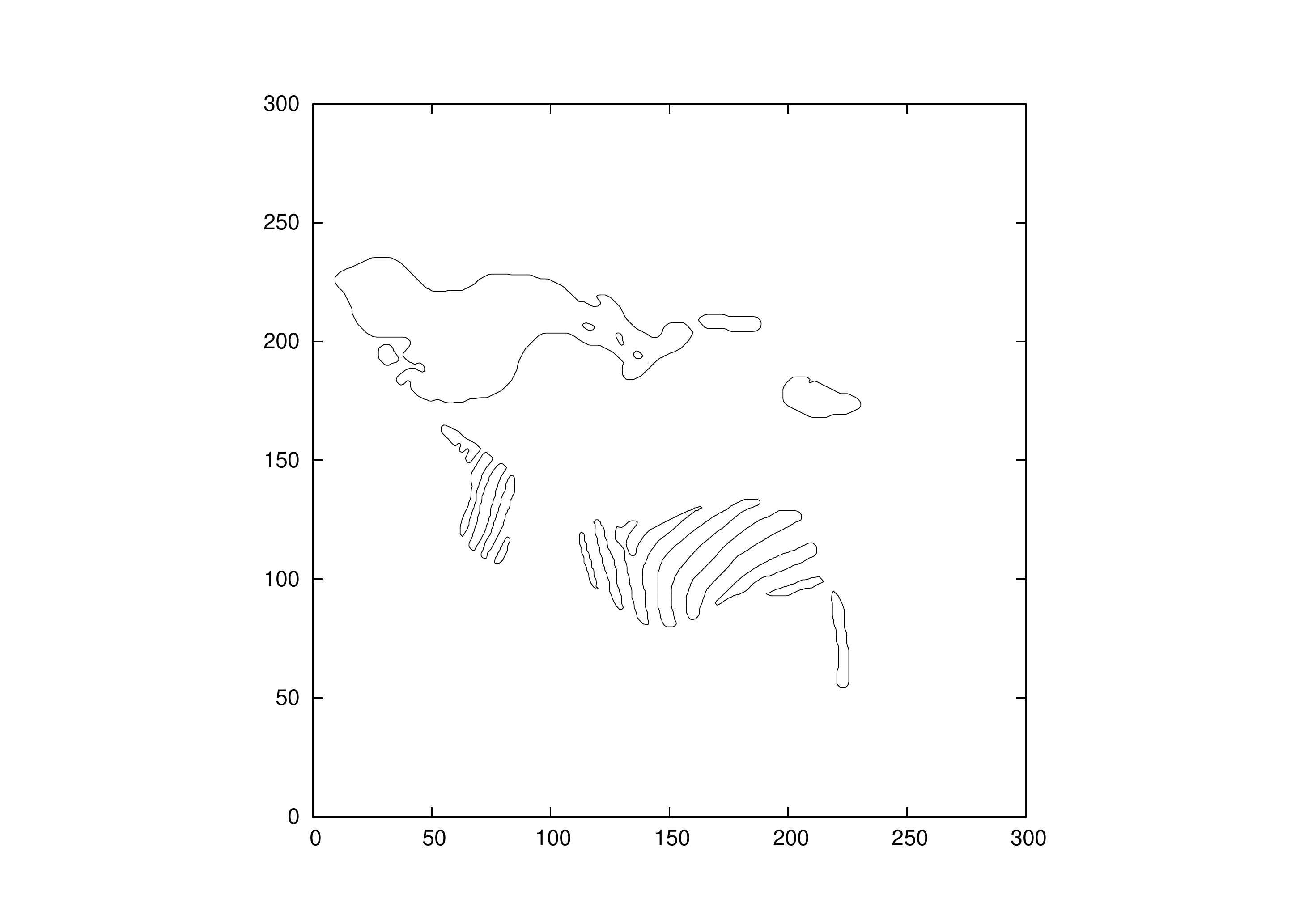}\hspace{-2.5cm}
\ \includegraphics[height=5cm]{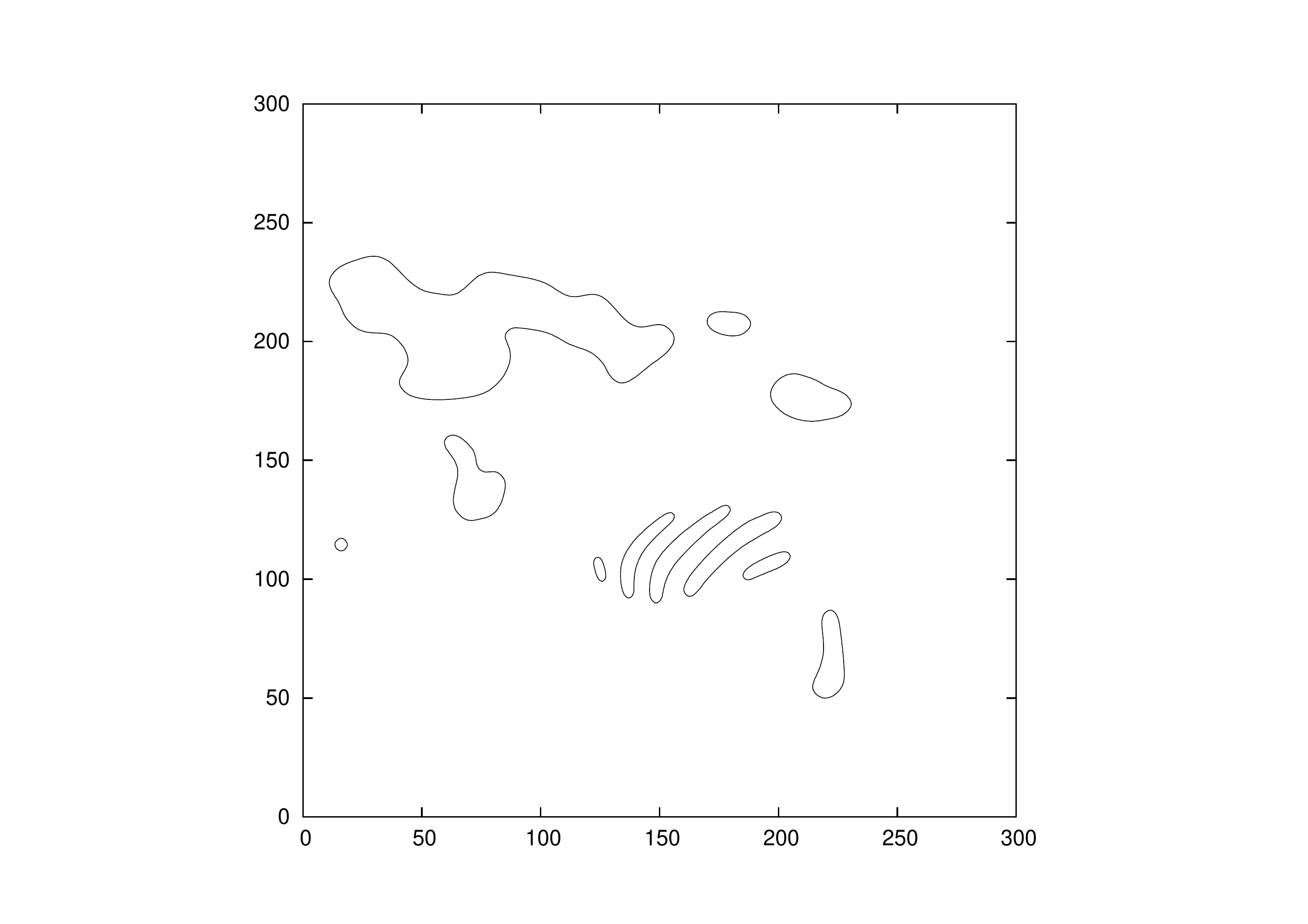}
}\caption{and at later times (left: the non-local motion,
right: the standard curvature flow).}\label{figzebra2}
\end{figure}

It turns out that for this particular energy, there
is an approach, based on a graph representation and the maxflow/mincut
duality, for minimizing binary problems such as
\[
\min_{u_{i,j}\in\{0,1\}} J(u)\,+\,\sum_{i,j} f_{i,j} u_{i,j}
\]
(given any real-valued matrix $(f_{i,j})_{0\le i <N,\,0\le j<M}$), and
an algorithm for minimizing~\eqref{discATW} is easily derived.
See~\cite{CDarbon} for details and in particular~\cite[Appendix~B]{CDarbon}
for how this particular $J$ can be implemented.

\subsection{Examples: two ways to shrink a Zebra}

Figures~\ref{figzebra1}, \ref{figzebra2} and~\ref{figzebra3}
show the motion applied to an initial set of curves with a lot
of oscillations. As expected, the standard curvature motion shrinks the
small scale objects much faster than the one based on the oscillation,
in particular the stripes are preserved longer by the non-local flow.
Notice that it is very difficult to estimate the exact corresponding
times for the two flows, moreover, the numerical imprecision may
provoke sometimes the ``fusion'' of the stripes in the classical curvature
flow (wich is computed also using~\eqref{discATW},
but now $J$ is a discretization of the standard total variation).
\begin{figure}[htb]
\centerline{
\includegraphics[height=5cm]{z20}\hspace{-2.5cm}
\ \includegraphics[height=5cm]{zcm200}
}
\centerline{
\includegraphics[height=5cm]{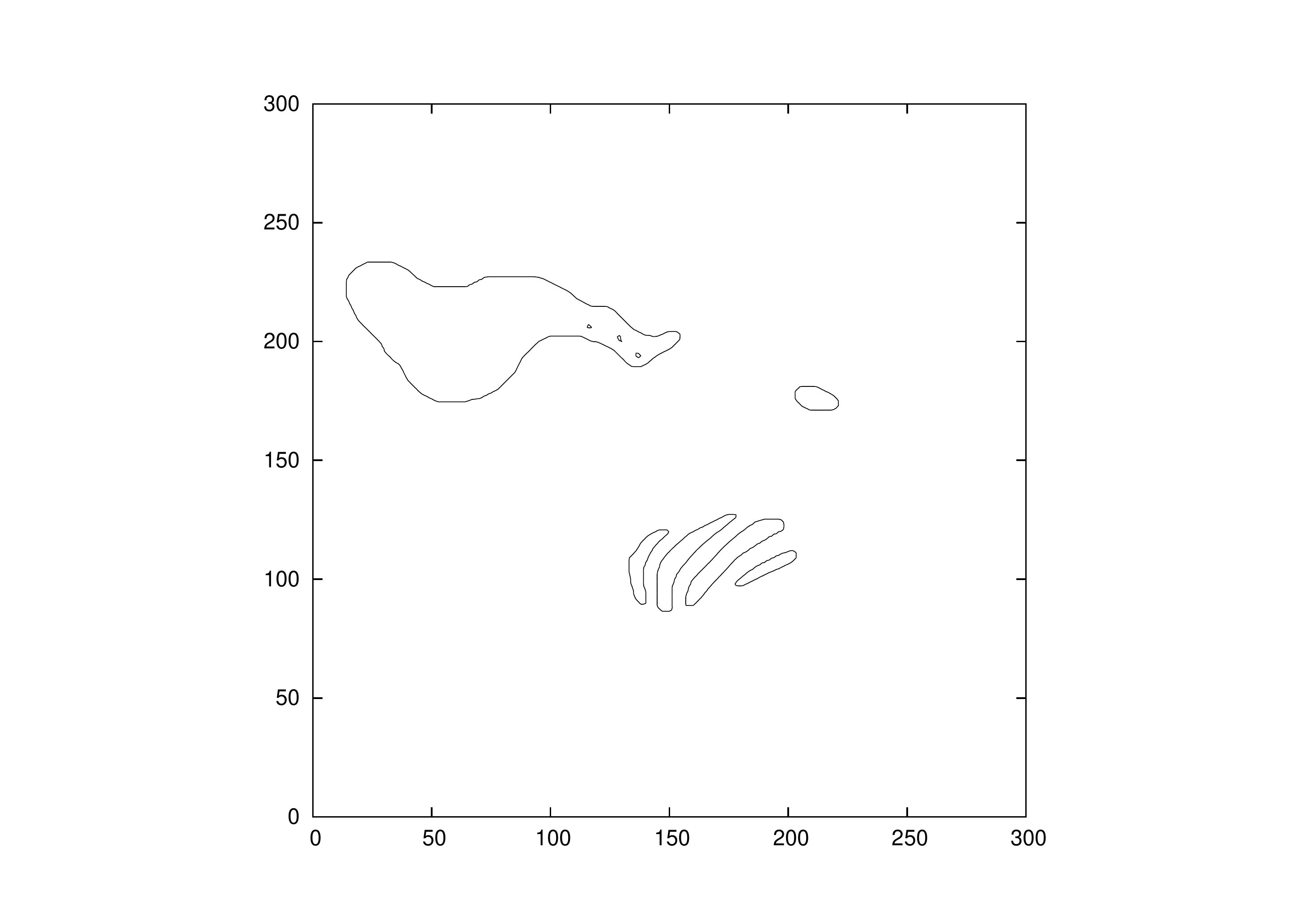}\hspace{-2.5cm}
\ \includegraphics[height=5cm]{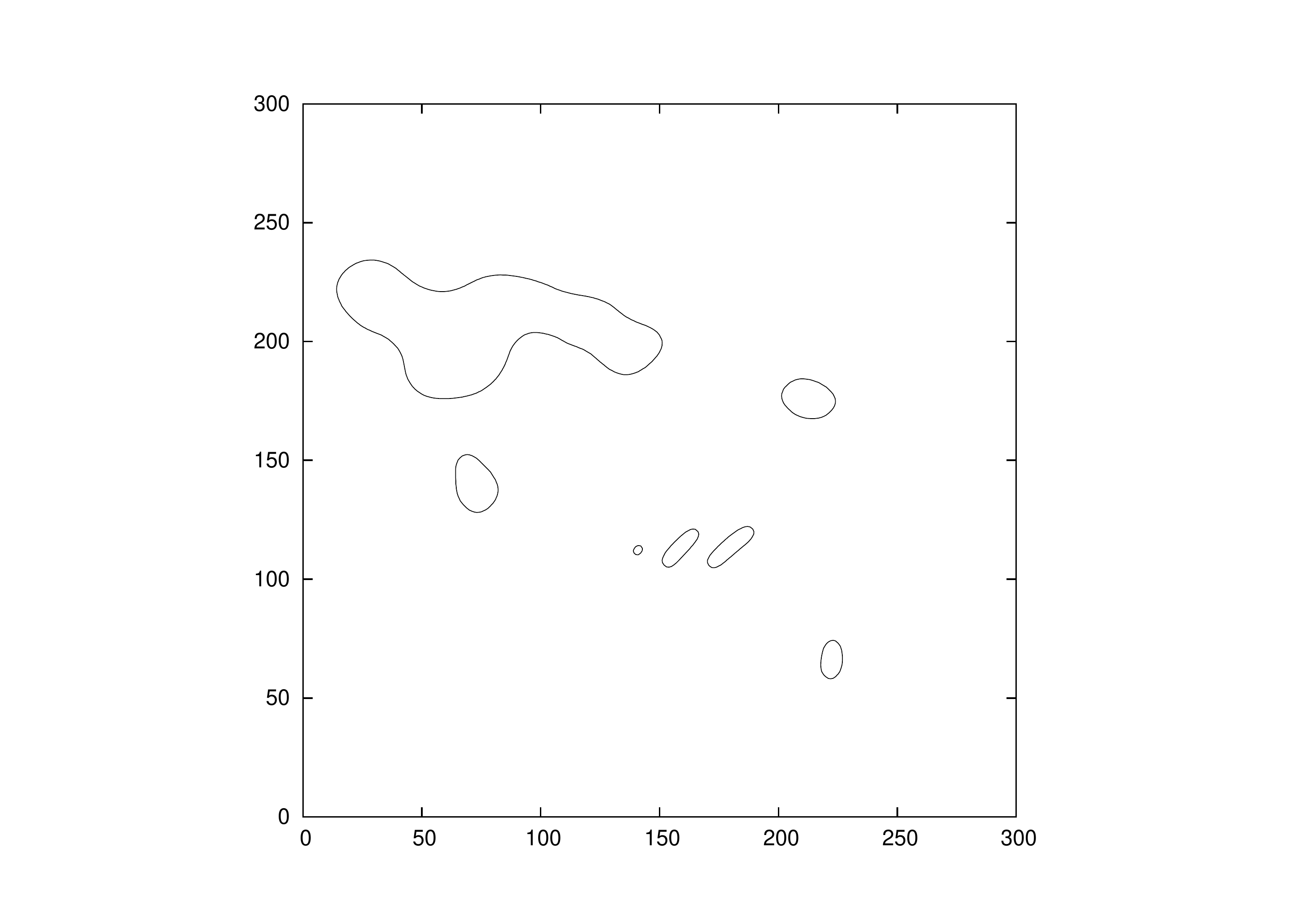}
}\caption{and later...}\label{figzebra3}
\end{figure}

\section*{Acknowledgements} The first author is supported by
the CNRS. Part of this work was done during a  visit at the University
of Parma, funded by GNAMPA and the ERC grant 207573 ``Vectorial Problems".
We wish to thank R.~Monneau, D.~Slep\v{c}ev for helpful
comments. 
We thank the ANR for not having prevented us from doing this
research.

\bibliography{MKCMP}
\end{document}